\def\Xnpq{\protect\operatorname{X}(n,p\times q)}
\def\UX{\protect\operatorname{UX}}
\def\UXnpq{\protect\operatorname{UX}(n,p\times q)}
\def\C{\protect\operatorname{C}}
\def\UC{\protect\operatorname{UC}}
\def\puz{\protect\operatorname{Puz}}
\def\cell{\protect\operatorname{cell}(n,2)}
\def\ucell{\protect\operatorname{ucell}(n,2)}
\def\barra{\,\raisebox{-1.8mm}{\rule{.2mm}{5mm}}\,}
\def\UF{\protect\operatorname{UConf}}
\def\DF{\protect\operatorname{DConf}}
\def\UDF{\protect\operatorname{UDConf}}
\def\zcl{\protect\operatorname{zcl}}
\newtheorem{proposition}{Proposition}[section]
\newtheorem{corollary}[proposition]{Corollary}
\newtheorem{definition}[proposition]{Definition}
\newtheorem{theorem}[proposition]{Theorem}
\newtheorem{remark}[proposition]{Remark}
\newtheorem{example}[proposition]{Example}
\newtheorem{lemma}[proposition]{Lemma}
\newtheorem{examples}[proposition]{Examples}
\newtheorem{notation}[proposition]{Notation}
\begin{document}

\title{A combinatorial genesis of the right-angled relations in \\ Artin's classical braid groups}

\author{Omar Alvarado-Gardu\~{n}o$^*$, Jes\'us Gonz\'alez\footnote{Cinvestav, Departamento de Matem\'aticas, M\'exico City 07000, M\'exico, {\tt oalvarado@math.cinvestav.mx} and 
{\tt jesus.glz-espino@cinvestav.mx}} \ and Matthew Kahle\footnote{Department of Mathematics, 
The Ohio State University, Columbus, OH 43210-1174, USA, {\tt mkahle@math.osu.edu}}}

\date{}

\maketitle

\begin{abstract}
The configuration space $\UC(n,p\times q)$ of $n$ unlabelled non-overlapping unit squares in a $p\times q$ rectangle is known to recover the homotopy type of the classical configuration space of $n$ unlabelled points in the plane, provided $\min\{p,q\}\geq n$. Thus the fundamental group $B_n(p\times q)$ of $\UC(n,p\times q)$ yields a $(p,q)$-approximation of Artin's classical braid group $B_n$. We describe a right-angled Artin group presentation for $B_n(p\times q)$ in cases where $\UC(n,p\times q)$ is known to be aspherical. When $\min\{p,q\}=2$, our presentation agrees with Artin's classical presentation for $B_n$ removing the Artin-Tits relations. This allows us to deduce the value of the Lusternik-Schnirelmann category of the corresponding aspherical spaces $\UC(n,p\times q)$, as well as the values of all their $k$-sequential topological complexities, both in the classical (Rudyak \emph{et al.})~and distributional (Dranishnikov \emph{et al.})~contexts.
\end{abstract}

{\small 2020 Mathematics Subject Classification: 20F36, 55R80, 57M05, 57Q70 (20F65, 57M15, 82B26).}

{\small Keywords and phrases: Artin braid group, configuration space of thick particles, discrete Morse theory, sequential and distributional motion planning.}

\tableofcontents

\section{Introduction}
While the idea of braiding can be traced back to Hurwitz's 1891 paper \cite{hur91} on ramified coverings of surfaces, it was only until 1925 that the notion was formalized and studied in detail. In his seminal work~\cite{MR3069440}, Emil Artin showed that, for a positive integer $n$, the braid group $B_n$ on $n$-strands can be presented by $n-1$ generators $x_1, x_2,\ldots,x_{n-1}$ subject to the following two types of relations:
\begin{align}
x_ix_j&=x_jx_i, \text{ \ for $|i-j|>1$, and}\label{rarelations}\\
x_ix_{i+1}x_i&=x_{i+1}x_ix_{i+1}, \text{ \ for $1\leq i\leq n-2$.}\nonumber
\end{align}
The first type of relations, which we refer to as the RA relations, constitute the right-angled foundation of $B_n$, while to second type, which we refer to as the AT relations, give $B_n$ its Artin-Tits braiding essence. AT relations are responsible for deep connections of $B_n$ in theoretical physics via the Yang-Baxter relations. On the other hand, RA relations (and their extension to arbitrary graphs) provide a rich source of complex phenomena in geometric group theory. Given the independent relevance of these two types of relations, we aim at isolating each of them in a topological way. In this paper we address the task of isolating the RA relations.

\smallskip
For positive integers\footnote{We assume $n,p,q\geq2$ in order to avoid trivial situations.} $n$, $p$, $q$, let $\UC(n,p\times q)$ denote the configuration space of $n$ unlabelled non-overlapping unit squares in a rectangle of size $p\times q$. The homotopy colimit determined by the obvious inclusions $\UC(n,p\times q)\hookrightarrow\UC(n,(p+1)\times q)$ is known to agree with the configuration space of $n$ unlabelled non-overlapping unit disks in an infinite planar strip of width $q$, denoted  by $\UC(n,\infty\times q)$ or simply $\UC(n,q)$. Likewise, the union over all $q\geq2$ of the spaces $\UC(n,q)$ is known to be homotopy equivalent to the classical unordered configuration space $\UF(\mathbb{R}^2,n)$ of $n$ points in the plane. Indeed, inclusion maps yield homotopy equivalences
\begin{equation}\label{aproximacion}
\UC(n,p\times q)\simeq\UC(n,q) \text{ for } p\geq n \text{ \ \ and \ \ }\UC(n,q)\simeq\UF(\mathbb{R}^2,n) \text{ for } q\geq n.
\end{equation}
See \cite[bottom of page 2596]{MR4640135} for the first equivalence, and \cite[Theorems 1.2, 3.1, and the observations before Proposition~3.7]{MR4298668} for the second one. In particular the fundamental groups $B_n(p\times q):=\pi_1(\UC(n,p\times q))$ approximate $B_n$ as $p$ and $q$ increase. Unfortunately the algebraic approximation recovers the topological approximation only under limited conditions, as only a few of the spaces $\UC(n,p\times q)$ retain the asphericity property of $\UF(\mathbb{R}^2,n)$, see \cite[Theorem~3.2]{aspher} and \cite[bottom of page 374]{MR4298668}. Actually, discarding the spaces $\UC(n,p\times q)$ with $\min\{p,q\}\geq n$, whose asphericity comes directly from (\ref{aproximacion}), the only spaces $\UC(n,p\times q)$ known to be aspherical have either
\begin{equation}\label{casosaesfericos}
 n\in\{pq-1,pq-2\} \quad\text{ or }\quad \min\{p,q\}=2 \text{ \,with } \max\{p,q\}\geq n.
 \end{equation}
As explained later in the paper (Remark \ref{explicacion}), asphericity comes essentially from Abrams' Ph.D.~thesis \cite{MR2701024} in the first case, while (\ref{aproximacion}) and \cite[Theorem 3.6]{MR4298668} yield the assertion in the second case.

\begin{remark}\label{conceivable}{\em
It is conceivable that $\UC(n,p\times 2)$ would be aspherical for $p<n$ too. However, to the authors' best knowledge, such an interesting possibility stands as an intriguing open question.  
}\end{remark}

With this preparation, we give a bird's-eye view of our main achievements. At the foremost, we characterize most of the (aspherical) topologies summarized in (\ref{casosaesfericos}). This is attained through a precise description of the corresponding fundamental group $B_n(p\times q)$. In all cases addressed we obtain a right-angled Artin group presentation bearing some degree of resemblance with the RA-relations defining $B_n$. The resemblance is complete in cases satisfying the second condition in (\ref{casosaesfericos}). In fact, when $\min\{p,q\}=2$, we show that $B_n(p\times q)$ can be presented with generators $x_1,\ldots,x_{n-1}$ subject only to the RA-relations~(\ref{rarelations}) provided $\max\{p,q\}\geq\min\{n,\frac{n+5}2\}$. The latter inequality is a substantial relaxation of the corresponding one in the second condition of (\ref{casosaesfericos}). On the other hand, for the aspherical topologies in (\ref{casosaesfericos}) not addressed in this work, namely those with $n=pq-2$ and $\min\{p,q\}\geq3$, we show that $B_n(p\times q)$ is in fact the braid group of index 2 associated to the grid-type graph $\Gamma_{p,q}$ described in Subsection \ref{subsectionFSA}. It is worth noticing that the characterization of graph braid groups of index 2 admitting a right-angled Artin group structure is an open hard problem (see \cite[Section 5]{MR2833585}).

\smallskip
The following notation prepares a detailed description of aspherical topologies satisfying the first restriction in (\ref{casosaesfericos}). For a nonnegative integer $k$, we let $B(k)$ stand for the bipartite graph with vertices $u_i$ and $v_i$ for $1\leq i\leq k$, and an edge joining $u_i$ and $v_j$ whenever $i\leq j$. Additionally, for a graph $\Gamma$, we let $k+\Gamma$ stand for the graph obtained by adding $k$ isolated vertices to $\Gamma$. For instance $0+\Gamma=\Gamma$, while $B(0)$ is the empty graph.

\begin{theorem}\label{1hueco}
For $p,q\geq2$, $B_{pq-1}(p\times q)$ is a free group of rank $(p-1)(q-1)$. Indeed, $\UC(pq-1,p\times q)$ is homotopy equivalent to a wedge of $(p-1)(q-1)$ circles.
\end{theorem}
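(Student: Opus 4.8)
The plan is to reduce the statement to a transparent piece of graph combinatorics. The essential feature of the regime $n=pq-1$ is that a $p\times q$ board accommodates at most $pq$ unit squares, so every admissible configuration of $pq-1$ squares leaves exactly one empty cell, a single \emph{hole}. Intuitively the packing is rigid apart from the motion of this hole, and the hole may be slid into any grid cell; this is precisely the behaviour encoded by the $p\times q$ grid graph $G$ whose vertices are the cells of the board and whose edges join pairs of adjacent cells. I would first invoke the homotopy equivalence $\UC(pq-1,p\times q)\simeq\UX(pq-1,p\times q)$ between the continuous configuration space and its discretized cube complex, as developed earlier in the paper; this replaces the continuous problem by a finite combinatorial one.

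The combinatorial core is to show that the underlying complex of $\UX(pq-1,p\times q)$ is isomorphic to the grid graph $G$. In the discretized model a $k$-cube is specified by $k$ pairwise-disjoint grid edges together with single squares filling enough of the remaining cells, so a $k$-cube occupies $2k+(n-k)=(pq-1)+k$ distinct cells. Since $G$ has only $pq$ cells, this forces $k\le 1$, and the complex is therefore $1$-dimensional. Its $0$-cells are the placements of $pq-1$ squares, i.e.\ the choices of the empty cell, giving $pq$ vertices in bijection with the cells of the board; its $1$-cells are the choices of a single grid edge $\{a,b\}$ with the other $pq-2$ cells filled, and such a $1$-cell joins the $0$-cell with the hole at $a$ to the $0$-cell with the hole at $b$. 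Tracking the hole thus yields a graph isomorphism $\UX(pq-1,p\times q)\cong G$.

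To finish, I would observe that $G$ is connected for $p,q\ge 2$ (the hole reaches any cell by a sequence of moves), with $pq$ vertices and $q(p-1)+p(q-1)=2pq-p-q$ edges. Any connected graph is homotopy equivalent to a wedge of $b_1$ circles, where $b_1=E-V+1=(2pq-p-q)-pq+1=(p-1)(q-1)$; equivalently $1-\chi(G)=(p-1)(q-1)$. Hence $\UC(pq-1,p\times q)\simeq G$ is homotopy equivalent to a wedge of $(p-1)(q-1)$ circles and $B_{pq-1}(p\times q)=\pi_1(\UC(pq-1,p\times q))$ is free of that rank, as claimed. The only genuinely delicate point is the first step: because $n=pq-1$ sits at the opposite extreme from the usual ``sufficiently subdivided'' hypothesis for graph configuration spaces, the discretization must be justified through the thick-square results governing tight packings (the same circle of ideas underlying Abrams' asphericity argument \cite{MR2701024}) rather than by a generic criterion. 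This is where I expect the real work, or the precise citation, to lie, the subsequent graph-theoretic count being routine.
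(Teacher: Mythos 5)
Your proposal is correct and follows, in substance, the same route as the paper's proof: discretize via the equivariant deformation retraction $\UC(pq-1,p\times q)\simeq\UX(pq-1,p\times q)$ (Proposition \ref{UXmodel}), identify the resulting cube complex with the grid graph $\Gamma_{p,q}$, and conclude with the count $E-V+1=(2pq-p-q)-pq+1=(p-1)(q-1)$. The only place you genuinely diverge is the middle step. The paper first observes that for $n\geq pq-2$ no cell of $\UXnpq$ can contain a square of $\puz_{p,q}$ as an ingredient, so that $\UXnpq=\UDF(\Gamma_{p,q},n)$ (Remark \ref{explicacion}), and then quotes the general duality $\UDF(\Gamma,r)\cong\UDF(\Gamma,m-r)$ (Proposition \ref{dualidad}) with $r=pq-1=m-1$, whereas you verify this special case directly: your hole-tracking bijection is precisely the duality map $\varphi$ of Proposition \ref{dualidad} specialized to one hole, i.e.~to cells of dimension $0$ and $1$. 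Your version is more self-contained; the paper's buys a general statement that it reuses elsewhere (e.g.~in the notation for the proof of Theorem \ref{2huecos} and in Remark \ref{noraag}).

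Two points need tightening. First, your dimension count is incomplete as written: a cell of $\UX(n,p\times q)=\UDF(\puz_{p,q},n)$ may a priori also contain a \emph{square} ingredient (a 2-cell of $\puz_{p,q}$), a possibility your description of the cube structure omits, and your pigeonhole only excludes cells with two or more edge ingredients. The same counting disposes of squares --- the closure of a square ingredient occupies four board cells, so $a$ vertex, $b$ edge and $c$ square ingredients with $a+b+c=pq-1$ occupy at least $a+2b+4c\leq pq$ cells, forcing $c=0$ and $b\leq1$ --- and this is exactly the content of Remark \ref{explicacion}. Second, your closing worry about the discretization is unnecessary: Proposition \ref{UXmodel}, inherited from Alpert et al., holds for every $n\leq pq$ with no tight-packing or subdivision hypothesis; the ``sufficiently subdivided'' caveat you have in mind pertains to Abrams' equivalence $\UF(|\Gamma|,n)\simeq\UDF(\Gamma,n)$ for graphs, which is not the result being invoked here.
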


\begin{theorem}\label{2huecos}
For $p\geq3$, $B_{2p-2}(p\times2)$ is the right-angled Artin group determined by the graph 
$3+B(p-3)$.
\end{theorem}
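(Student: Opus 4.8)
The plan is to reduce the statement to a graph braid group computation and then extract a presentation via discrete Morse theory. Since here $n=2p-2=pq-2$ with $q=2$, every configuration of $2p-2$ non-overlapping unit squares in the $p\times 2$ rectangle is the complement of exactly two empty cells. Passing to the discrete cube-complex model that underlies the asphericity recorded in (\ref{casosaesfericos}) (Abrams' thesis), I would first use the complementarity between occupied and empty cells to identify $\UC(2p-2,p\times 2)$, up to homotopy, with the unordered $2$-strand discrete configuration space of the grid-type graph $\Gamma_{p,2}$ of Subsection \ref{subsectionFSA}. This yields an isomorphism of $B_{2p-2}(p\times 2)$ with the index-$2$ graph braid group of $\Gamma_{p,2}$, exactly the object the paper attaches to the case $n=pq-2$. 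Because the space is aspherical, pinning down $\pi_1$ pins down the homotopy type, so it suffices to compute this presentation.

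Next I would run the Farley--Sabalka discrete Morse theory on $\Gamma_{p,2}$: fix a spanning tree and a root, inducing the standard order on vertices and directed edges, and classify the critical cells of the resulting discrete Morse function on the $2$-dimensional cube complex. The unique critical $0$-cell furnishes the basepoint, the critical $1$-cells furnish the generators, and the critical $2$-cells furnish the relations. The first concrete task is to locate and count the critical $1$-cells: I expect precisely $2p-3$ of them, in agreement with the $2p-3$ vertices of $3+B(p-3)$, and I would organize them as three \emph{exceptional} generators together with $p-3$ pairs, the pairs labelled $u_i,v_i$ in anticipation of $B(p-3)$. Locating these critical cells amounts to reading off, for each column of the $2\times p$ strip, the blocked and unblocked moves of a hole, so the count should follow from a direct column-by-column bookkeeping.

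The core of the argument is the analysis of the critical $2$-cells, and this is where I expect the main difficulty. Each critical $2$-cell contributes a single relation, and in the two-strand setting I would show that every such relation is the commutator of two generators supported on disjoint parts of $\Gamma_{p,2}$; granting this, the group is automatically the right-angled Artin group of the graph whose edges are the commuting pairs of generators. The delicate remaining point is to pin down the \emph{exact} commutation pattern: I must show that each of the three exceptional generators commutes with no other generator (these become the three isolated vertices of $3+B(p-3)$), that no two $u$'s and no two $v$'s commute (bipartiteness of $B(p-3)$), and, most subtly, that $u_i$ commutes with $v_j$ \emph{precisely} when $i\leq j$. The triangular, asymmetric condition $i\leq j$ is the characteristic feature of $B(p-3)$; it should emerge from the linear order of the columns, a hole executing the $i$-th cycle being movable past a hole executing the $j$-th cycle without collision exactly when their supports lie on the correct side of one another, i.e.\ when $i\leq j$. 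Checking that the collapse scheme produces no relations beyond these commutators, and that none of the claimed commutations is missing, completes the identification with the right-angled Artin group on $3+B(p-3)$.

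As a safeguard against indexing errors before committing to the general case, I would verify the two smallest instances directly from the cube complex: $p=3$ gives $3+B(0)$, three isolated vertices, predicting $B_4(3\times 2)\cong F_3$; while $p=4$ gives $3+B(1)$, a single edge $u_1v_1$ on three isolated vertices, predicting $F_3*(\mathbb{Z}\times\mathbb{Z})$, where the lone commuting pair reflects the two holes rotating independently about disjoint squares. Agreement in these cases would validate both the generator count $2p-3$ and the emergence of the $i\leq j$ rule, after which an induction on $p$, adding one column at a time, should propagate the pattern.
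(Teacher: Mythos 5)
Your framework --- pass to the discrete model, note $\UX(2p-2,p\times2)=\UDF(\Gamma_{p,2},2p-2)$ (Remark \ref{explicacion}), invoke asphericity, and extract a Morse--Poincar\'e presentation from the Farley--Sabalka field --- is exactly the paper's framework (the paper stays on the $(2p-2)$-strand side and uses Proposition \ref{dualidad} only to streamline notation, but that difference is cosmetic). The proposal fails, however, at both of its central quantitative claims. First, the number of critical $1$-cells is not $2p-3$ but $3p-5$: for each deleted edge $e_i=[i,2p+1-i]$ with $2\leq i\leq p-1$ there are \emph{three} critical $1$-cells (in the paper's notation $a_i=e_i[i-1]$, $b_i=e_i[2p-i]$, $c_i=e_i[2p]$), plus one more for $e_1$; the same count $3p-5$ comes out if you run Farley--Sabalka on the dual $2$-strand complex instead. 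So the Morse presentation is not minimal, and a portion of the relations must be spent killing $p-2$ superfluous generators rather than imposing commutations --- indeed the number of critical $2$-cells is $\binom{p-1}{2}=(p-2)+\binom{p-2}{2}$, and only $\binom{p-2}{2}$ relations survive to become the edges of $3+B(p-3)$.

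Second, and more seriously, the relations produced by the critical $2$-cells are \emph{not} commutators of pairs of generators, so the step ``every such relation is the commutator of two generators, hence the group is automatically a RAAG'' is where your argument would break. Each critical $2$-cell $\{e_i,e_{i'}\}$ ($i<i'$) has its four boundary edges flow to only three distinct critical $1$-cells, yielding the conjugation-type relation (\ref{getridof}), namely $b_i\hspace{.3mm}a_{i'}\hspace{.3mm}\overline{b_i}=c_{i'}$, which involves three generators and imposes no commutation on its face. The right-angled structure is invisible in the raw Morse presentation: the paper first uses the $i=1$ relations to eliminate $c_{i'}=b_1a_{i'}\overline{b_1}$, leaving $b_ia_{i'}\overline{b_i}=b_1a_{i'}\overline{b_1}$ for $2\leq i<i'\leq p-1$, and then applies the non-obvious change of basis $b_1\mapsto y_1$, $b_i\mapsto y_1\overline{y_i}$, $a_i\mapsto x_i$, under which these become genuine commutations $y_i\,\&\,x_{i'}$; re-indexing gives exactly $3+B(p-3)$, including the triangular rule $i\leq j$ you anticipated. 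No column-by-column ``movability of holes'' bookkeeping, nor an induction on $p$, can make the Morse relations literally commutators, because they are not. Your checks for $p=3,4$ are correct but do not detect this: they are equally consistent with the conjugation-relation picture, since free and free-times-$\mathbb{Z}^2$ quotients arise either way in those small cases.
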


The case $p=2$ not included in Theorem \ref{2huecos} is special as $\UC(2,2\times2)\simeq S^1$, in view of~(\ref{aproximacion}), so that $B_2(2\times2)\cong\mathbb{Z}$.

Recall that the classifying space of the right-angled Artin group determined by a simple graph $\Gamma$ is homotopy equivalent to the polyhedral power $(S^1)^{\text{Flag}(\Gamma)}$, where $\text{Flag}(\Gamma)$ stands for the flag complex associated to $\Gamma$. The flag construction is superfluous in Theorem \ref{2huecos}, as $3+B(p-3)$ is bipartite. For instance, the cases $p\in\{3,4\}$ in Theorem \ref{2huecos} assert that $\UC(6,4\times2)\simeq(S^1\times S^1)\vee \UC(4,3\times2)$ with $\UC(4,3\times2)$ homotopy equivalent to $\bigvee_3S^1$.

\begin{remark}\label{noraag}{\em
According to \cite[Conjecture 5.1 and Figure 29]{MR2833585} and Proposition \ref{dualidad} below, $B_{pq-2}(p\times q)$ is not expected to admit a right-angled Artin group structure if $\min\{p,q\}\geq3$. Instead, the type of groups that might be arising here are illustrated by \cite[Theorem 2.5 and Examples~2.4]{Omar}, where it is shown that, as long as $p\geq3$, $B_{3p-2}(p\times3)$ is an HNN extension of a certain ``squared'' version of $B_{2p-2}(p\times 2)$. 
}\end{remark}

Our description of the aspherical topologies corresponding to the second condition in~(\ref{casosaesfericos}) is contained in Theorem \ref{combgen} below, which provides the announced combinatorial genesis for the RA relations in~$B_n$. Indeed, the relevant groups retain both the generators and the RA relations of $B_n$, while AT relations are absent.

\begin{theorem}\label{combgen}
For $n\geq2$, and either $n\leq p$ or $n\leq 2p-5$, $B_n(p\times 2)$ is the right-angled Artin group generated by $n-1$ elements $x_1,\ldots,x_{n-1}$ subject to the RA-type relations~(\ref{rarelations}).
\end{theorem}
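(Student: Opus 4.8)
The plan is to read the group $B_n(p\times 2)=\pi_1(\UC(n,p\times 2))$ off the two-skeleton of a CW model produced by discrete Morse theory. In the stated range $\UC(n,p\times 2)$ is homotopy equivalent to the unordered discretized configuration space of the ladder-type grid graph $\Gamma_{p,2}$ of Subsection~\ref{subsectionFSA}, which carries a Farley--Sabalka-style discrete vector field; the two hypotheses $n\le p$ and $n\le 2p-5$ are exactly the numerical conditions under which this matching has the clean set of critical cells described below, and under which (via Abrams~\cite{MR2701024}) the discretization faithfully records the homotopy type. The first step is to fix that matching and note that it has a single critical $0$-cell --- the ``ground state'' in which the $n$ squares occupy the leftmost cells --- which supplies a basepoint and proves connectivity.

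Next I would enumerate the critical $1$-cells. Each corresponds to a single blocked elementary move at an essential vertex of $\Gamma_{p,2}$, that is, to the interchange of two squares that are adjacent in the left-to-right order; indexing these by $i=1,\dots,n-1$ according to the adjacent pair they swap produces $n-1$ critical edges $x_1,\dots,x_{n-1}$. Since the critical $1$-cells generate the fundamental group of any CW model assembled from the Morse data, this already pins down a generating set matching that of the target right-angled Artin group.

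The heart of the matter is the enumeration of the critical $2$-cells together with their attaching words. Here I would show that a critical $2$-cell arises precisely from a pair of elementary moves supported on disjoint portions of the ladder, so that its boundary is the commutator $[x_i,x_j]$ with $|i-j|>1$, and that each such pair contributes exactly one critical $2$-cell. The decisive and, I expect, most delicate point is the \emph{absence} of any critical $2$-cell carrying an Artin--Tits relation: a braiding of three consecutive squares would force one square to pass two others inside a single cycle of the ladder, and the width-$2$ constraint makes the corresponding disc impossible to fill, so consecutive generators $x_i,x_{i+1}$ remain free. This is where I anticipate the real work, as it requires a precise description of the reduced critical $2$-cells on $\Gamma_{p,2}$ and a verification that the count is exactly the right one in the stated range --- the bound $n\le 2p-5$ reflecting the number of empty cells needed to carry two simultaneous swaps in disjoint regions while keeping configurations reduced.

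Feeding the single critical $0$-cell, the $n-1$ critical $1$-cells, and the commutator critical $2$-cells into the presentation attached to a discrete Morse function then yields a presentation of $B_n(p\times 2)$ with generators $x_1,\dots,x_{n-1}$ and relators exactly the RA-commutators of~(\ref{rarelations}); a routine Tietze comparison identifies this group with the right-angled Artin group in the statement. I stress that this is purely a $\pi_1$-computation performed on the two-skeleton and therefore does \emph{not} rely on asphericity; this is precisely what permits relaxing the hypothesis from the asphericity range $n\le p$ of~(\ref{casosaesfericos}) to the wider range $n\le 2p-5$, in which asphericity of $\UC(n,p\times 2)$ is itself not known (cf.\ Remark~\ref{conceivable}).
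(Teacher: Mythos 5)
Your proposal has a genuine gap, in fact two related ones. First, the discrete model you propose is the wrong one: in the range $p\le n\le 2p-5$ there are at least five empty squares, so the unordered discrete configuration space $\UDF(\Gamma_{p,2},n)$ of the \emph{graph} $\Gamma_{p,2}$ is a proper subcomplex of the complex that actually carries the homotopy type of $\UC(n,p\times2)$, namely $\UX(n,p\times 2)=\UDF(\puz_{p,2},n)$ built from the two-dimensional cubical complex $\puz_{p,2}$ (Proposition \ref{UXmodel}, Remark \ref{explicacion}). The two complexes share 1-skeleta, but the 2-cells having a square ingredient are indispensable: they produce the relations (\ref{relation4}), which are exactly what identifies the many Farley--Sabalka generators $\varepsilon_i(r,s,t)$ with one another and kills those with $s=0$ (Lemma \ref{generatorsreduction}). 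Dropping them, as your proposal implicitly does, computes a graph braid group of the grid rather than $B_n(p\times2)$; moreover, Abrams' equivalence $\UDF(\Gamma,n)\simeq\UF(|\Gamma|,n)$ concerns configurations of points on a 1-complex and cannot substitute for the equivariant retraction of $\C(n,p\times q)$ onto $\Xnpq$ that underlies Proposition \ref{UXmodel}.

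Second, and more decisively, the critical-cell structure you assert does not exist. The Farley--Sabalka field on $\UX(n,p\times2)$ has far more than $n-1$ critical 1-cells --- one for each deleted edge $e_i$ and each admissible distribution $(r,s,t)$ of blocked vertices (Definition \ref{notacion}, conditions (\ref{genconditions})) --- and its critical 2-cells have reduced boundary words that are conjugation-type relations (\ref{relation1})--(\ref{relation4}), not plain commutators $[x_i,x_j]$ of adjacent-swap generators. Your claims that the 2-cells carry exactly the RA commutators and that AT relations are excluded by a width-2 filling obstruction are precisely the statements needing proof, and in the paper they occupy Sections 5--7: a reduction of the raw presentation to $n-1$ generators $g_1,\dots,g_{n-1}$ via iterated $g_{n-1}$-conjugation (Proposition \ref{iteratedpowers}), the identification of the key relations, found only after extensive computer experimentation (Propositions \ref{relsconab0}, \ref{otrasrels}, \ref{relsfinales}), and a non-obvious change of basis $\phi$ (sending $g_i$ to products such as $h(2,3)\cdots h(2k,2k+1)$) under which those relations become the RAAG relations (Propositions \ref{cambiodebasefinal} and \ref{encompassing}). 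Note also that for $n\le p$ the paper uses none of this machinery: it passes to the infinite strip via (\ref{aproximacion}) and reads the presentation off the Alpert--Kahle--MacPherson complex $\ucell$ with the \emph{empty} gradient field, where 2-cells $e_{0_{i,j}}$ exist only for $i+1<j$ because of the bar condition (\ref{RAreason}); that combinatorial constraint, not a Morse-theoretic obstruction on the grid, is what makes the AT relations absent there. Your closing observation that the argument is a pure $\pi_1$ computation and so does not require asphericity is correct and agrees with the paper, but it does not repair the missing core of the proof.
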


Theorems \ref{1hueco}, \ref{2huecos} and \ref{combgen} provide a right-angled Artin group presentation for the fundamental group of any space $U(n,p\times q)$ with $\min\{p,q\}=2$ (i.e.~``of height 2'') when the number $pq-n$ of ``empty squares"  is different from $3$ and $4$. A few of the height-2 groups $B_n(p\times q)$ with $pq-n\in\{3,4\}$ also admit a right angled Artin group presentation (see Examples \ref{e1} below), though the defining graph is not well-behaved. This has prevented us from giving a general presentation for all height-2 cases with 3 or 4 empty squares.

\smallskip
As illustrated in Remark \ref{noraag}, primitive forms of the blend of RA and AT relations characterizing $B_n$ must start arising in $B_n(p\times q)$ for $\min(p,q)\geq3$ (i.e.~in height at least 3). Yet, interesting new fundamental groups (most likely coming from non-aspherical topologies) are expected to hold only for $\max\{p,q\}<n$ because, as observed in \cite[bottom of page 374]{MR4298668}, $B_n(q):=B_n(\infty\times q)\cong B_n$ for $q\geq3$.

\smallskip
Our methods can also be used to prove the fact that all inclusion maps of the form $\UC(n,p\times2)\hookrightarrow\UC(n,(p+1)\times2)$ among spaces relevant to Theorem \ref{combgen} induce conjugation-type isomorphisms in fundamental groups.

\begin{examples}\label{e1}{\em 
Our methods show that $B_5(4\times2)$ ---with 3 empty squares--- is the right-angled Artin group associated to the graph with vertices $x_1,x_2,x_3,x_4$ and 2 edges, the first one joining $x_1$ and $x_3$, and the second one joining $x_2$ and $x_4$. This yields the correct generators, but a commutator relation is missing with respect to (\ref{rarelations}), namely, the one associated to the missing commutativity relation $x_1x_4\neq x_4x_1$. Likewise, we can prove that $B_7(5\times2)$ ---also with 3 empty squares--- is generated by elements $x_i$ ($1\leq i\leq 6$), subject exclusively to the relations
\begin{align*}
x_1 & \mbox{ commutes with $x_6x_2^{-1}$ and with $x_6x_4^{-1}$;}\\
x_2 & \mbox{ commutes with $x_3^{-1}x_6$, with $x_4^{-1}x_6$ and with $x_5^{-1}x_6$;}\\
x_3 & \mbox{ commutes with $x_6x_4^{-1}$;}\\
x_4 & \mbox{ commutes with $x_5^{-1}x_6$.}
\end{align*}
By comparing these relations to the ones summarized in (\ref{codificados}) later in the paper, i.e., those amounting to the right-angled Artin group structure in Theorem \ref{combgen} when $p\leq n\leq 2p-5$, it becomes clear that the number of missing commutativity relations in this example is even larger than in the case of $B_5(4\times2)$. This is actually the first situation for which we do not know whether a right-angled Artin group structure arises. Lastly, our methods show that $B_6(5\times2)$ ---with 4 empty squares--- has a right-angled Artin group presentation fitting Theorem~\ref{combgen}. Further calculations suggest that this successful example would be an exceptional instance among spaces with 4 empty squares.
}\end{examples}

\section{LS-category and (distributional) sequential TC}
The results in the previous section allow us to give a full description of the Lusternik-Schnirelmann (LS) category of the aspherical spaces $\UC(n,p\times q)$ in Theorems \ref{1hueco}, \ref{2huecos} and \ref{combgen}, as well as of all their sequential topological complexities, both in the classical (Rudyak \emph{et al.})~and distributional (Dranishnikov \emph{et al.})~contexts. For background details on LS-category, on sequential topological complexity, and on distributional sequential topological complexity, the reader is referred to \cite{MR1990857}, \cite{MR3331610,MR2593704}, and \cite{jauhari-daundkar, MR4863030}, respectively.

\smallskip
For a space $X$ and an integer $r\geq2$, let $\zcl_{\mathbb{Q},r}(X)$, $\text{TC}_r(X)$, and $\text{dTC}_r(X)$ denote the rational $r$-th zero-divisor cup-length of $X$, the $r$-th sequential topological complexity of $X$, and the $r$-th distributional sequential topological complexity of $X$, respecively. These non-negative integers are related through the inequalities $\zcl_{\mathbb{Q},r}(X)\leq\text{dTC}_r(X)\leq\text{TC}_r(X)$. In particular, the three numbers must agree provided $\zcl_{\mathbb{Q},r}(X)=\text{TC}_r(X)$. The later equality is known to hold when $X=K(\text{RAAG}(\Gamma),1)$ is an Eilenberg-MacLane space for the right angled Artin group $\text{RAAG}(\Gamma)$ determined by a simplicial graph $\Gamma$. The actual values of $\text{TC}_r(K(\text{RAAG}(\Gamma),1))$ and of the LS-category $\text{cat}(K(\text{RAAG}(\Gamma),1))$, were first described in \cite{MR3642766} and \cite{MR2457428}, respectively ---see also \cite{MR4430946,MR3924508}. In both cases the answer is given in terms of the structure of cliques of $\Gamma$.

\begin{proposition}\label{cliques}
For $X=K(\emph{RAAG}(\Gamma),1)$,
\begin{itemize}
\item $\emph{cat}(X)$ is the largest cardinality of cliques of $\Gamma$.
\item $\emph{TC}_r(X)$ is the maximal total cardinality $\sum_{i=1}^r |C_i|$ of cliques $C_1,\ldots,C_r$ of \,$\Gamma$ with total empty intersection, $\bigcap_{i=1}^rC_i=\varnothing$.
\end{itemize}
\end{proposition}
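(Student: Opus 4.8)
The plan is to reduce both computations to the combinatorics of the rational cohomology ring of $X$, which is entirely controlled by the clique structure of $\Gamma$. As recalled before the statement, $X\simeq(S^1)^{\text{Flag}(\Gamma)}$, and $H^*(X;\mathbb{Q})$ is the exterior face ring of $\text{Flag}(\Gamma)$: the exterior $\mathbb{Q}$-algebra on degree-one generators $e_v$, one per vertex $v$ of $\Gamma$, modulo the monomial relations $e_ue_v=0$ whenever $\{u,v\}$ is not an edge of $\Gamma$. Consequently a squarefree monomial $e_{v_1}\cdots e_{v_k}$ is nonzero precisely when $\{v_1,\ldots,v_k\}$ spans a clique of $\Gamma$, and the cells of the polyhedral power are indexed by these cliques. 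In particular $\dim X$ equals the largest cardinality $m$ of a clique.

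First I would settle the assertion about $\cat$. The rational cup-length of $X$ is the largest $k$ admitting a nonzero product of $k$ positive-degree classes; by the previous paragraph this is exactly $m$. The classical bounds $\text{cup-length}_{\mathbb{Q}}(X)\leq\cat(X)\leq\dim X$ then trap $\cat(X)$ between $m$ and $m$, giving $\cat(X)=m$ as claimed; this is the argument of \cite{MR2457428}.

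Turn next to $\TC_r$. I would use $\TC_r(X)=\g(\Delta_r)$, the sectional category of the iterated diagonal $\Delta_r\colon X\to X^r$, together with the standard lower bound $\TC_r(X)\geq\zcl_{\mathbb{Q},r}(X)$, the nilpotency of $\ker\bigl(\Delta_r^*\colon H^*(X^r;\mathbb{Q})\to H^*(X;\mathbb{Q})\bigr)$. Writing $e_v^{(i)}$ for the class $e_v$ placed in the $i$-th tensor factor of $H^*(X^r;\mathbb{Q})=H^*(X;\mathbb{Q})^{\otimes r}$, the elements $e_v^{(i)}-e_v^{(j)}$ are zero-divisors. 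Given cliques $C_1,\ldots,C_r$ with $\bigcap_iC_i=\varnothing$, each vertex $v$ misses some factor $j_v$; forming $\prod_v\prod_{i\colon v\in C_i}\bigl(e_v^{(i)}-e_v^{(j_v)}\bigr)$ produces a product of $\sum_i|C_i|$ zero-divisors whose leading term, in an ordering that privileges the non-base factors, is the basis element $\bigotimes_i e_{C_i}$, nonzero precisely because each $C_i$ is a clique. This yields $\zcl_{\mathbb{Q},r}(X)\geq\sum_i|C_i|$, and maximizing over admissible clique families gives the stated quantity as a lower bound for $\TC_r(X)$.

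The main obstacle is the matching upper bound, that is, showing this zero-divisor cup-length is sharp and computes $\TC_r(X)$ exactly. Two things must be verified. On the cohomological side, one must check that no product of zero-divisors of length exceeding $\max\sum_i|C_i|$ survives: expanding such a product into the monomial basis $\bigotimes_ie_{S_i}$ forces each $S_i$ to be a clique, while the kernel condition forces the family to have empty total intersection, so an extremal argument bounds the length by $\max\sum_i|C_i|$. On the geometric side, one must produce a motion planner, namely an open cover of $X^r$ of the corresponding cardinality with partial sections of $\Delta_r$, realizing this bound; this is where the explicit polyhedral-product structure of $X$ is exploited, and it is the step carried out in \cite{MR3642766}. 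Combining both directions through $\zcl_{\mathbb{Q},r}(X)\leq\TC_r(X)\leq\zcl_{\mathbb{Q},r}(X)$ identifies $\TC_r(X)$ with the clique optimum and completes the proof.
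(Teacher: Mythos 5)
Your proposal is essentially correct, but note that the paper offers no proof of this proposition at all: it is quoted as a known result, with $\cat$ attributed to \cite{MR2457428} and $\TC_r$ to \cite{MR3642766} (see also \cite{MR4430946,MR3924508}). So the comparison is really between your reconstruction and the cited literature. Your $\cat$ argument is complete and is the standard one: in the exterior face ring $H^*\bigl((S^1)^{\mathrm{Flag}(\Gamma)};\mathbb{Q}\bigr)$ a squarefree monomial survives exactly on cliques, so rational cup-length and dimension both equal the maximal clique size $m$, and $\mathrm{cup}\text{-}\mathrm{length}_{\mathbb{Q}}(X)\leq\cat(X)\leq\dim X$ pins $\cat(X)=m$. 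Your zero-divisor lower bound for $\TC_r$ is also sound; the key point, which your ``leading term'' remark encodes, is that the basis element $\bigotimes_i e_{C_i}$ arises from exactly one choice pattern in the expansion of $\prod_v\prod_{i:\,v\in C_i}\bigl(e_v^{(i)}-e_v^{(j_v)}\bigr)$, because any factor contributing $e_v^{(j_v)}$ places $e_v$ in a slot $j_v$ with $v\notin C_{j_v}$; hence the coefficient is $\pm1$ and the product is nonzero. Where your write-up should be read with care is the closing inequality $\TC_r(X)\leq\zcl_{\mathbb{Q},r}(X)$: this is false as a general principle ($\zcl$ is only ever a lower bound), and what is actually needed is the \emph{geometric} upper bound $\TC_r(X)\leq\max\sum_i|C_i|$ coming from an explicit sectional-category estimate for the iterated diagonal on the polyhedral product --- precisely the content of \cite{MR3642766}. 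Since you defer exactly that step to the same reference the paper itself cites, your treatment is on par with (indeed more detailed than) the paper's, and the cohomological ``sharpness'' check you mention is not needed for the proposition itself, though it is what justifies the paper's separate claim that $\zcl_{\mathbb{Q},r}=\TC_r=\mathrm{dTC}_r$ for these spaces.
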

Empty cliques are allowed in the second instance of Proposition \ref{cliques}. In view of~(\ref{casosaesfericos}), Theorems \ref{1hueco}, \ref{2huecos} and \ref{combgen} then yield:

\begin{corollary}\label{secatsinartincases}
For $\min\{p,q\}\geq2$,
\begin{enumerate}
\item $\emph{cat}(\UC(pq-1,p\times q))=1$, and 
$$
\emph{TC}_r(\UC(pq-1,p\times q))=\emph{dTC}_r(\UC(pq-1,p\times q))=\begin{cases}
r-1, & p=q=2; \\
r, & \text{otherwise.}
\end{cases}
$$
\item $\emph{cat}(\UC(2p-2,p\times2))=\min\{2,\left\lfloor\frac{p}2\right\rfloor\}$, and
$$
\emph{TC}_r(\UC(2p-2,p\times2))=\emph{dTC}_r(\UC(2p-2,p\times2))=\begin{cases}
r-1, & p=2; \\
r, & p=3; \\
2r-1, & p=4; \\
2r, & p\geq5.
\end{cases}
$$
\item For $p\geq n=2m+\varepsilon$ with $m\geq1$ and $\varepsilon\in\{0,1\}$, $\emph{cat}(\UC(n,p\times2))=m$, and
$$
\emph{TC}_r(\UC(n,p\times2))=\emph{dTC}_r(\UC(n,p\times2))=rm-1+\varepsilon
$$\end{enumerate}
\end{corollary}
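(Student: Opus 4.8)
The plan is to feed the three right-angled Artin group presentations supplied by Theorems \ref{1hueco}, \ref{2huecos} and \ref{combgen} into the clique-theoretic formulas of Proposition \ref{cliques}. Since each relevant space is a $K(\text{RAAG}(\Gamma),1)$, the equality $\zcl_{\mathbb{Q},r}=\TC_r$ recalled before Proposition \ref{cliques} forces $\zcl_{\mathbb{Q},r}=\mathrm{dTC}_r=\TC_r$ through the inequalities $\zcl_{\mathbb{Q},r}\le\mathrm{dTC}_r\le\TC_r$; hence every distributional value in the statement follows at once from the corresponding classical value, and I only need to identify, for each defining graph $\Gamma$, its largest clique (for $\cat$) and the maximal total cardinality $\sum_{i=1}^r|C_i|$ of cliques $C_1,\dots,C_r$ with $\bigcap_i C_i=\varnothing$ (for $\TC_r$).

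For item (1), Theorem \ref{1hueco} identifies the group as free of rank $(p-1)(q-1)$, so $\Gamma$ is the edgeless graph on $N:=(p-1)(q-1)\ge1$ vertices, whose only cliques are $\varnothing$ and the singletons; thus $\cat=1$. For $\TC_r$ I observe that a family of singletons has empty common intersection exactly when they are not all equal: when $N\ge2$ this is achievable with every $C_i$ a singleton, giving $\TC_r=r$, whereas when $N=1$ (the case $p=q=2$, where $\UC(2,2\times2)\simeq S^1$) at least one $C_i$ must be empty, giving $\TC_r=r-1$. For item (2), the graph $3+B(p-3)$ is bipartite, so every clique has at most two elements; checking $p=3$ (three isolated vertices, $\cat=1$) against $p\ge4$ (a genuine edge appears, $\cat=2$) yields $\cat=\min\{2,\lfloor p/2\rfloor\}$. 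The $\TC_r$ computation then reduces to locating edges, with the immediate bound $\TC_r\le 2r$: for $p=4$ the graph $3+B(1)$ has a single edge, so any two size-$2$ cliques coincide and at least one clique must shrink, yielding $\TC_r=2r-1$; for $p\ge5$ the edges $\{u_1,v_1\}$ and $\{u_2,v_2\}$ of $B(p-3)$ are \emph{vertex-disjoint}, so two of the $C_i$ can be taken disjoint and $\TC_r=2r$.

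Item (3) is the heart of the computation. Here the defining graph is the complement of the path $P_{n-1}$ on $x_1,\dots,x_{n-1}$, so its cliques are exactly the independent sets of $P_{n-1}$, that is, the subsets of $\{1,\dots,n-1\}$ with no two consecutive indices; the maximum such size is $\lceil (n-1)/2\rceil=m$, giving $\cat=m$. For $\TC_r$ I exploit the parity dichotomy for maximum independent sets of a path. When $n=2m$ (so $\varepsilon=0$, $P_{n-1}=P_{2m-1}$) the maximum clique is the \emph{unique} set $\{1,3,\dots,2m-1\}$, so $r$ size-$m$ cliques with empty intersection are impossible; the optimum is $r-1$ copies of this set together with the disjoint size-$(m-1)$ clique $\{2,4,\dots,2m-2\}$, forcing $\TC_r=rm-1$. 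When $n=2m+1$ (so $\varepsilon=1$, $P_{n-1}=P_{2m}$) the two maximum cliques $\{1,3,\dots,2m-1\}$ and $\{2,4,\dots,2m\}$ are disjoint, so $r$ size-$m$ cliques with empty intersection do exist and $\TC_r=rm$. In both cases this reads $\TC_r=rm-1+\varepsilon$.

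The main obstacle is precisely the $-1$ in the even case, which rests on the uniqueness of the maximum independent set of an odd path and on ruling out larger configurations. I expect the cleanest route is an exact-packing argument: choosing indices $a_1<\dots<a_m$ with $a_{i+1}\ge a_i+2$ forces $a_m\ge a_1+2(m-1)\ge 2m-1$, and in $\{1,\dots,2m-1\}$ this pins every inequality to equality, so $a_i=2i-1$ and the maximizing clique is unique. Consequently any family attaining the trivial bound $\sum_i|C_i|=rm$ would consist of $r$ copies of this single set, contradicting $\bigcap_i C_i=\varnothing$; this gives the upper bound $\TC_r\le rm-1$ matching the explicit construction above.
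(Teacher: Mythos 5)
Your proposal is correct and follows essentially the same route as the paper's proof: feed the presentations of Theorems \ref{1hueco}, \ref{2huecos} and \ref{combgen} into Proposition \ref{cliques}, settle the distributional values via $\zcl_{\mathbb{Q},r}\leq\text{dTC}_r\leq\TC_r$, and exhibit extremal families of cliques --- indeed your extremal families coincide with the ones the paper writes down. The only substantive difference is one of completeness in your favor: you make explicit the upper-bound arguments the paper leaves implicit, most notably the exact-packing argument showing that the maximum independent set $\{1,3,\ldots,2m-1\}$ of the path on $2m-1$ vertices is \emph{unique}, which is precisely what forces the $-1$ in $\TC_r=rm-1$ when $n$ is even in item (3).

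Two loose ends are worth fixing. First, in item (1) the case $p=q=2$ concerns $\UC(3,2\times2)$, not $\UC(2,2\times2)$; both happen to be homotopy circles, and your argument only uses $N=1$, so this is a cosmetic slip. Second, your treatment of item (2) starts at $p=3$, while the statement also covers $p=2$, a case lying outside Theorem \ref{2huecos} (which requires $p\geq3$): there one must invoke separately that $B_2(2\times2)\cong\mathbb{Z}$ (the remark following Theorem \ref{2huecos}), i.e.~the right-angled Artin group on a single vertex, after which your $N=1$ argument from item (1) gives $\cat=1=\min\{2,\lfloor p/2\rfloor\}$ and $\TC_r=\text{dTC}_r=r-1$; likewise the value $\TC_r=r$ for $p=3$ should be recorded as the $N\geq2$ edgeless-graph case. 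With these trivial additions your proof is complete and matches the paper's.
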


\begin{proof}
All spaces treated here are Eilenberg-MacLane spaces of right angled Artin groups, so it suffices to describe the relevant cliques in each case. For instance, non-empty cliques are singletons in part 1, as well as in part 2 with $2\leq p\leq3$. This yields the relevant assertions.

For part 2 with $p\geq4$: Since the graph $B(p-3)$ is bipartite, maximal cliques have cardinality either 1 or 2. If $p=4$, there is a single maximal clique of cardinality 2, namely $\{u_1,v_1\}$, which can be taken as $C_1=\cdots=C_{r-1}$ in Proposition \ref{cliques}, with $C_r$ being any other singleton. For $p\geq5$, take $C_1,\ldots, C_{r-1}$ as before, and $C_r=\{u_2,v_2\}$. Again, this yields the relevant asertions.

Lastly, in part 3 take $C_1=\cdots=C_{r-1}=\{1,3,5,\ldots,2m-1\}$, which is the clique of maximal possible cardinality. Then take $C_r=\{2,4,\cdots,2m-2\}$ when $n$ is even, and $C_r=\{2,4,\cdots,2m\}$ when $n$ is odd.
\end{proof}

Some of the assertions in part 3 of Corollary \ref{secatsinartincases} recover results from N.~Wawrykow, who used a Salvetti-type homotopy model for $\UC(n,2)$ slightly different from the polyhedral-product model used in the proof of Proposition \ref{cliques}. Namely, our LS-category assertion is essentially contained in \cite[Theorem 1.1 and Proposition 3.2]{MR4840250}, while our (d)TC$_r$-assertions for $n$ odd recover \cite[Proposition~4.1]{wawstrip}. Our (d)TC$_r$-assertions for an even $n$ are new.

\section{Preliminaries}\label{prelims}

\subsection{Discrete Morse-Poincar\'e approach to the fundamental group}\label{dmtatofg}
The calculations in this paper are based on Forman's discrete Morse theory. We review the methods and refer the reader to \cite{MR2171804,MR1358614} for details.

\smallskip
Let $\mathcal{F}$ denote the face poset of a connected finite regular CW complex $X$. Here $\mathcal{F}$ is partially ordered by inclusion of closures of cells. For a cell $a\in\mathcal{F}$, we write $a^{(p)}$ to indicate that $a$ is $p$-dimensional. Consider the Hasse diagram $H_\mathcal{F}$ of $\mathcal{F}$ as a directed graph with arrows $a^{(p+1)}\searrow b^{(p)}$ oriented from the higher dimensional cell $a$ to its codimension-1 face $b$. A partial matching $W$ on $H_\mathcal{F}$ is a directed subgraph of $H_\mathcal{F}$ all whose vertices have degree 1, while the corresponding $W$-modified Hasse diagram $H_\mathcal{F}(W)$ is obtained from $H_\mathcal{F}$ by reversing all arrows of $W$. It is standard to use the notation $b^{(p)}\nearrow a^{(p+1)}$ for such a reversed edge, calling $a$ and $b$ the $W$-collapsible and $W$-redundant cells, respectively, of the $W$-pair $(a,b)$. We then focus on $W$-gradient paths, i.e., zigzag paths $\lambda$ in $H_\mathcal{F}(W)$
$$
a_0\nearrow b_1\searrow a_1\nearrow b_2\searrow\cdots\nearrow b_k\searrow a_k.
$$
Note that the condition $a_i\neq a_{i+1}$ is forced by construction. We say that such a $W$-gradient path $\lambda$ is a cycle when $a_0=a_k$ (in which case $k\geq2$ must hold). We say that $W$ is a gradient field if it has no cycles. In such a case, cells of $X$ that are neither redundant nor collapsible are said to be $W$-critical.

\smallskip
Let $X$ be as above and assume $W$ is a gradient field on $X$ with a single $W$-critical 0-cell $v_0$. As shown in \cite[Proposition 2.3]{MR2171804}, $0$-cells and $W$-collapsible 1-cells of $X$ assemble a maximal tree $T_{X,W}$ of the 1-skeleton of $X$. The ``Morse-Poincar\'e'' presentation for $\pi_1(X,v_0)$ in \cite[Theorem~2.5]{MR2171804} can be summarized as follows. First, collapse $T_{X,W}$ to $v_0$ to get a generating set $\{\beta_e\}_{e}$ of $\pi_1(X,v_0)$, where $e$ runs over the set of $W$-critical 1-cells of $X$. Relations are then given by the \emph{reduced} form of boundary words of $W$-critical 2-cells. In order to spell this out, start by orienting edges of $T_{X,W}$ so they point away from $v_0$, while any other 1-cell of $X$ is oriented arbitrary. For each 0-cell $u$, let $\beta_u$ denote the unique oriented simplicial path in $T_{X,W}$ from $v_0$ to $u$. Then, for any oriented 1-cell $e$ from $u_1$ to $u_2$, the loop
\begin{equation}\label{representingloop}
\beta_{u_1}\star e\star\beta_{u_2}^{-1}
\end{equation}
represents a homotopy class $\beta_e\in\pi_1(X;v_0)$. In what follows we denote $\beta_e$ simply by $e\in\pi_1(X;v_0)$, letting the context clarify whether we refer to the actual $W$-critical 1-cell or to the homotopy class so defined (we will mostly be interested in the latter one). In these terms, a generating set of $\pi_1(X,v_0)$ is given by the set of (homotopy classes of) $W$-critical 1-cells $e$. In order to define the relations among these generators, a reduction process is constructed for elements $z\in\pi_1(X,v_0)$ expressed as words in the alphabet given by oriented 1-cells and their inverses. The actual element of $\pi_1(X,v_0)$ does not change throughout the process, but its expression as a word gets standardized. Explicitly, in a typical step of the process one chooses a letter $w_i$ of a given word $w=w_1w_2\cdots w_k$ representing $z$, and takes the following action:
\begin{itemize}
\item If $w_i$ is a collapsible 1-cell or its inverse, then we simply delete $w_i$ from $w_1w_2\cdots w_k$. This corresponds to the fact that $T_{X,W}$ has been collapsed down to $v_0$.
\item If $w_i$ is a redundant 1-cell, then we replace the letter $w_i$ in $w_1w_2\cdots w_k$ by the subword
$$
w_{i,\ell_i}^{-\epsilon_{i,\ell_i}}w_{i,\ell_{i-1}}^{-\epsilon_{i,\ell_{i-1}}}\cdots w_{i,1}^{-\epsilon_{i,1}}.
$$
Here, as depicted in Figure \ref{coherent}, we have oriented the $W$-pair $W(w_i)$ of $w_i$ coherently to $w_i$, forming then the boundary word $$w_i w_{i,1}^{\epsilon_{i,1}} w_{i,2}^{\epsilon_{i,2}} w_{i,3}^{\epsilon_{i,3}}\cdots w_{i,\ell_i}^{\epsilon_{i,\ell_i}}$$ for $W(w_i)$, where the exponent $\epsilon_{i,j}\in\{\pm1\}$ of $w_{i,j}$ is 1 if and only if the chosen orientation of $w_{i,j}$ is coherent with that of $W(w_i)$. The resulting substitution thus bookkeeps the simple collapse in Figure \ref{coherent}.
\item Accordingly, if $w_i$ is the inverse of a redundant 1-cell, then we replace the letter $w_i$ in $w_1w_2\cdots w_k$ by the subword
$$
w_{i,1}^{\epsilon_{i,1}}w_{i,2}^{\epsilon_{i,2}}\cdots w_{i,\ell_i}^{\epsilon_{i,\ell_i}}.
$$
\end{itemize}

\begin{figure}
\centering
\begin{tikzpicture}[x=.6cm,y=.6cm]
\draw (0,0) ellipse (55pt and 55pt);
\draw[very thick,->](3.225,.1)--(3.225,.1);
\draw(3.05,0)--(1.2,0);
\draw[->](-1.2,0)--(-3,0);
\draw[->](3,.3)--(-2.3,1.9);
\draw[->](3,-.3)--(-2.3,-1.9);
\draw[->](2.95,.6)--(-.5,2.95);
\draw[->](2.95,-.6)--(-.5,-2.95);
\draw[->](2.85,1)--(2,2.25);
\draw[->](2.85,-1)--(2,-2.25);
\node at (0,3.7) {};
\node at (0,0) {$W(w_i)$};
\node at (3.7,0) {\,\,\,\,$w_i$};
\draw[very thick] (2.93,-1.34) arc (-25:25:1.9cm);
\draw[very thick] (3.01,1.41)--(2.82,1.32);
\draw[very thick] (3.01,-1.41)--(2.82,-1.32);

\draw (10,0) ellipse (55pt and 55pt);
\draw[very thick,->](13.225,.1)--(13.225,.1);
\node at (10,3.7) {};
\node at (10,0) {$W(w_i)$};
\node at (13.7,0) {\,\,\,\,$w_i$};
\node at (12.65,2.8) {$w_{i,1}$};
\node at (7.3,2.85) {$w_{i,2}$};
\node at (7.1,-3) {$w_{i,\ell_i-1}$};
\node at (6.4,.1) {$\vdots$};
\node at (12.65,-3.05) {$w_{i,\ell_i}$};
\draw[very thin] (10,0) ellipse (22pt and 22pt);
\draw[->] (11.289,0)--(11.289,0);
\draw[very thick] (12.93,-1.34) arc (-25:25:1.9cm);
\draw[very thick] (13.01,1.41)--(12.82,1.32);
\draw[very thick] (13.01,-1.41)--(12.82,-1.32);
\draw[very thick] (10,3.07)--(10,3.33);
\draw[very thick] (10,-3.09)--(10,-3.35);
\draw[very thick] (7.17,1.31)--(6.97,1.4);
\draw[very thick] (7.17,-1.31)--(6.97,-1.4);
\end{tikzpicture}
\caption{The simple collapse for a $W$-pair and the corresponding step in the reduction process}
\label{coherent}
\end{figure}

Starting with some boundary word of each $W$-critical 2-cell $\alpha$, the process is iterated eliminating along the way every 2-letter subword of the form $e\cdot e^{-1}$ and $e^{-1}\cdot e$. As proved in \cite[Proposition~2.4]{MR2171804}, the process ends after a finite number of steps yielding a word $w_\alpha$ on $W$-critical 1-cells or their inverses. Furthermore, the resulting word $w_\alpha$ depends only on $\alpha$ (and, of course, on $W$), but not on the different possible choices made along the actual process. The set of relations for the Morse-Poincar\'e presentation of $\pi_1(X,v_0)$ is $\{w_\alpha\,\colon\alpha\text{ is a $W$-critical 2-cell}\}$.

\subsection{Discrete configuration spaces}\label{subsectionUDF}
The reader is referred to \cite{MR2701024,MR2171804} for details and proof arguments of the facts reviewed in this and the next subsection. 

For a graph $\Gamma$ with cells $e$, i.e., vertices and (closed) edges of $\Gamma$, Abrams defined the discrete configuration space $\DF(\Gamma,n)$ of $n$ non-colliding labelled cells of $\Gamma$ as the cubical subcomplex of $\Gamma^n$ consisting of the cells $e_1 \times\cdots\times e_n$ satisfying $e_i \cap e_j = \varnothing$ for $i\neq j$. We use the standard notation $(e_1,\ldots,e_n) := e_1 \times\cdots\times e_n$. The corresponding unordered discrete configuration space $\UDF(\Gamma,n)$ is the quotient cubical complex of $\DF(\Gamma,n)$ by the cubical free action of the symmetric group $\Sigma_n$ on $n$ letters that permutes cell factors. The cell of $\UDF(\Gamma,n)$ given by the $\Sigma_n$-orbit of $(e_1,\ldots, e_n)$ will be denoted by $\{e_1,\ldots, e_n\}$.

\begin{remark}\label{extension}{\em
Abrams' construction of discrete configuration spaces on graphs, both in the ordered and unordered setting, applies word for word for more general cell complexes. Moreover, when $K$ is a cubical complex, the discrete configuration space $\DF(K,n)$ is a cubical subcomplex of $K^n$, while the orbit space $\UDF(K,n)$ inherits a natural cubical structure. In this context, if $L$ is a subcomplex of $K$ then, by construction, $\DF(L,n)$ (respectively, $\UDF(L,n)$) is a subcomplex of $\DF(K,n)$ (respectively, $\UDF(K,n)$).
}\end{remark}

In his Ph.D.~thesis \cite{MR2701024}, Abrams gave two arguments showing a remarkable property of discrete configuration spaces on graphs:

\begin{theorem}[Abrams, 2000]\label{Eilenbergmaclane}
Both $\DF(\Gamma,n)$ and $\UDF(\Gamma,n)$ are aspherical. (The first space might fail to be path-connected, in which case all of its components are aspherical.)
\end{theorem}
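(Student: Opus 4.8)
The plan is to prove that $\DF(\Gamma,n)$ and $\UDF(\Gamma,n)$ are aspherical by showing each connected component is locally $\operatorname{CAT}(0)$, so that its universal cover is a complete $\operatorname{CAT}(0)$ space and hence contractible. Since $\DF(\Gamma,n)$ is a cubical complex (a subcomplex of the cube complex $\Gamma^n$), and $\UDF(\Gamma,n)$ inherits a cubical structure from the free $\Sigma_n$-action, Gromov's link condition applies: a finite-dimensional cubical complex carries a $\operatorname{CAT}(0)$ metric on its universal cover precisely when the link of every vertex is a flag simplicial complex. So the task reduces to a local, combinatorial verification of the flag condition at each vertex.

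First I would fix a vertex of $\DF(\Gamma,n)$, namely a configuration $(v_1,\ldots,v_n)$ of $n$ distinct vertices of $\Gamma$, and describe its link. An edge of $\Gamma^n$ emanating from this vertex corresponds to sliding one of the $n$ tokens, say the token at $v_i$, along a graph edge $\epsilon_i$ of $\Gamma$ incident to $v_i$; the non-collision condition $e_j\cap e_k=\varnothing$ determines exactly which collections of such moves span a cube. The link vertices are the admissible single slides, and a collection of link vertices spans a simplex exactly when the corresponding edges $\epsilon_i$ are pairwise disjoint (they involve distinct tokens and their closures do not meet). The key observation is that this ``pairwise disjoint'' spanning rule is manifestly determined by pairwise conditions, which is precisely the combinatorial content of being a flag complex: any set of link vertices that is pairwise joined by edges in the link bounds a simplex. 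I would verify the flag property directly from this description, checking that if every pair among a set of candidate slides is simultaneously admissible, then the whole set is.

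For the unordered space $\UDF(\Gamma,n)$ I would use that the $\Sigma_n$-action permuting the factors is free and cubical (Remark on the quotient structure), so the quotient map is a covering of cube complexes and the link of a vertex of $\UDF(\Gamma,n)$ is isomorphic to the link of any of its preimages in $\DF(\Gamma,n)$. Hence the flag condition is inherited, and $\UDF(\Gamma,n)$ is locally $\operatorname{CAT}(0)$ as well. Asphericity then follows for every connected component of either space: the universal cover of each component is $\operatorname{CAT}(0)$, therefore contractible, so all higher homotopy groups vanish. I would note separately that $\DF(\Gamma,n)$ may be disconnected (tokens can be trapped in different combinatorial arrangements with no path between them), which accounts for the parenthetical remark, while the $\operatorname{CAT}(0)$ argument applies componentwise and is unaffected by disconnectedness.

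The main obstacle I expect is the careful bookkeeping in the flag verification: I must be precise about which slides are compatible, treating correctly the boundary cases where two tokens sit on vertices joined by an edge (so that one token's slide would collide with the other's position) versus where the edges merely share an endpoint in $\Gamma$ without both being occupied. These incidence subtleties are exactly where the non-collision condition $e_i\cap e_j=\varnothing$ must be parsed with care, and getting the link description exactly right---so that the ``pairwise implies global'' flag implication genuinely holds---is the technical heart of the argument. Everything after the link computation is a formal appeal to Gromov's criterion and the $\operatorname{CAT}(0)$ contractibility theorem.
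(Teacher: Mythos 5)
The paper does not actually prove this statement: it is quoted as Abrams' theorem, with the proof deferred to his thesis \cite{MR2701024} (the text even notes that Abrams gave \emph{two} arguments there). So there is no in-paper proof to match against; what you have written is, in substance, Abrams' own principal argument, namely that $\DF(\Gamma,n)$ is a non-positively curved cube complex. Your link description is correct and the flag verification does go through for exactly the reason you isolate: a single slide $(i,\epsilon_i)$ is a link vertex only if $\epsilon_i$ is disjoint from every occupied vertex $v_j$, $j\neq i$, so the edge--vertex disjointness is already encoded in individual admissibility, and what remains for a collection of slides to span a cube is precisely the pairwise conditions (distinct tokens, since two slides of the same token are never adjacent, and pairwise disjoint closed edges); hence pairwise adjacency in the link implies the whole collection spans a simplex. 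Combined with Gromov's criterion and Cartan--Hadamard this gives asphericity componentwise, exactly as you say. One simplification worth noting for the unordered case: since the $\Sigma_n$-action is free and cubical (freeness follows from the disjointness of the cells in any cube), the quotient map $\DF(\Gamma,n)\to\UDF(\Gamma,n)$ is a covering map, so $\pi_k(\UDF(\Gamma,n))\cong\pi_k$ of a component of $\DF(\Gamma,n)$ for $k\geq2$ and asphericity descends immediately; there is no need to re-verify the link condition downstairs (which would otherwise require checking that no faces of a cube get identified in the quotient --- true here, again by disjointness of the cells, but an extra step your write-up glosses over).
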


Discrete configuration spaces are interesting in several respects. For instance, for a graph~$\Gamma$, $\UDF(\Gamma,n)$ is known to recover the homotopy type of the classical configuration space $\UF(|\Gamma|,n)$ of the topological realization $|\Gamma|$, provided~$\Gamma$ satisfies a mild subdivision condition that depends on $n$, see \cite{MR2701024,MR3276733}. For instance, when $n=2$, the subdivision requirement is satisfied by all simple graphs. On the other hand, and more important for our purposes, discrete configuration spaces satisfy the duality property in Proposition~\ref{dualidad} below. This property might be known to experts (it was hinted to the authors by an anonymous referee for an earlier version of this work), though we have not been able to find an explicit statement in the literature. We provide proof details for completeness.

\begin{proposition}\label{dualidad}
Assume $\Gamma$ is a simple graph with $m$ vertices. For $1\leq r\leq m-1$, there is an isomorphism $\UDF(\Gamma,r)\cong\UDF(\Gamma,m-r)$ of cube complexes. (Note that $\UDF(\Gamma,m)$ is a single point.)
\end{proposition}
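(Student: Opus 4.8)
The plan is to exhibit an explicit \emph{complementation} involution at the level of cells. Given a cell $c=\{e_1,\dots,e_r\}$ of $\UDF(\Gamma,r)$, let $E(c)$ be its set of edge-factors and let $\operatorname{supp}(c)\subseteq V(\Gamma)$ be the set of vertices lying in the closure of some $e_i$ (the endpoints of the edges in $E(c)$ together with the vertex-factors of $c$). I would then define
\[
\Phi(c)\;=\;E(c)\ \cup\ \bigl(V(\Gamma)\setminus\operatorname{supp}(c)\bigr),
\]
i.e.\ keep every edge of $c$ and replace all of its vertex-factors by the vertices that $c$ does \emph{not} cover. Since the $r$ cells of $c$ have pairwise disjoint closures, the vertices in $V(\Gamma)\setminus\operatorname{supp}(c)$ are disjoint from one another and from every edge in $E(c)$, so $\Phi(c)$ is again an admissible configuration. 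If $c$ has $k$ edge-factors and $\ell$ vertex-factors, with $k+\ell=r$, then $|\operatorname{supp}(c)|=2k+\ell$, whence $\Phi(c)$ has $k+(m-2k-\ell)=m-r$ cells; here disjointness forces $2k+\ell\le m$, guaranteeing a nonnegative vertex count. Thus $\Phi$ carries cells of $\UDF(\Gamma,r)$ to cells of $\UDF(\Gamma,m-r)$, and it is manifestly well defined on unordered configurations, being described purely in terms of the underlying set of cells.

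Next I would verify that $\Phi$ is involutive. A direct set computation gives that the vertices covered by $\Phi(c)$ are exactly $V(\Gamma)$ minus the vertex-factors of $c$; consequently the vertices left uncovered by $\Phi(c)$ are precisely the vertex-factors of $c$, so that $\Phi(\Phi(c))=c$. This yields $\Phi_{m-r}\circ\Phi_r=\operatorname{id}$ together with its mirror, so $\Phi$ is a bijection on cells. Moreover it preserves dimension: the dimension of a cell equals its number of edge-factors, and $\Phi$ leaves $E(c)$ untouched.

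The crux is to upgrade this dimension-preserving bijection to an isomorphism of cube complexes by checking compatibility with faces. A codimension-one face of the $k$-cube $c$ is obtained by collapsing one edge $e=\{a,b\}\in E(c)$ to one of its endpoints $v$; write $\partial_{e\to v}c$ for the resulting cell. I would establish the key identity
\[
\Phi\bigl(\partial_{e\to v}\,c\bigr)\;=\;\partial_{e\to v'}\,\Phi(c),
\]
where $v'$ denotes the \emph{other} endpoint of $e$: collapsing $e$ to $v$ frees the vertex $v'$ from $\operatorname{supp}$, and complementation then promotes $v'$ to a new vertex-factor. Iterating this over all edges shows that $\Phi$ identifies the cube $c\cong[0,1]^{k}$ with the cube $\Phi(c)\cong[0,1]^{k}$ via the reversal of every coordinate, which is a cubical automorphism of $[0,1]^k$; and these local affine identifications glue consistently across shared faces precisely because $\Phi$ is an involution of the whole face poset. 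Hence $\Phi$ assembles into an isomorphism of cube complexes $\UDF(\Gamma,r)\cong\UDF(\Gamma,m-r)$. I expect the face-compatibility identity to be the main obstacle: one must track carefully that collapsing an edge to one endpoint in $c$ corresponds to collapsing it to the \emph{opposite} endpoint in $\Phi(c)$ (the coordinate reversal), and the involutivity of $\Phi$ is exactly what certifies that these cube-by-cube identifications are mutually consistent.
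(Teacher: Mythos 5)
Your proposal is correct and takes essentially the same approach as the paper: the paper's map $\varphi$ is exactly your complementation $\Phi$ (retain the edge-factors, replace the vertex-factors by the uncovered vertices), and the face-compatibility it records in Figure~\ref{facestructure} is precisely your identity $\Phi(\partial_{e\to v}c)=\partial_{e\to v'}\Phi(c)$ with the opposite endpoint $v'$. You merely spell out the involutivity and the cube-by-cube coordinate reversal in more detail than the paper's ``easy check.''
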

\begin{proof}
Let $V$ stand for the set of vertices of $\Gamma$. A $d$-dimensional cube $c$ of $\UDF(\Gamma,r)$ has the form $c=\{e_1,\ldots,e_d,u_1,\ldots,u_{r-d}\}$, where $d\leq \min\{r,m-r\}$, $u_1,\ldots,u_{r-d}\in V$, and each $e_i$ is an edge, say joining vertices $e_{i,1}$ and $e_{i,2}$. Since the set of vertices $$W_c:=V\setminus\left(\{e_{i,1},e_{i,2}\,|\,1\leq i\leq d\}\cup\rule{0mm}{4mm}\{u_j\,|\,1\leq j\leq r-d\}\right)$$ has cardinality $m-r-d$, we get a $d$-dimensional cube $\varphi(c):=\{e_1,\ldots,e_d\}\cup W_c$ of $\UDF(\Gamma,m-r)$. This construction is reversible (associating a $d$-dimensional cube $\varphi(c')$ of $\UDF(\Gamma,r)$ to each $d$-dimensional cube $c'$ of $\UDF(\Gamma,m-r)$), and sets a bijection between the $d$-dimensional cubes of $\UDF(\Gamma,r)$ and those of $\UDF(\Gamma,m-r)$. An easy check (see Figure \ref{facestructure}) then shows that this bijection preserves face structures.
\end{proof}

\begin{figure}[h!]
\centering
\begin{tikzpicture}[x=.8cm,y=.8cm]
\draw (0,0)--(1,0);
\node at (0,0) {$\bullet$};\node at (1,0) {$\bullet$};
\draw (0,1)--(1,1);
\node at (0,1) {$\bullet$};\node at (1,1) {$\bullet$};
\draw (0,2)--(1,2);
\node at (0,2) {$\bullet$};\node at (1,2) {$\bullet$};
\draw (0,3)--(1,3);
\node at (0,3) {$\bullet$};\node at (1,3) {$\bullet$};
\node at (0.5,0.26) {\footnotesize$e_d$};
\node at (0.5,1.26) {\footnotesize$\ldots$};
\node at (0.5,2.26) {\footnotesize$e_2$};
\node at (0.5,3.26) {\footnotesize$e_1$};
\draw (-1.7,1.5) ellipse (20pt and 30pt);
\draw (2.7,1.5) ellipse (20pt and 30pt);
\node at (-1.7,1.5) {\footnotesize$W_c$};
\node at (2.7,1.5) {\footnotesize$U_c$};

\draw[thick, dotted] (0+10,0)--(1+10,0);
\node at (0+10,0) {$\bullet$};\node at (1+10,0) {$\bullet$};
\draw (0+10,1)--(1+10,1);
\node at (0+10,1) {$\bullet$};\node at (1+10,1) {$\bullet$};
\draw (0+10,2)--(1+10,2);
\node at (0+10,2) {$\bullet$};\node at (1+10,2) {$\bullet$};
\draw (0+10,3)--(1+10,3);
\node at (0+10,3) {$\bullet$};\node at (1+10,3) {$\bullet$};
\node at (0.5+10,0.26) {\footnotesize$e_d$};
\node at (0.5+10,1.26) {\footnotesize$\ldots$};
\node at (0.5+10,2.26) {\footnotesize$e_2$};
\node at (0.5+10,3.26) {\footnotesize$e_1$};
\draw (-1.7+10,1.5) ellipse (20pt and 30pt);
\draw (2.7+10,1.5) ellipse (20pt and 30pt);
\node at (-1.7+10,1.5) {\footnotesize$W_c$};
\node at (2.7+10,1.5) {\footnotesize$U_c$};
\node at (11,-.26) {\footnotesize$u$};
\node at (10,-.26) {\footnotesize$w$};

\draw [->] (11.2,-.26) to [out=0,in=210] (12.03,.36);
\draw [->] (9.74,-.26) to [out=180,in=340] (9.01,.36);

\end{tikzpicture}
\caption{Left: The $d$-cell $c=\{e_1,\ldots,e_d\}\cup U_c$ determines and is determined by the cell $\varphi(c)=\{e_1,\ldots,e_d\}\cup W_c$. Here $U_c=\{u_1,\ldots,u_{r-d}\}$ as in the proof, and $U_c$, $W_c$ and the end points of the edges $e_i$ partition $V$. Right: The face of $c$ obtained after replacing $e_d$ by $u$ corresponds to the face of $\varphi(c)$ obtained after replacing $e_d$ by $w$.}
\label{facestructure}
\end{figure}

\subsection{Farley-Sabalka gradient field}\label{subsectionFSA}
Fix positive integers $p$ and $q$. In this work we focus attention on the graph $\Gamma_{p,q}$ given by the restriction of the integer grid $\mathbb{Z}\times\mathbb{Z}$ to the rectangle $[1,p]\times[1,q]$. The left hand-side in Figure~\ref{maxtree} illustrates the case $(p,q)=(6,4)$. We assess the topology of $\UDF(\Gamma_{p,q},n)$ and related complexes via the gradient field $W_{p,q,n}$ constructed in \cite{MR2171804} by Farley and Sabalka. A detailed description of $W_{p,q,n}$ follows.

\smallskip
Let $T_{p,q}$ be the horizontal zigzag maximal tree of $\Gamma_{p,q}$, as illustrated on the right hand-side of Figure \ref{maxtree} for $(p,q)=(6,4)$. The linearity of $T_{p,q}$ yields on the nose a linear ordering of vertices of $\Gamma_{p,q}$, which we refer to as the $T_{p,q}$-ordering. Vertices of $\Gamma_{p,q}$ will then be denoted by their assigned number, while an edge of $\Gamma_{p,q}$ joining vertex $i$ to vertex $j$ with $i<j$ will be denoted by $[i,j]$, orienting it from $i$ to $j$. Note that edges of $T_{p,q}$ have a natural linear order too, say by assigning the number $i$ to the edge $[i-1,i]$. Such ordering will be referred to as the $T_{p,q}$-ordering of edges of $T_{p,q}$. On the other hand, edges outside $T_{p,q}$, referred to as deleted edges and illustrated by the vertical dotted lines on the right hand-side of Figure \ref{maxtree}, have the form $[i,j]$, for a unique $j$ with $i+1<j$. In such a case, we will use the notation $e_i:=[i,j]$. Note that $e_i$ makes sense for integers $i$ that are smaller than $p(q-1)$ and non-$p$-divisible.

\begin{figure}
\centering
\begin{tikzpicture}[x=.8cm,y=.8cm]
\draw (0,0)--(5,0);
\node at (0,0) {$\bullet$};\node at (1,0) {$\bullet$};
\node at (2,0) {$\bullet$};\node at (3,0) {$\bullet$};
\node at (4,0) {$\bullet$};\node at (5,0) {$\bullet$};
\draw (0,1)--(5,1);
\node at (0,1) {$\bullet$};\node at (1,1) {$\bullet$};
\node at (2,1) {$\bullet$};\node at (3,1) {$\bullet$};
\node at (4,1) {$\bullet$};\node at (5,1) {$\bullet$};
\draw (0,2)--(5,2);
\node at (0,2) {$\bullet$};\node at (1,2) {$\bullet$};
\node at (2,2) {$\bullet$};\node at (3,2) {$\bullet$};
\node at (4,2) {$\bullet$};\node at (5,2) {$\bullet$};
\draw (0,3)--(5,3);
\node at (0,3) {$\bullet$};\node at (1,3) {$\bullet$};
\node at (2,3) {$\bullet$};\node at (3,3) {$\bullet$};
\node at (4,3) {$\bullet$};\node at (5,3) {$\bullet$};
\draw (0,0)--(0,3);\draw (1,0)--(1,3);\draw (2,0)--(2,3);
\draw (3,0)--(3,3);\draw (4,0)--(4,3);\draw (5,0)--(5,3);
\node at (0,-.5) {\footnotesize$1$};
\node at (1,-.5) {\footnotesize$2$};
\node at (2,-.5) {\footnotesize$3$};
\node at (3,-.5) {\footnotesize$4$};
\node at (4,-.5) {\footnotesize$5$};
\node at (5,-.5) {\footnotesize$6$};
\node at (-.5,0) {\footnotesize$1$};
\node at (-.5,1) {\footnotesize$2$};
\node at (-.5,2) {\footnotesize$3$};
\node at (-.5,3) {\footnotesize$4$};

\draw[very thick] (10,0)--(15,0);
\node at (10,0) {$\bullet$};\node at (11,0) {$\bullet$};
\node at (12,0) {$\bullet$};\node at (13,0) {$\bullet$};
\node at (14,0) {$\bullet$};\node at (15,0) {$\bullet$};
\draw[very thick] (10,1)--(15,1);
\node at (10,1) {$\bullet$};\node at (11,1) {$\bullet$};
\node at (12,1) {$\bullet$};\node at (13,1) {$\bullet$};
\node at (14,1) {$\bullet$};\node at (15,1) {$\bullet$};
\draw[very thick] (10,2)--(15,2);
\node at (10,2) {$\bullet$};\node at (11,2) {$\bullet$};
\node at (12,2) {$\bullet$};\node at (13,2) {$\bullet$};
\node at (14,2) {$\bullet$};\node at (15,2) {$\bullet$};
\draw[very thick] (10,3)--(15,3);
\node at (10,3) {$\bullet$};\node at (11,3) {$\bullet$};
\node at (12,3) {$\bullet$};\node at (13,3) {$\bullet$};
\node at (14,3) {$\bullet$};\node at (15,3) {$\bullet$};
\draw[thick,dotted] (10,0)--(10,3);
\draw[thick,dotted] (11,0)--(11,3);
\draw[thick,dotted] (12,0)--(12,3);
\draw[thick,dotted] (13,0)--(13,3);
\draw[thick,dotted] (14,0)--(14,3);
\draw[thick,dotted] (15,0)--(15,3);
\draw[very thick] (15,0)--(15,1);\draw[very thick] (15,2)--(15,3);
\draw[very thick] (10,1)--(10,2);
\node at (10,-.3) {\footnotesize$1$};
\node at (11,-.3) {\footnotesize$2$};
\node at (12,-.3) {\footnotesize$3$};
\node at (13,-.3) {\footnotesize$4$};
\node at (14,-.3) {\footnotesize$5$};
\node at (15,-.3) {\footnotesize$6$};
\node at (15.25,1) {\footnotesize$7$};
\node at (14.23,1.25) {\footnotesize$8$};
\node at (13.23,1.25) {\footnotesize$9$};
\node at (12.27,1.25) {\footnotesize$10$};
\node at (11.27,1.25) {\footnotesize$11$};
\node at (9.6,1) {\footnotesize$12$};
\node at (9.6,2) {\footnotesize$13$};
\node at (10.67,2.25) {\footnotesize$14$};
\node at (11.67,2.25) {\footnotesize$15$};
\node at (12.67,2.25) {\footnotesize$16$};
\node at (13.67,2.25) {\footnotesize$17$};
\node at (15.35,2) {\footnotesize$18$};
\node at (15.35,3) {\footnotesize$19$};
\node at (14,3.35) {\footnotesize$20$};
\node at (13,3.35) {\footnotesize$21$};
\node at (12,3.35) {\footnotesize$22$};
\node at (11,3.35) {\footnotesize$23$};
\node at (10,3.35) {\footnotesize$24$};
\end{tikzpicture}
\caption{Grid $\Gamma_{6,4}$ (left) and its maximal tree (right)}
\label{maxtree}
\end{figure}

Let $a=\{a_1,\ldots,a_n\}$ be a cell of $\UDF(\Gamma_{p,q},n)$ with ``ingredients'' $a_k$ ($1\leq k\leq n$). Thus, each $a_k$ is either a vertex $i$ or an edge $[i,j]$. A vertex ingredient $a_k$ of $a$ is said to be blocked in $a$ if either $a_k=1$ or, else, if $[a_k-1,a_k]$ intersects some cell $a_\ell$ with $\ell\neq k$. An edge ingredient $a_k$ of $a$ is said to be order disrespectful if $a_k=e_i$ for some vertex $i$. Blocked vertices and order-disrespectful edges of $a$ are said to be the critical ingredients of $a$.

\smallskip
Farley-Sabalka's gradient field $W_{p,q,n}$ on $\UDF(\Gamma_{p,q},n)$ then works as follows. A cell $a$ is $W_{p,q,n}$-critical if all its ingredients are critical. Otherwise, let $N(a)$ stand for the set of non-critical ingredients of a, and note that the $T_{p,q}$-orderings of vertices and edges of $T_{p,q}$ can be merged into a single linear ordering when restricted to $N(a)$. In these terms, $a$ is:
\begin{itemize}
\item[\emph{(i)}] $W_{p,q,n}$-redundant if the first element in $N(a)$ is a vertex ingredient $a_k=i$. The $W_{p,q,n}$-pair of $a$ is then obtained by replacing the vertex ingredient $a_k$ of $a$ by the edge ingredient $[i-1,i]$.
\item[\emph{(ii)}] $W_{p,q,n}$-collapsible if the first element in $N(a)$ is an edge ingredient $a_k=[i-1,i]$. The $W_{p,q,n}$-pair of $a$ is then obtained by replacing the edge ingredient $a_k$ of $a$ by the vertex ingredient $i$.
\end{itemize}

There is at most a single $W_{p,q,n}$-critical 0-dimensional cell, namely, the set of vertices $v_0:=\{1,2,\cdots,n\}$. In particular, $\UDF(\Gamma_{p,q},n)$ is either empty, when $n>pq$, or path connected, otherwise. Likewise, there are no positive dimensional $W_{p,q,n}$-critical cells when $\min(p,q)=1$, in which case $\UDF(\Gamma_{p,q},n)$ is contractible. So from now on we avoid trivial cases by implicitly assuming $2\leq\min(p,q)$ and $n<pq$.

\section{Discrete homotopy models}\label{homomodels}
\subsection{Disks on an infinite strip}\label{doais}
We study the topology of $\UC(n,2)$ through the homotopy equivalent complex $\ucell$ constructed by Alpert et al. We review the construction, referring the reader to \cite{MR4298668} for details. (We will have no need of the fact that there is a corresponding model for each $\UC(n,\omega)$.) 

\smallskip
The ordered configuration space $\C(n,2)$ is homotopy equivalent to a regular cell complex $\cell$ of dimension $\lfloor n/2 \rfloor$. Cells of $\cell$ are cubes, though it might happen that the intersection of two cubes is the union of several common faces. Cells of $\cell$ are indexed by  permutations $\sigma=(\sigma_1,\ldots,\sigma_n)\in\Sigma_n$ decorated by vertical bars separating some pairs of consecutive $\sigma$-values. Vertical bars are placed so that if no bar separates $\sigma_i$ from $\sigma_{i+1}$ for $i+1<n$, then a bar must separate $\sigma_{i+1}$ from $\sigma_{i+2}$. Thus, a given configuration of bars separates $\sigma$ into blocks as
$$
\left(
\sigma_{1,1},\ldots,\sigma_{\ell_1,1}\barra
\sigma_{1,2},\ldots,\sigma_{\ell_2,2}\barra\cdots\barra
\sigma_{1,k},\ldots,\sigma_{\ell_k,k}\right),
$$
where the $j$th block has size $\ell_j\leq2$. The dimension of the corresponding cell is $n-k$ or, equivalently, the sum of the dimensions of the blocks, where a size-$\ell$ block is declared to have dimension $\ell-1$. A cell of dimension $d$ is face of a cell of dimension $d+1$ provided the larger dimensional cell can be obtained from the smaller one by the operation of removing a bar and merging the two adjacent blocks by a shuffle. Since $\cell$ is regular, the given face condition fully characterizes attaching maps.

\smallskip
For a permutation $\sigma\in\Sigma_n$ and an ($n-1$)-tuple $\epsilon=(\epsilon_1,\ldots,\epsilon_{n-1})$ of zeros and ones so that $\epsilon$ has no consecutive zeros, we use the notation $e_{\sigma,\epsilon}$ for the cell of $\cell$ determined by the permutation $\sigma$ with bars placed so that there is a bar in between $\sigma_i$ and $\sigma_{i+1}$ if and only if $\epsilon_i=1$. In these terms, $\Sigma_n$ acts freely and cellularly on $\cell$ via $\tau\cdot e_{\sigma,\epsilon}:=e_{\tau\sigma,\epsilon}$. The corresponding orbit cell in $\ucell$ is denoted by $e_\epsilon$. The homotopy equivalence $\C(n,2)\simeq\cell$ in \cite{MR4298668} is $\Sigma_n$-equivariant, so the quotient complex $\ucell:=\cell/\Sigma_n$ is homotopy equivalent to $\UC(n,2)$.

\smallskip
As it will become apparent from the proof of Theorem \ref{combgen}, the cell structure of $\ucell$ is highly economical. A price to pay, though, is that $\ucell$ is no longer regular; for instance, it has a single $0$-dimensional cell but $n-1$ cells of dimension 1. The 0-dimensional cell is $e_{\epsilon(1)}$, where $\epsilon(1):=(1,1,\ldots,1)$, whereas the 1-dimensional cells have the form $e_{0_i}$ for $i\in\{1,\ldots,n-1\}$. Here $0_i:=(1,\ldots,1,0,1,\ldots,1)$, with the zero located in coordinate~$i$. Note that the end points of a representative $e_{\sigma,0_i}$ of $e_{0_i}$ are $e_{\sigma,\epsilon(1)}$ and $e_{\sigma\cdot\tau_i,\epsilon(1)}$, where $\tau_i$ is the transposition of $i$ and $i+1$. We agree to orient $e_{\sigma,0_i}$ from $e_{\sigma,\epsilon(1)}$ to $e_{\sigma\cdot\tau_i,\epsilon(1)}$. Such a choosing is $\Sigma_n$-equivariant and determines an orientation of $e_{0_i}$.

\smallskip
Likewise, 2-cells in $\ucell$ have the form $e_{0_{i,j}}$ for $i,j\in\{1,2,\ldots,n-1\}$ with
\begin{equation}\label{RAreason}
i+1<j.
\end{equation}
Here $0_{i,j}:=(1,1,\ldots,1,0,1,1,\ldots,1,0,1,1,\ldots,1)$, where the first zero is located in coordinate $i$ and the second zero is located in coordinate $j$. The best way to keep track of orientations in the pieces of the boundary of $e_{0_{i,j}}$ is in terms of some given representative. Namely, the boundary of
$$
e_{\sigma,0_{i,j}}=\left(\sigma_1\barra\cdots\barra\sigma_i,\sigma_{i+1}\barra\cdots\barra\sigma_j,\sigma_{j+1}\barra\cdots\barra\sigma_n\right)
$$
consists of the four 1-dimensional faces
\begin{itemize}
\item $e_{\sigma,0_i}=\left(\sigma_1\barra\cdots\barra\sigma_i,\sigma_{i+1}\barra\cdots\barra\sigma_j\barra\sigma_{j+1}\barra\cdots\barra\sigma_n\right)$, oriented from $e_{\sigma,\epsilon(1)}$ to $e_{\sigma\tau_i,\epsilon(1)}$;
\item $e_{\sigma\tau_j,0_i}=\left(\sigma_1\barra\cdots\barra\sigma_i,\sigma_{i+1}\barra\cdots\barra\sigma_{j+1}\barra\sigma_j\barra\cdots\barra\sigma_n\right)$, oriented from $e_{\sigma\tau_j,\epsilon(1)}$ to $e_{\sigma\tau_j\tau_i,\epsilon(1)}$;
\item $e_{\sigma,0_j}=\left(\sigma_1\barra\cdots\barra\sigma_i\barra\sigma_{i+1}\barra\cdots\barra\sigma_j,\sigma_{j+1}\barra\cdots\barra\sigma_n\right)$, oriented from $e_{\sigma,\epsilon(1)}$ to $e_{\sigma\tau_j,\epsilon(1)}$;
\item $e_{\sigma\tau_i,0_j}=\left(\sigma_1\barra\cdots\barra\sigma_{i+1}\barra\sigma_i\barra\cdots\barra\sigma_j,\sigma_{j+1}\barra\cdots\barra\sigma_n\right)$, oriented from $e_{\sigma\tau_i,\epsilon(1)}$ to $e_{\sigma\tau_i\tau_j,\epsilon(1)}$.
\end{itemize}
Note that $\tau_i\tau_j=\tau_j\tau_i$, in view of (\ref{RAreason}), so $e_{\sigma,0_{i,j}}$ is attached forming the square

\begin{equation}\label{rdc}
\begin{tikzpicture}[x=.6cm,y=.6cm]
\draw (0,0)--(0,4)--(4,4)--(4,0)--(0,0);
\node at (0,0) {$\bullet$};\node at (4,0) {$\bullet$};
\node at (0,4) {$\bullet$};\node at (4,4) {$\bullet$};
\node at (-1,-.2) {\tiny$e_{\sigma\tau_j,\epsilon(1)}$};
\node at (5.25,-.2) {\tiny$e_{\sigma\tau_i\tau_j,\epsilon(1)}$};
\node at (-.8,4.2) {\tiny$e_{\sigma,\epsilon(1)}$};
\node at (5,4.15) {\tiny$e_{\sigma\tau_i,\epsilon(1)}$};
\draw[thick,->] (0,2)--(0,1.9);\draw[thick,->] (4,2)--(4,1.9);
\draw[thick,->] (2,0)--(2.1,0);\draw[thick,->] (2,4)--(2.1,4);
\node at (2,-.5) {$e_{\sigma\tau_j,0_i}$};
\node at (2,4.5) {$e_{\sigma,0_i}$};
\node at (-.9,2) {$e_{\sigma,0_j}$};
\node at (5.2,2) {$e_{\sigma\tau_i,0_j}$};
\end{tikzpicture}
\end{equation}

\begin{proof}[Proof of Theorem \ref{combgen} for $p\geq n\geq 2$] In view of the first equivalence in (\ref{aproximacion}), it suffices to consider the case $p=\infty$. Consider the trivial (empty) gradient field on $\ucell$. As described in Subsection \ref{dmtatofg}, $B_n(2):=B_n(\infty\times2)$ has generators $e_{0_i}$ for $1\leq i\leq n-1$ subject to the relations given by the boundary words of the 2-cells $e_{0_{i,j}}$ for $i,j\in\{1,2,\ldots,n-1\}$ with $i+1<j$. Since the gradient field in use is empty, each word is already in its reduced form. Moreover, as it can be seen from (\ref{rdc}) by passing to the quotient, the boundary word determined by $e_{0_{i,j}}$ is given by the commutator of $e_{0_i}$ and $e_{0_j}$. The result follows.
\end{proof}

\subsection{Squares in a rectangle}\label{siar}
Let $\puz_{p,q}$ be the cubical complex obtained by restricting to $[1,p]\times[1,q]$ the canonical 2-dimensional cube-complex structure of the plane. Thus, the graph $\Gamma_{p,q}$ in Subsection~\ref{subsectionFSA} is the 1-dimensional skeleton of $\puz_{p,q}$. Following \cite{MR4640135}, the discrete configuration space $\DF(\puz_{p,q},n)$ will be denoted by $\Xnpq$, while $\UDF(\puz_{p,q},n)$ will be denoted by $\UXnpq$. Alpert et al showed that $\Xnpq$ sits naturally inside $\C(n,p\times q)$ as a strong deformation retract. Their proof argument is $\Sigma_n$-equivariant and, by passing to the quotient, we get:

\begin{proposition}\label{UXmodel}
$\UXnpq$ is homotopy equivalent to $\UC(n,p\times q)$.
\end{proposition}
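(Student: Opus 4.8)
The plan is to derive the unordered statement from its ordered counterpart due to Alpert et al.~by passing to $\Sigma_n$-quotients, exploiting that both relevant $\Sigma_n$-actions are free.

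First I would record the ordered input precisely. Alpert et al.~\cite{MR4640135} exhibit $\Xnpq$ as a strong deformation retract of $\C(n,p\times q)$; that is, they produce a homotopy $H\colon\C(n,p\times q)\times[0,1]\to\C(n,p\times q)$ with $H_0=\mathrm{id}$, $H_1\bigl(\C(n,p\times q)\bigr)\subseteq\Xnpq$, and $H_t|_{\Xnpq}=\mathrm{id}_{\Xnpq}$ for all $t$. The crucial feature I would extract from their construction is that $H$ can be taken $\Sigma_n$-equivariant: $H_t(\tau\cdot x)=\tau\cdot H_t(x)$ for all $\tau\in\Sigma_n$. This is plausible because the retraction adjusts the labelled squares by canonical local moves that are insensitive to the labels, so relabelling commutes with retracting; checking this by inspecting their argument is the one genuinely non-formal step.

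Next I would use freeness. On $\C(n,p\times q)$ the action of $\Sigma_n$ permuting labels is free because the $n$ unit squares occupy pairwise non-overlapping positions, so no nontrivial permutation fixes a configuration; on $\Xnpq=\DF(\puz_{p,q},n)$ it is free for the same reason, this being exactly the freeness already invoked in Subsection~\ref{subsectionUDF} to define $\UDF$. Since $\Xnpq\subseteq\C(n,p\times q)$ is a $\Sigma_n$-invariant subspace, the orbit projections $\pi\colon\C(n,p\times q)\to\UC(n,p\times q)$ and $\pi'=\pi|_{\Xnpq}\colon\Xnpq\to\UXnpq$ are the defining quotient maps, realizing $\UXnpq$ as a subspace of $\UC(n,p\times q)$.

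Finally I would descend $H$. By equivariance the formula $\overline{H}_t([x]):=[H_t(x)]$ is well defined on orbits, and it is continuous because $\pi\times\mathrm{id}_{[0,1]}$ is a quotient map (being the product of a quotient map with the locally compact space $[0,1]$), while $\overline{H}\circ(\pi\times\mathrm{id}_{[0,1]})=\pi\circ H$ is continuous. The three defining properties of $H$ pass verbatim to $\overline{H}$, so $\overline{H}$ is a strong deformation retraction of $\UC(n,p\times q)$ onto $\UXnpq$, which in particular yields the asserted homotopy equivalence. The main obstacle is the equivariance claim of the second step; everything after it is the standard fact that $\Sigma_n$-equivariant strong deformation retractions descend along free group quotients.
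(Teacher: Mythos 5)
Your proposal is correct and follows exactly the paper's own route: the paper likewise cites Alpert et al.'s strong deformation retraction of $\C(n,p\times q)$ onto $\Xnpq$, observes that their argument is $\Sigma_n$-equivariant, and passes to the quotient. You merely spell out the descent in more detail (and the freeness of the actions, while true, is not actually needed for that descent step---equivariance alone suffices).
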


From this point on we work with the discrete model $\UXnpq$, rather than with the actual space $\UC(n,p\times q)$. In particular, $B_n(p\times q)$ will now stand for the fundamental group of $\UXnpq$.

\smallskip
As observed in Remark \ref{extension}, $\UDF(\Gamma_{p,q},n)$ is a subcomplex of $\UXnpq$, so Farley-Sabalka's gradient field $W_{p,q,n}$ on the former complex is also a gradient field on the latter complex. Redundant and collapsible cells in the larger complex are described in Subsection~\ref{subsectionFSA}, while critical cells are those of the smaller complex together with all the cells outside $\UDF(\Gamma_{p,q},n)$. Neither the cell structure on $\UXnpq$ nor the corresponding gradient field $W_{p,q,n}$ are as efficient as those in the previous subsection; however, they are sufficiently explicit to permit a complete analysis of the fundamental groups in Therems~\ref{1hueco}, \ref{2huecos}, and \ref{combgen}. 

\begin{remark}\label{explicacion}{\em
A cell $c=\{e_1,\ldots,e_n\}$ of $\UXnpq$ lies outside $\UDF(\Gamma_{p,q},n)$ if and only if one of the ingredients $e_i$ is a square of $\puz_{p,q}$, in which case $\dim(c)\geq2$. Consequently $\UDF(\Gamma_{p,q},n)$ and $\UXnpq$ share 1-skeleta, so that the fact that $\UXnpq$ is path-connected follows from the corresponding property for $\UDF(\Gamma_{p,q},n)$ (\cite[Theorem~2.6]{MR2701024}). In fact, $\UDF(\Gamma_{p,q},n)=\UXnpq$ when $n\geq pq-2$, for then there is no room for a square-type ingredient. In particular, Theorem~\ref{Eilenbergmaclane} gives the asphericity assertion in the case of the first condition in (\ref{casosaesfericos}).
}\end{remark}

\begin{proof}[Proof of Theorem \ref{1hueco}]
Theorem \ref{UXmodel}, Remark \ref{explicacion}, and Proposition \ref{dualidad} yield
$$
\UC(pq-1,p\times q)\simeq\UX(pq-1,p\times q)=\UDF(\Gamma_{p,q},pq-1)\cong\UDF(\Gamma_{p,q},1)=\Gamma_{p,q}.
$$
The result follows since the Euler characteristic of $\Gamma_{p,q}$ is $1-(p-1)(q-1)$.
\end{proof}

We assume $2\leq n\leq pq-2$ throughout the rest of the paper.

\section{\texorpdfstring{A raw presentation for $B_n(p\times q)$ via Farley-Sabalka's field}{Raw presentation}}\label{rawpresentation}

Recall that the $W_{p,q,n}$-critical $d$-cells of $\UF(\Gamma_{p,q},n)$ are given by cardinality-$n$ sets $c$ consisting of $d$ deleted edges of $\Gamma_{p,q}$ and $n-d$ vertices of $\Gamma_{p,q}$ which are blocked in $c$. The same description applies for $W_{p,q,n}$-critical $d$-cells of $\UXnpq$ except that, in addition, all cells $c=\{c_1,\ldots,c_n\}$ of $\UXnpq$ outside $\UF(\Gamma_{p,q},n)$, i.e., where some $c_i$ is a square (so $d\geq2$), are $W_{p,q,n}$-critical too.

\smallskip
As reviewed in Subsection \ref{subsectionFSA}, a 1-cell $c=\{c_1,\ldots,c_n\}$ (either of $\UDF(\Gamma_{p,q},n)$ or $\UXnpq$, as both complexes share 1-skeleton), say with edge ingredient $c_1=[i,j]$ (so $i<j$), is $W_{p,q,n}$-collapsible if and only if $j=i+1$ ---i.e., $[i,j]$ is an edge of $T_{p,q}$--- and all vertex ingredients $k$ in $c$ with $k<i$ are blocked in $c$. In such a case, the $W_{p,q,n}$-pair of $c$ is the 0-cell $\{i+1,c_2,\dots,c_n\}$. On the other hand, there are two types of $W_{p,q,n}$-redundant 1-cells $c=\{c_1,\ldots,c_n\}$:
\begin{itemize}
\item those with a deleted-edge ingredient and (at least) a vertex ingredient, say $c_1=k$, which is unblocked in $c$, and 
\item those with a non-deleted-edge ingredient $[i,i+1]$ and (at least) a vertex ingredient, say $c_1=k$ with $k<i$, which is unblocked in $c$.
\end{itemize}
In both cases the $W_{p,q,n}$-pair of $c$ is the 2-cell $\{[k-1,k],c_2,\ldots,c_n\}$ provided $k$ is minimal possible satisfying the conditions above.

\smallskip
As reviewed in Subsection \ref{dmtatofg}, generators in the $W_{p,q,n}$-based Morse-Poincar\'e presentation for $B_n(p\times q)$ are given by critical 1-cells, for which we introduce a manageable notation in Definition \ref{notacion} below. Then, using Lemma \ref{flowdynamics} (which is a direct consequence of the construction of $W_{p,q,n}$), we will spell out the relations among the generators. Relations will be classified into four types of reduced forms of boundaries $\partial(c)$ of critical 2-cells $c$.
 
\begin{definition}\label{notacion}
\begin{itemize}
\item For an edge $e=[i,j]$, $i<j$, and non-negative integers $r,s,t$ satisfying the conditions
$$
0\leq r<i,\quad0\leq s<j-i,\quad0\leq t\leq pq-j\quad \text{and}\quad r+s+t=n-1
$$
we use the generic notation $e(r,s,t)$ for any 1-cell of $\UXnpq$ whose edge ingredient is $e$ and has $r$ vertex ingredients $\ell$ with $\ell<i$, $s$ vertex ingredients $\ell$ with $i<\ell<j$, and $t$ vertex ingredients $\ell$ with $j<\ell$.
\item For a deleted edge $e_i$, the unique $W_{p,q,n}$-critical cell of the form $e_i(r,s,t)$ will be denoted by $\varepsilon_i(r,s,t)$.
\end{itemize}
\end{definition}

\begin{lemma}\label{flowdynamics}\hspace{0em}
\begin{enumerate}
\item If the edge ingredient of a 1-cell $c$ is non-deleted, then $c$ is either $W_{p,q,n}$-collapsible or $W_{p,q,n}$-redundant. In the latter case, for any path $c\nearrow W(c)\searrow d$, the edge ingredient of $d$ is also non-deleted. Thus, no upper gradient path $c\nearrow W(c)\searrow d\nearrow\cdots\searrow e$ can end at a $W_{p,q,n}$-critical 1-cell $e$.
\item Assume that the edge ingredient of a 1-dimensional cell $c$ is deleted. Then $c$ is either $W_{p,q,n}$-critical or $W_{p,q,n}$-redundant. Furthermore there is a unique $W_{p,q,n}$-gradient path $c\nearrow W(c)\searrow d\nearrow\cdots\searrow e$ (of length~0, when $c$ is $W_{p,q,n}$-critical) ending at some $W_{p,q,n}$-critical 1-cell $e$. In this $W_{p,q,n}$-gradient path, all non-blocked vertices of $c$ \emph{fall}, one by one starting from the smallest one, through the maximal tree $T_{p,q}$ towards its root util reaching a position blocked in $c$.
\end{enumerate}
\end{lemma}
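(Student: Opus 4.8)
The plan is to read off both statements from the explicit descriptions of collapsible and redundant $1$-cells recorded just before the lemma, together with a bookkeeping of what happens to the single edge ingredient of a cell as one moves along a gradient path. Throughout I track this edge ingredient and exploit the fact, immediate from Subsection~\ref{subsectionFSA}, that a deleted edge is an order-disrespectful (hence critical) ingredient, whereas a tree edge $[i,i+1]$ is always order-respectful (hence non-critical).

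For part~1, suppose the edge ingredient of $c$ is a tree edge $[i,i+1]$. Since this ingredient is non-critical, $N(c)$ is nonempty, so $c$ cannot be critical; comparing positions in the merged $T_{p,q}$-ordering shows that $c$ is collapsible exactly when every vertex ingredient $k<i$ is blocked, and redundant otherwise, matching the description in Section~\ref{rawpresentation}. In the redundant case $W(c)$ replaces the minimal unblocked vertex $k<i$ by the tree edge $[k-1,k]$, so $W(c)$ is a $2$-cell with \emph{two} tree-edge ingredients. Its $1$-dimensional faces are obtained by replacing one of these tree edges by one of its endpoints; the choice that returns the vertex $k$ recovers $c$, while every other face $d\neq c$ still carries a tree edge as its edge ingredient. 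Thus any down-step $W(c)\searrow d$ lands on a $1$-cell with non-deleted edge ingredient, and such a cell is again never critical. Iterating this observation along $c\nearrow W(c)\searrow d\nearrow\cdots\searrow e$ shows that every $1$-cell visited keeps a non-deleted edge ingredient, so the terminal cell $e$ cannot be critical, proving part~1.

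For part~2, suppose instead that the edge ingredient of $c$ is a deleted edge $e_i$. Being order-disrespectful, $e_i$ is a critical ingredient, so the only possible non-critical ingredients of $c$ are unblocked vertices; hence $c$ is critical when all its vertices are blocked (the gradient path has length~$0$), and otherwise $c$ is redundant, case~(i), with $W(c)$ replacing the minimal unblocked vertex $k$ by the tree edge $[k-1,k]$. The key point is that $W(c)=\{e_i,[k-1,k],\text{rest}\}$ now carries exactly one tree edge and one deleted edge. Among its faces $d\neq c$, collapsing $[k-1,k]$ to its lower endpoint yields $\{e_i,k-1,\text{rest}\}$, whose edge ingredient is still the deleted edge $e_i$; every remaining face replaces $e_i$ by one of its endpoints and therefore carries a tree edge. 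By part~1, those latter branches can never reach a critical $1$-cell, so the only continuation of a gradient path toward a critical cell is forced: one must collapse the freshly created tree edge, which geometrically \emph{drops} the vertex $k$ one step down the tree to $k-1$.

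Iterating, the deleted edge $e_i$ is preserved while at each stage the current minimal unblocked vertex falls by one tree step, which is exactly the ``one by one, smallest first'' dynamics asserted in the statement. Uniqueness of the gradient path ending at a critical cell follows because at every $2$-cell encountered exactly one down-step preserves the deleted edge, all others being dead ends by part~1. Finally the process terminates: vertices can only decrease and the root vertex $1$ is always blocked, so after finitely many falls every vertex is blocked and the path reaches the critical $1$-cell $e=\{e_i,\text{blocked vertices}\}$ (equivalently, termination is guaranteed by acyclicity and finiteness of $W_{p,q,n}$). The main thing to verify carefully is precisely this ``exactly one good branch'' dichotomy and the resulting falling interpretation; once part~1 is in hand it turns every wrong branch into a provable dead end, which is what simultaneously pins down the uniqueness and the explicit description in part~2.
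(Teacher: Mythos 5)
Your proof is correct, and it is essentially the argument the paper has in mind: the paper never writes out a proof of this lemma, asserting only that it ``is a direct consequence of the construction of $W_{p,q,n}$,'' and your face-by-face bookkeeping of the $W$-pairs --- tracking whether the surviving edge ingredient is a tree edge or a deleted edge, and observing that exactly one codimension-one face of each $2$-cell $W(c)$ retains the deleted edge while the other two are dead ends by part~1 --- is precisely the implicit argument, the same dichotomy the paper later exploits in Figure~\ref{flow} and the proof of Corollary~\ref{reducidas}. One point in your favor is worth recording: your formulation of the dynamics (``at each stage the \emph{current} minimal unblocked vertex falls by one tree step'') is actually more precise than the lemma's own wording. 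A vertex that is blocked in $c$ can become unblocked after the vertex blocking it falls away, and then it falls too; e.g., in $\Gamma_{3,2}$ with $e_1=[1,6]$, the cell $c=\{e_1,3,4\}$ has $4$ blocked (by $3$), yet the unique gradient path is $\{e_1,3,4\}\nearrow\{e_1,[2,3],4\}\searrow\{e_1,2,4\}\nearrow\{e_1,2,[3,4]\}\searrow\{e_1,2,3\}$, in which $4$ falls to $3$. So the paper's phrase ``all non-blocked vertices of $c$ fall \dots until reaching a position blocked in $c$'' should be read with ``blocked in the current cell,'' exactly as your iteration does; this looseness is harmless for the way the lemma is used (the endpoint is still the unique critical cell $\varepsilon_i(r,s,t)$ with the same distribution of vertices relative to $e_i$), and your proof handles it correctly.
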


\begin{corollary}\label{reducidas}
Any 1-cell $c$ of $\UXnpq$ having non-deleted edge ingredient has trivial reduced expression. On the other hand, the reduced expression of a generic cell $e_i(r,s,t)$ is $\varepsilon_i(r,s,t)$.
\end{corollary}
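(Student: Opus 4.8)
The plan is to read off the reduced expression of a 1-cell directly from the flow dynamics recorded in Lemma \ref{flowdynamics}, interpreting the reduction algorithm of Subsection \ref{dmtatofg} in terms of $W_{p,q,n}$-gradient paths. Recall that this algorithm replaces each redundant letter by the complementary boundary word of its $W_{p,q,n}$-pair and erases each collapsible letter, terminating (by \cite[Proposition~2.4]{MR2171804}) in a word whose letters are $W_{p,q,n}$-critical 1-cells; every such surviving letter is produced along a $W_{p,q,n}$-gradient path $c\nearrow W(c)\searrow d\nearrow\cdots\searrow e$ issuing from $c$ and ending at the corresponding critical 1-cell $e$. The key structural fact I would use throughout is that, as recorded in Section \ref{rawpresentation}, every $W_{p,q,n}$-critical 1-cell has a deleted edge ingredient. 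Thus the whole computation splits according to whether the edge ingredient of $c$ is deleted or not.

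For the first assertion, suppose the edge ingredient of $c$ is non-deleted. By Lemma \ref{flowdynamics}(1), $c$ is either collapsible---in which case it is deleted outright and the reduced word is already empty---or redundant, and in the latter case every 1-cell $d$ arising along any path $c\nearrow W(c)\searrow d\nearrow\cdots$ again has a non-deleted edge ingredient. Such a cell is therefore collapsible or redundant, and never critical. Since $W_{p,q,n}$ is a gradient field on a finite complex there are no cycles, so every branch of the reduction tree must terminate, and---no critical cell being reachable---it can only terminate at a collapsible cell, which is erased. Hence the reduced expression of $c$ is the empty word.

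For the second assertion, write $c=e_i(r,s,t)$, whose edge ingredient is the deleted edge $e_i$. By Lemma \ref{flowdynamics}(2), $c$ is either $W_{p,q,n}$-critical---in which case $c=\varepsilon_i(r,s,t)$ is its own reduced form---or $W_{p,q,n}$-redundant, in which case there is a \emph{unique} gradient path from $c$ to a critical 1-cell $e$. This uniqueness forces exactly one critical letter to survive the reduction, with multiplicity one, so the reduced expression is a single generator. It remains to identify $e$ with $\varepsilon_i(r,s,t)$. The falling described in Lemma \ref{flowdynamics}(2) fixes the deleted edge ingredient $e_i$ and merely slides the unblocked vertices of $c$ down the tree $T_{p,q}$ toward the root until they become blocked; since the two endpoints of $e_i$ are themselves blocking, no falling vertex can cross them. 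Consequently the three counts $r,s,t$ of vertices lying below, between, and above the endpoints of $e_i$ are preserved, so $e$ is of the form $e_i(r,s,t)$ and, being critical, equals the unique such critical cell $\varepsilon_i(r,s,t)$.

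The hard part will be the passage from the reduction algorithm---which a priori expands each redundant letter over all three remaining boundary edges of its $W_{p,q,n}$-pair, and could thus branch and emit several critical letters subject to cancellation---to the clean single-letter conclusion. This is exactly what the uniqueness clause of Lemma \ref{flowdynamics}(2) is designed to resolve: among the branches only one reaches a critical 1-cell, the others dying at collapsible cells. The only remaining bookkeeping is the sign of the surviving generator; here one checks, using the coherent-orientation convention of Figure \ref{coherent} together with the orientations fixed in Subsection \ref{dmtatofg} and the fact that the edge $e_i=[i,j]$ keeps its orientation throughout the falling, that $\varepsilon_i(r,s,t)$ appears with exponent $+1$ rather than as its inverse. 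This makes the reduced expression precisely $\varepsilon_i(r,s,t)$.
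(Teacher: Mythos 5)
Your proof is correct and takes essentially the same route as the paper's: both assertions are extracted from Lemma \ref{flowdynamics}, using the first assertion to kill all branches whose cells have non-deleted edge ingredients, and using the uniqueness of the gradient path in item 2, together with the simple-collapse orientation conventions, to leave a single surviving generator with preserved counts $(r,s,t)$. The only difference is one of emphasis: the paper explicitly performs the sign verification you defer (via Figure \ref{flow}, showing that at each falling step the left-hand edge of the $W_{p,q,n}$-pair enters the replacement word with exponent $+1$ precisely because it carries the same orientation of $e_i$ as the cell being replaced), whereas you correctly name the needed ingredients but leave that check to the reader.
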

\begin{proof}
The first assertion follows from item 1 in Lemma \ref{flowdynamics}. Likewise, the second assertion is a direct consequence of item 2 in the lemma, although orientation issues need to be carefully accounted this time. Namely, consider a $W_{p,q,n}$-redundant 1-cell $e_i(r,s,t)$ with smallest unblocked vertex $\ell$. The situation with $\ell<i$ is depicted on the left hand-side of Figure \ref{flow}, but the argument below works just as well in general. 

\begin{figure}[h!]
\centering
\begin{tikzpicture}[x=.6cm,y=.6cm]
\draw(1,0)--(9,0)--(9,2)--(1,2);
\node at (.5,0) {$\cdots$};\node at (.5,2) {$\cdots$};
\node at (3,0) {$\bullet$};\node at (5.5,0) {$\bullet$};
\node at (5.5,2) {$\bullet$};
\node at (5.5,2.5) {\footnotesize$j$};
\node at (3,-.4) {\footnotesize$\ell$};
\node at (5.5,-.4) {\footnotesize$i$};
\draw[thick,dotted] (5.5,0)--(5.5,2);

\draw (14,0)--(16,0)--(16,2)--(14,2)--(14,0);
\draw[very thin,->] (15.8,-1.1)--(12.9,-1.1)--(12.9,3)--(15.8,3);
\node at (15.9,-.3) {\footnotesize$\ell$};
\node at (14.4,-.3) {\footnotesize$\ell{-}1$};
\draw[thick,dotted,->] (15,-.3)--(15.57,-.3);
\draw[thick,dotted,->] (14.3,2.3)--(15.7,2.3);
\node at (13.75,.2) {\footnotesize$i$};
\node at (13.75,1.8) {\footnotesize$j$};
\draw[thick,dotted,->] (13.75,.65)--(13.75,1.35);
\draw[thick,dotted,->] (16.25,.27)--(16.25,1.65);
\node at (17.5,1) {\footnotesize$e_i(r,s,t)$};
\draw[dashed,->] (15.7,1)--(14.3,1);
\draw[dashed,->] (15.7,1.4)--(14.8,1.7);
\draw[dashed,->] (15.7,.6)--(14.8,.3);
\end{tikzpicture}
\caption{A typical step in the construction of reduced forms}
\label{flow}
\end{figure}

Ignoring zero dimensional ingredients (vertices), the square on the right hand-side of Figure~\ref{flow} represents the 2-dimensional $W_{p,q,n}$-pair of $e_i(r,s,t)$. Here dashed arrows indicate the associated simple collapse, and dotted arrows indicate the selected orientations for the relevant edges. In this terms, the right hand-side edge of the square stands for our redundant cell $e_i(r,s,t)$, which is then replaced in the reduction process of Subsection \ref{dmtatofg} by the concatenation of (suitable $\pm1$-powers of) the three paths accounted for by the external light arrow. The top and bottom edges of the square can be neglected in view of the first sentence of this corollary. Therefore our original 1-cell $e_i(r,s,t)$ is replaced by a new version of $e_i(r,s,t)$ (with exponent 1, and not $-1$) in which the original smallest unblocked vertex $\ell$ has ``fallen down'' one slot. The result follows after iterating the process indicated by the $W_{p,q,n}$-gradient path in item 2 of Lemma \ref{flowdynamics} until reaching its $W_{p,q,n}$-critical end.
\end{proof}

Recall there are two types of $W_{p,q,n}$-critical 2-cells in $\UXnpq$. The ones coming from $\UF(\Gamma_{p,q},n)$, which are sets consisting of two deleted edges and $n-2$ blocked vertices, and the ones outside $\UDF(\Gamma_{p,q},n)$, which are sets consisting of a square and $n-1$ (non-necessarily blocked) vertices. We describe next the reduced form of the corresponding boundaries.

\medskip\noindent {\bf Case 1}.
Let $c$ be a $W_{p,q,n}$-critical 2-cell coming from $\UF(\Gamma_{p,q},n)$ with deleted edge ingredients $e_i=[i,j]$ and $e_{i'}=[i',j']$, say with $i<i'$. Assume in addition $j'<j$. As indicated on the left hand-side of Figure \ref{caso1}, let $r$ be the number of vertex ingredients $\ell$ in $c$ with $\ell<i$, $s$ the number of vertex ingredients $\ell$ in $c$ with $i<\ell<i'$, $t$ the number of vertex ingredients $\ell$ in $c$ with $i'<\ell<j'$, $u$ the number of vertex ingredients $\ell$ in $c$ with $j'<\ell<j$, and $v$ the number of vertex ingredients $\ell$ in $c$ with $j<\ell$. Thus $r+s+t+u+v=n-2$. As before, the square on the right hand-side of Figure~\ref{caso1} represents $c$, with straight arrows indicating orientations of the four faces of the square. The boundary $\partial(c)$ is given by the concatenation of the four faces, say by following the inner cycle. By Corollary \ref{reducidas}, the resulting relation in $B_n(p\times q)$ is
\begin{equation}\label{relation1}
\varepsilon_i(r,s+t+u+1,v) \cdot
\varepsilon_{i'}(r+s,t,u+v+1) \cdot
\overline{\varepsilon_i(r,s+t+u+1,v)} \cdot
\overline{\varepsilon_{i'}(r+s+1,t,u+v)}.
\end{equation}
Here and throughout the paper we write $\overline{x}$ for the inverse of $x$.

\begin{figure}[h!]
\centering
\begin{tikzpicture}[x=.6cm,y=.6cm]
\draw(1,0)--(9,0)--(9,2)--(1,2);
\node at (.5,0) {$\cdots$};\node at (.5,2) {$\cdots$};
\node at (3,0) {$\bullet$};\node at (5.5,0) {$\bullet$};
\node at (5.5,2) {$\bullet$};\node at (3,2) {$\bullet$};
\node at (5.5,2.5) {\footnotesize$j'$};
\node at (3,-.4) {\footnotesize$i$};
\node at (3,2.5) {\footnotesize$j$};
\node at (5.5,-.4) {\footnotesize$i'$};
\draw[thick,dotted] (5.5,0)--(5.5,2);
\draw[thick,dotted] (3,0)--(3,2);
\node at (4.2,.4) {\footnotesize$s$};
\draw[->] (4.5,.4)--(5.1,.4);\draw[->] (3.9,.4)--(3.25,.4);
\node at (1.8,.4) {\footnotesize$r$};
\draw[->] (2.1,.4)--(2.6,.4);\draw[->] (1.45,.4)--(.8,.4);
\node at (1.8,1.6) {\footnotesize$v$};
\draw[->] (2.1,1.6)--(2.6,1.6);\draw[->] (1.45,1.6)--(.8,1.6);
\node at (4.2,1.6) {\footnotesize$u$};
\draw[->] (4.5,1.6)--(5.1,1.6);\draw[->] (3.9,1.6)--(3.25,1.6);
\node at (8,.95) {\footnotesize$t$};
\draw (6.2,.3) arc(270:450:1.5cm and .4cm);
\draw[->] (6.3,1.633)--(5.9,1.633);
\draw[->] (6.3,.3)--(5.9,.3);

\draw (18.66-1,.4) arc(240:555:.4cm and .4cm);
\draw[->] (18.344-1,.82)--(18.4-1,.68);
\draw (18-1,0)--(20-1,0)--(20-1,2)--(18-1,2)--(18-1,0);
\node at (19.9-1,-.29) {\footnotesize$j$};
\node at (18.1-1,-.3) {\footnotesize$i$};
\draw[->] (4+14.5-1,-.3)--(4+15.5-1,-.3);
\draw[->] (4+14.5-1,2.3)--(4+15.5-1,2.3);
\node at (4+13.75-1,.2) {\footnotesize$i'$};
\node at (4+13.75-1,1.8) {\footnotesize$j'$};
\draw[->] (4+13.65-1,.6)--(4+13.65-1,1.3);
\draw[->] (4+16.3-1,.6)--(4+16.3-1,1.3);
\node at (4+19.15-1,.9) {\footnotesize$e_{i'}(r+s,t,u+v+1)$};
\node at (14.8-1,.9) {\footnotesize$e_{i'}(r+s+1,t,u+v)$};
\node at (19-1,2.76) {\footnotesize$e_{i}(r,s+t+u+1,v)$};
\node at (19-1,-.87) {\footnotesize$e_{i}(r,s+t+u+1,v)$};
\end{tikzpicture}
\caption{$W_{p,q,n}$-critical 2-cell in Case 1 (left) and the resulting relation (right)}
\label{caso1}
\end{figure}

\begin{figure}
\centering
\begin{tikzpicture}[x=.6cm,y=.6cm]
\draw(1,0)--(9,0)--(9,2)--(0,2)--(0,4)--(8,4);
\node at (.5,0) {$\cdots$};\node at (8.7,4) {$\cdots$};
\node at (3,0) {$\bullet$};\node at (5.5,4) {$\bullet$};
\node at (5.5,2) {$\bullet$};\node at (3,2) {$\bullet$};
\node at (5.85,2.5) {\footnotesize$i'$};
\node at (3,-.4) {\footnotesize$i$};
\node at (2.7,1.56) {\footnotesize$j$};
\node at (5.6,4.55) {\footnotesize$j'$};
\draw[thick,dotted] (3,0)--(3,2);\draw[thick,dotted] (5.5,2)--(5.5,4);
\draw[->] (6.22,.3)--(3.25,.3);
\node at (1.8,.4) {\footnotesize$r$};
\draw[->] (2.1,.4)--(2.6,.4);
\draw[->] (1.45,.4)--(.8,.4);
\node at (4.2,1.6) {\footnotesize$t$};
\draw[->] (4.5,1.6)--(5.1,1.6);\draw[->] (3.9,1.6)--(3.25,1.6);
\node at (8,.95) {\footnotesize$s$};
\draw (6.2,.3) arc(270:450:1.5cm and .4cm);
\draw[->] (6.3,1.633)--(5.9,1.633);
\draw (2.8,2.354) arc(270:90:1.5cm and .4cm);
\draw[->] (2.8,3.687)--(5.3,3.687);
\draw[->] (2.8,2.356)--(2.81,2.356);
\node at (1.2,3) {\footnotesize$u$};
\draw[->] (6.4,3.687)--(5.7,3.687);
\node at (6.75,3.687) {\footnotesize$v$};
\draw[->] (7,3.687)--(7.8,3.687);

\draw (18.66-1,.4+1) arc(240:555:.4cm and .4cm);
\draw[->] (18.344-1,.82+1)--(18.4-1,.68+1);
\draw (18-1,0+1)--(20-1,0+1)--(20-1,2+1)--(18-1,2+1)--(18-1,0+1);
\node at (19.9-1,-.29+1) {\footnotesize$j$};
\node at (18.1-1,-.3+1) {\footnotesize$i$};
\draw[->] (4+14.5-1,-.3+1)--(4+15.5-1,-.3+1);
\draw[->] (4+14.5-1,2.3+1)--(4+15.5-1,2.3+1);
\node at (4+13.75-1,.2+1) {\footnotesize$i'$};
\node at (4+13.75-1,1.8+1) {\footnotesize$j'$};
\draw[->] (4+13.65-1,.6+1)--(4+13.65-1,2.3);
\draw[->] (4+16.3-1,.6+1)--(4+16.3-1,1.3+1);
\node at (4+19.15-1,.9+1) {\footnotesize$e_{i'}(r+s,t+u+1,v)$};
\node at (14.8-1,.9+1) {\footnotesize$e_{i'}(r+s+1,t+u,v)$};
\node at (19-1,2.76+1) {\footnotesize$e_{i}(r,s+t,u+v+1)$};
\node at (19-1,-.87+1) {\footnotesize$e_{i}(r,s+t+1,u+v)$};
\end{tikzpicture}
\caption{$W_{p,q,n}$-critical 2-cell in Case 2 (left) and the resulting relation (right)}
\label{caso2}
\end{figure}

\medskip\noindent{\bf Case 2}. Let $c$ be a $W_{p,q,n}$-critical 2-cell coming from $\UF(\Gamma_{p,q},n)$ with deleted edge ingredients $e_i=[i,j]$ and $e_{i'}=[i',j']$, say with $i<i'$. Assume now $j'>j>i'$ and consider the corresponding five non-negative integers $r,s,t,u,v$ depicted on the left hand-side of Figure \ref{caso2} that add up to $n-2$ and describe the distribution of blocked vertices in $c$. This time the resulting relation in $B_n(p\times q)$ is
\begin{equation}\label{relation2}
\varepsilon_i(r,s+t+1,u+v) \cdot
\varepsilon_{i'}(r+s,t+u+1,v) \cdot
\overline{\varepsilon_i(r,s+t,u+v+1)} \cdot
\overline{\varepsilon_{i'}(r+s+1,t+u,v)}.
\end{equation}

\noindent{\bf Case 3}. Let $c$ be a $W_{p,q,n}$-critical 2-cell coming from $\UF(\Gamma_{p,q},n)$ with deleted edge ingredients $e_i=[i,j]$ and $e_{i'}=[i',j']$, say with $i<i'$. The remaining situation for this type of cells has $j<i'$. Consider the corresponding five non-negative integers $r,s,t,u,v$ depicted on the left hand-side of Figure \ref{caso3} that add up to $n-2$ and describe the distribution of blocked vertices in $c$. The resulting relation in $B_n(p\times q)$ is now
\begin{equation}\label{relation3}
\varepsilon_i(r,s,t+u+v+1) \cdot
\varepsilon_{i'}(r+s+t+1,u,v) \cdot
\overline{\varepsilon_i(r,s,t+u+v+1)} \cdot
\overline{\varepsilon_{i'}(r+s+t+1,u,v)}.
\end{equation}

\begin{figure}[h!]
\centering
\begin{tikzpicture}[x=.6cm,y=.6cm]
\draw(1,0)--(9,0)--(9,2)--(0,2)--(0,4)--(8,4);
\node at (.5,0) {$\cdots$};\node at (8.7,4) {$\cdots$};
\node at (5.5,0) {$\bullet$};\node at (3,4) {$\bullet$};
\node at (5.5,2) {$\bullet$};\node at (3,2) {$\bullet$};
\node at (3.4,2.5) {\footnotesize$i'$};
\node at (5.5,-.4) {\footnotesize$i$};
\node at (5.5,2.5) {\footnotesize$j$};
\node at (3.1,4.55) {\footnotesize$j'$};
\draw[thick,dotted] (3,2)--(3,4);\draw[thick,dotted] (5.5,0)--(5.5,2);
\node at (1.8,.3) {\footnotesize$r$};
\draw[->] (2.1,.3)--(5.1,.3);
\draw[->] (1.45,.3)--(.8,.3);
\node at (4.2,1.6) {\footnotesize$t$};
\draw[->] (4.5,1.6)--(5.1,1.6);\draw[->] (3.9,1.6)--(3.25,1.6);
\node at (8,.95) {\footnotesize$s$};
\draw (6.2,.3) arc(270:450:1.5cm and .4cm);
\draw[->] (6.3,1.633)--(5.9,1.633);
\draw[->] (6.3,.3)--(5.9,.3);
\draw (2.8,2.354) arc(270:90:1.5cm and .4cm);
\draw[->] (6.35,3.687)--(3.2,3.687);
\draw[->] (2.8,2.356)--(2.81,2.356);
\draw[->] (2.8,3.686)--(2.81,3.686);
\node at (1.2,3) {\footnotesize$u$};
\node at (6.7,3.687) {\footnotesize$v$};
\draw[->] (7,3.687)--(7.8,3.687);

\draw (18.66-1,.4+1) arc(240:555:.4cm and .4cm);
\draw[->] (18.344-1,.82+1)--(18.4-1,.68+1);
\draw (18-1,0+1)--(20-1,0+1)--(20-1,2+1)--(18-1,2+1)--(18-1,0+1);
\node at (19.9-1,-.29+1) {\footnotesize$j$};
\node at (18.1-1,-.3+1) {\footnotesize$i$};
\draw[->] (4+14.5-1,-.3+1)--(4+15.5-1,-.3+1);
\draw[->] (4+14.5-1,2.3+1)--(4+15.5-1,2.3+1);
\node at (4+13.75-1,.2+1) {\footnotesize$i'$};
\node at (4+13.75-1,1.8+1) {\footnotesize$j'$};
\draw[->] (4+13.65-1,.6+1)--(4+13.65-1,1.3+1);
\draw[->] (4+16.3-1,.6+1)--(4+16.3-1,1.3+1);
\node at (4+19.15-1,.9+1) {\footnotesize$e_{i'}(r+s+t+1,u,v)$};
\node at (14.8-1,.9+1) {\footnotesize$e_{i'}(r+s+t+1,u,v)$};
\node at (19-1,2.76+1) {\footnotesize$e_{i}(r,s,t+u+v+1)$};
\node at (19-1,-.87+1) {\footnotesize$e_{i}(r,s,t+u+v+1)$};
\end{tikzpicture}
\caption{$W_{p,q,n}$-critical 2-cell in Case 3 (left) and the resulting relation (right)}
\label{caso3}
\end{figure}

\noindent{\bf Case 4}. Let $c=\{c_1,\ldots,c_n\}$ be a $W_{p,q,n}$-critical 2-cell outside $\UF(\Gamma_{p,q},n)$, say with square ingredient $c_1$ determined by the four vertices $i,i+1,j-1,j$. This time we only need the three non-negative integers $r,s,t$ depicted in Figure \ref{caso4} that add up to $n-1$ in order to indicate the distribution of (not necessarily blocked) vertices in $c$. The resulting relation in $B_n(p\times q)$ is
\begin{equation}\label{relation4}\begin{gathered}
\varepsilon_i(r,s,t) \cdot
\overline{\varepsilon_{i+1}(r,s,t)}, \text{ when $i+1\not\equiv 0 \text{ mod } p$,} \\
\hspace{2.1cm}\varepsilon_i(r,s,t), \text{ when $i+1\equiv 0 \text{ mod } p$.}
\end{gathered}\end{equation}
Note that the second instance of (\ref{relation4}) holds only with $s=0$.

\begin{figure}[h!]
\centering
\begin{tikzpicture}[x=.6cm,y=.6cm]
\draw(1,0)--(9,0)--(9,2)--(1,2);
\node at (.5,0) {$\cdots$};\node at (.5,2) {$\cdots$};
\node at (5.5,0) {$\bullet$};\node at (3,0) {$\bullet$};
\node at (5.5,2) {$\bullet$};\node at (3,2) {$\bullet$};
\node at (3,2.5) {\footnotesize$j$};
\node at (5.5,-.4) {\footnotesize$i+1$};
\node at (5.5,2.5) {\footnotesize$j-1$};
\node at (3,-.4) {\footnotesize$i$};
\draw[thick,dotted] (3,0)--(3,2);\draw[thick,dotted] (5.5,0)--(5.5,2);
\node at (1.8,.3) {\footnotesize$r$};
\draw[->] (2.1,.3)--(2.7,.3);
\draw[->] (1.45,.3)--(.8,.3);
\node at (8,.95) {\footnotesize$s$};
\draw (6.2,.3) arc(270:450:1.5cm and .4cm);
\draw[->] (6.3,1.633)--(5.9,1.633);
\draw[->] (6.3,.3)--(5.9,.3);
\node at (1.8,1.6) {\footnotesize$t$};
\draw[->] (2.1,1.6)--(2.7,1.6);
\draw[->] (1.45,1.6)--(.8,1.6);
\node at (4.25,.9) {$c_1$};
\draw[very thin] (3.25,0.2)--(4-.125,.7);
\draw[very thin] (4.5,1.2)--(5.25,1.8);
\draw[very thin] (3.25,0.8)--(4.5,1.8);
\draw[very thin] (4.05,0.2)--(5.3,1.2);
\draw[very thin] (3.25,1.37)--(3.75,1.77);
\draw[very thin] (4.75,0.2)--(5.25,0.6);
\end{tikzpicture}
\caption{$W_{p,q,n}$-critical 2-cell outside $\UF(\Gamma_{p,q},n)$ in Case 4}
\label{caso4}
\end{figure}

Relations of the first type above are the only ones relevant for Theorem \ref{2huecos}.

\begin{proof}[Proof of Theorem \ref{2huecos}.] 
Proposition \ref{dualidad} suggests the following efficient notation for generators of $B_{2p-2}(p\times2)$. For an edge $e=[i,j]$, $i<j$, and  a vertex $h$ different from $i$ and $j$, let $e[h]$ stand for the generator coming from the 1-cell of $\UX(2p-2,p\times2)$ consisting of $e$ and all vertices of $\Gamma_{p,2}$ other than $i, j,h$. In these terms, the Morse-Poincar\'e presentation of $B_{2p-2}(p\times2)$ has $3p-5$ generators, namely, $b_1:=e_1[2p-1]$, $a_i:=e_i[i-1]$, $b_i:=e_i[2p-i]$, and $c_i:=e_i[2p]$ for $i\in\{2,3,\ldots,p-1\}$. SSee the left hand-side of Figure \ref{dospemenos2}.
\begin{figure}
\centering
\begin{tikzpicture}[x=.6cm,y=.6cm]
\draw(1,0)--(7,0);\draw(1,2)--(7,2);\node at (7.55,0) {$\cdots$};
\draw(8,0)--(12,0)--(12,2)--(8,2);\node at (7.55,2) {$\cdots$};
\node at (1,0) {$\bullet$};\node at (3,0) {$\bullet$};
\node at (5,0) {$\bullet$};\node at (10,0) {$\bullet$};
\node at (12,0) {$\bullet$};
\node at (1,2) {$\bullet$};\node at (3,2) {$\bullet$};
\node at (5,2) {$\bullet$};\node at (10,2) {$\bullet$};
\node at (12,2) {$\bullet$};
\node at (1,-.4) {\footnotesize$1$};
\node at (3,-.4) {\footnotesize$2$};
\node at (5,-.4) {\footnotesize$3$};
\node at (10,-.4) {\footnotesize$p-1$};
\node at (12,-.4) {\footnotesize$p$};

\node at (1,2.4) {\footnotesize$2p$};
\node at (3,2.4) {\footnotesize$2p-1$};
\node at (5,2.4) {\footnotesize$2p-2$};
\node at (10,2.4) {\footnotesize$p+2$};
\node at (12,2.4) {\footnotesize$p+1$};
\draw[thick,dotted] (1,0)--(1,2);\draw[thick,dotted] (3,0)--(3,2);
\draw[thick,dotted] (5,0)--(5,2);\draw[thick,dotted] (10,0)--(10,2);
\node at (.6,.95) {\footnotesize$e_1$};
\node at (2.6,.95) {\footnotesize$e_2$};
\node at (4.6,.95) {\footnotesize$e_3$};
\node at (9.34,.95) {\footnotesize$e_{p-1}$};

\draw (18.66,.4) arc(240:555:.4cm and .4cm);
\draw[->] (18.344,.82)--(18.4,.68);
\draw (18,0)--(20,0)--(20,2)--(18,2)--(18,0);
\node at (19.9,-.29) {\footnotesize$j$};
\node at (18.1,-.3) {\footnotesize$i$};
\draw[->] (4+14.5,-.3)--(4+15.5,-.3);
\draw[->] (4+14.5,2.3)--(4+15.5,2.3);
\node at (4+13.75,.2) {\footnotesize$i'$};
\node at (4+13.75,1.8) {\footnotesize$j'$};
\draw[->] (4+13.65,.6)--(4+13.65,1.3);
\draw[->] (4+16.3,.6)--(4+16.3,1.3);
\node at (21.2,.9) {\footnotesize$e_{i'}[i]$};
\node at (16.6,.9) {\footnotesize$e_{i'}[j]$};
\node at (19,2.84) {\footnotesize$e_{i}[i']$};
\node at (19,-.87) {\footnotesize$e_{i}[j']$};
\end{tikzpicture}
\caption{Generators and relations in $B_{2p-2}(p\times 2)$}
\label{dospemenos2}
\end{figure}
Furthermore, there is room only for the relations in Case 1 above. Actually, for each pair of deleted edges $e_i=[i,j]$ and $e_{i'}=[i',j']$ with $i<i'$, there is a single $W_{p,2,2p-2}$-critical 2-cell with edges $e_i$ and $e_{i'}$ and, as illustrated on the right hand-side of Figure \ref{dospemenos2}, the corresponding relation (\ref{relation1}) becomes
\begin{equation}\label{getridof}
e_i[j'] \cdot e_{i'}[i] \cdot \overline{e_i[i']} \cdot \overline{e_{i'}[j]}= b_i \hspace{.3mm} a_{i'} \hspace{.3mm} \overline{b_i} \hspace{.3mm} \overline{c_{i'}},
\end{equation}
in view of Corollary \ref{reducidas}. The resulting presentation can then be simplified by using (\ref{getridof}) to get rid of $c_{i'}=b_1 a_{i'} \overline{b_1}$ for $1<i'\leq p-1$. This yields the presentation with generators $b_1$, $a_i$ and $b_i$ for $2\leq i\leq p-1$, subject to the relations $b_i a_{i'} \overline{b_i} b_1 \overline{a_{i'}} \overline{b_1}$ for $2\leq i<i'\leq p-1$. Then the rules 
$$
\begin{tabular}{lcr}
$a_i\mapsto x_i$ & 
\hspace{.5cm}for $2\leq i\leq p-1\hspace{.5cm}$ & 
$a_i \mapsfrom x_i$ \\
$b_i \mapsto y_1\overline{y_i}$ & 
\hspace{.5cm}for $2\leq i\leq p-1\hspace{.5cm}$ &
$\overline{b_i}b_1 \mapsfrom y_i$\\
$b_1\mapsto y_1$ & &
$b_1 \mapsfrom y_1$
\end{tabular}
$$
define an isomorphism between $B_{2p-2}(p\times2)$ and the group with generators $x_2,x_3,\ldots,x_{p-1}$ and $y_1,y_2,y_3\ldots,y_{p-1}$ subject to relations
$y_1 \overline{y_i} \hspace{.4mm}x_j y_i \hspace{.4mm}\overline{x_j} \hspace{.5mm}\overline{y_1}$
for $2\leq i<j\leq p-1$. The proof is complete since the latter relation simply says that $y_i$ and $x_j$ commute. 
\end{proof}

In general, the presentation of $B_n(p\times q)$ in terms of generators $\varepsilon_i(r,s,t)$ and their relations (\ref{relation1})--(\ref{relation4}) discussed in this section is far from optimal. For $q=2$, in the final two sections of the paper we first give a major reduction of generators, and then a corresponding simplification of relations. This will allow us to identify the right-angled Artin group structure asserted in Theorem \ref{combgen} for $B_n(p\times 2)$ when $p\leq n\leq 2p-5$.

\section{\texorpdfstring{Minimal set of generators and key relations in $B_n(p\times2)$}{Minimal generators and key relations}}
The rest of the paper is devoted to the case $q=2$.

\smallskip
In this section we work under the preliminary assumption $p\leq n\leq 2p-3$. In particular $3\leq p\leq n$. (The stronger assumption $p\leq n\leq 2p-5$ will be required only in Proposition~\ref{otrasrels} near the end of the section.) Under these conditions there is room only for the relations in Cases 1 and 4 of Section \ref{rawpresentation}. This section's goal is to spell out a reduced form of the resulting presentation.

\smallskip
Following the conventions set forth in the previous section, for $1\leq i\leq p-1$ and $r,s,t\geq0$ with $r+s+t=n-1$, we have a generator $\varepsilon_i(r,s,t)$ provided it is possible to fit $r$, $s$ and $t$ blocked vertices so to assemble the critical 1-cell with configuration
\begin{figure}[h!]
\centering
\begin{tikzpicture}[x=.6cm,y=.6cm]
\draw(1,0)--(9,0)--(9,2)--(1,2);
\node at (1,0) {$\bullet$};\node at (9,0) {$\bullet$};
\node at (1,2) {$\bullet$};\node at (9,2) {$\bullet$};
\node at (5.5,0) {$\bullet$};\node at (5.5,2) {$\bullet$};
\node at (5.5,2.5) {\footnotesize$2p+1-i$};
\node at (5.5,-.4) {\footnotesize$i$};
\node at (1,-.4) {\footnotesize$1$};
\node at (9,-.4) {\footnotesize$p$};
\node at (1,2.5) {\footnotesize$2p$};
\node at (9,2.5) {\footnotesize$p+1$};
\draw[thick,dotted] (5.5,0)--(5.5,2);
\node at (3.2,.4) {\footnotesize$r$};
\draw[->] (3.5,.3)--(5,.3);\draw[->] (2.9,.3)--(1.5,.3);
\node at (3.2,1.6) {\footnotesize$t$};
\draw[->] (3.5,1.6)--(5,1.6);\draw[->] (2.9,1.6)--(1.5,1.6);
\node at (5.1,.95) {\footnotesize$e_i$};
\node at (8,.95) {\footnotesize$s$};
\draw (6.2,.3) arc(270:450:1.5cm and .4cm);
\draw[->] (6.3,1.633)--(5.9,1.633);
\draw[->] (6.3,.3)--(5.9,.3);
\end{tikzpicture}
\end{figure}

\noindent The conditions for the existence of the generator $\varepsilon_i(r,s,t)$ can then be summarized as
\begin{equation}\label{genconditions}
1\leq i\leq p-1,\quad r+s+t=n-1,\quad 0\leq r,t\leq i-1\quad\text{and}\quad 0\leq s\leq 2(p-i)
\end{equation}
(cf.~Definition \ref{notacion}). Likewise, the left hand-side in Figure \ref{caso1} makes it clear that a relation (\ref{relation1}) holds whenever
\begin{equation}\label{relconditions}\begin{gathered}
1\leq i<i'\leq p-1,\quad r+s+t+u+v=n-2, \\
0\leq r,v\leq i-1,\quad 0\leq s,u\leq i'-i-1\quad \text{and}\quad 
0\leq t\leq 2(p-i').
\end{gathered}\end{equation}
Relations (\ref{relation4}) allow us to make a first substantial reduction of generators:

\begin{lemma}\label{generatorsreduction}
Let $i<i'$. If $\varepsilon_i(r,s,t)$ and $\varepsilon_{i'}(r,s,t)$ are allowed generators (i.e., each satisfying the corresponding conditions (\ref{genconditions})), then so is $\varepsilon_k(r,s,t)$ for $i\leq k\leq i'$. Furthermore all of these generators agree in $B_n(p\times2)$, and the resulting element will simply be denoted by $\varepsilon(r,s,t)$. For $s=0$, $\varepsilon(r,s,t)$ is the trivial element.
\end{lemma}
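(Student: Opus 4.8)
The plan is to treat the three assertions in turn: the combinatorial inclusion of intermediate generators, their coincidence in $B_n(p\times2)$, and the triviality when $s=0$. The middle assertion is the substantive one and rests on the Case-4 relations (\ref{relation4}) already computed; the other two are short consequences.

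First, for the claim that each intermediate $\varepsilon_k(r,s,t)$ with $i\le k\le i'$ is an allowed generator, I would simply verify the four defining inequalities (\ref{genconditions}). The constraint $r+s+t=n-1$ and the bound $1\le k\le p-1$ are immediate from $1\le i\le k\le i'\le p-1$. For the remaining two, the point is that the upper bound $r,t\le i-1$ supplied by the smaller index $i$ already forces $r,t\le k-1$ (since $k\ge i$), while the upper bound $s\le 2(p-i')$ supplied by the larger index $i'$ already forces $s\le 2(p-k)$ (since $k\le i'$). Thus every intermediate $\varepsilon_k(r,s,t)$ satisfies (\ref{genconditions}).

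Next, to show that consecutive allowed generators $\varepsilon_k(r,s,t)$ and $\varepsilon_{k+1}(r,s,t)$ coincide in $B_n(p\times2)$ (whence the general case follows by telescoping from $k=i$ to $k=i'-1$), I would invoke the first instance of relation (\ref{relation4}). The relevant $W_{p,2,n}$-critical $2$-cell is the unit square occupying columns $k$ and $k+1$, whose corner vertices are $k,k+1,2p-k,2p+1-k$; its left and right vertical edges are exactly the deleted edges $e_k=[k,2p+1-k]$ and $e_{k+1}=[k+1,2p-k]$, while its top and bottom edges are tree (non-deleted) edges. The crux is the bookkeeping check that, because no free vertex may sit in the two columns occupied by the square, the $n-1$ vertex ingredients split in the \emph{same} way relative to either vertical edge: the $r$ bottom-left and $t$ top-left vertices lie in columns $<k$, and the $s$ remaining vertices lie in columns $>k+1$, so the triple $(r,s,t)$ is literally identical for $e_k$ and $e_{k+1}$. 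Since the two horizontal faces have trivial reduced expression by Corollary \ref{reducidas}, the reduced boundary word is $\varepsilon_k(r,s,t)\,\overline{\varepsilon_{k+1}(r,s,t)}$, giving $\varepsilon_k(r,s,t)=\varepsilon_{k+1}(r,s,t)$. One must also note this square is genuinely critical: fitting the $s$ right-hand vertices in the $p-k-1$ columns to the right of column $k+1$ requires $s\le 2(p-k-1)=2\bigl(p-(k+1)\bigr)$, which is precisely the $s$-bound in (\ref{genconditions}) for the allowed generator $\varepsilon_{k+1}(r,s,t)$, so Part 1 guarantees the needed room.

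Finally, for the case $s=0$ I would use the second instance of relation (\ref{relation4}), taking the square in the two rightmost columns, i.e.\ $i=p-1$ and $i+1=p\equiv0\bmod p$. Its right vertical edge is $[p,p+1]$, a tree edge rather than a deleted one, so it contributes nothing to the reduced boundary word; the relation degenerates to $\varepsilon_{p-1}(r,0,t)=1$, and this configuration forces $s=0$ since there are no columns to the right of column $p$. For any allowed generator $\varepsilon_k(r,0,t)$ one has $r,t\le k-1\le p-2$, so $\varepsilon_{p-1}(r,0,t)$ is itself allowed; Parts 1 and 2 then identify all these elements, yielding $\varepsilon(r,0,t)=\varepsilon_{p-1}(r,0,t)=1$. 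I expect the main obstacle to lie in the Part-2 bookkeeping, namely verifying that the triple $(r,s,t)$ really is preserved as the edge index increments from $k$ to $k+1$; this hinges on the single observation that columns $k$ and $k+1$ carry no free vertices, which is also exactly what ties the existence of the critical $2$-cell to the $s$-bound in (\ref{genconditions}).
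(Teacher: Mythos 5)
Your proof is correct and takes essentially the same route as the paper: part 1 is the elementary verification of (\ref{genconditions}), and parts 2 and 3 are exactly the first and second instances of the Case-4 relations (\ref{relation4}), applied to consecutive indices $k,k+1$ and telescoped. The paper's own proof is little more than a citation of (\ref{relation4}); your bookkeeping ---that the two vertical faces of the square carry the same triple $(r,s,t)$, and that existence of the relevant critical $2$-cell is guaranteed by the allowed-generator bounds--- is precisely the content behind that citation.
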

\begin{proof}
The verification of the first assertion is elementary. The other two assertions follow directly from relations~(\ref{relation4}).
\end{proof}

In view of the third condition in (\ref{genconditions}), the first integer $i\in\{1,2,\ldots,p-1\}$ for which $\varepsilon_i(r,s,t)$ would make sense is $\iota(r,t):=\max(r,t)+1$. So, taking into account Lemma \ref{generatorsreduction}, conditions (\ref{genconditions}) become 
\begin{equation}\label{genconds}
r,t\geq0<s,\quad r+s+t=n-1,\quad \iota(r,t)\leq p-1\quad \text{and}\quad s\leq2(p-\iota(r,t)),
\end{equation}
which characterize the existence of a ``unified generator'' $\varepsilon(r,s,t)$ for $B_n(p\times2)$. Likewise, conditions (\ref{relconditions}) become
\begin{equation}\label{relconds}\begin{gathered}
r,s,u,v\geq0<t,\quad r+s+t+u+v=n-2, \\
\iota(r,v)+\iota(s,u)\leq p-1,\quad \text{and}\quad
t\leq 2(p-\iota(r,v)-\iota(s,u)),
\end{gathered}\end{equation}
which characterize the existence of a ``unified relation''
\begin{equation}\label{rel1}
\varepsilon(r,s+t+u+1,v) \cdot
\varepsilon(r+s,t,u+v+1) \cdot
\overline{\varepsilon(r,s+t+u+1,v)} \cdot
\overline{\varepsilon(r+s+1,t,u+v)}.
\end{equation}
The main task in this section is to make an additional simplification of the resulting ``unified presentation''. 

\medskip
Set $\varphi:=n-p$, so $0\leq\varphi\leq p-3$ and $2\varphi\leq n-3$. Then, for $1\leq j\leq n-1$, set $m_j:=\max(0,\varphi-\lfloor \hspace{.6mm}j/2\rfloor)$. For typographical reasons, at times it will be best to use the notation $m(j)$ instead of $m_j$. It is straightforward to check that $g_j:=\varepsilon(m_j,j,n-1-j-m_j)$ satisfies (\ref{genconds}), so this is a honest element of $B_n(p\times2)$.

\begin{proposition}\label{minimalgenerators}
The elements $g_1,\ldots,g_{n-1}$ generate $B_n(p\times2)$. For $n\leq 2p-5$, this is in fact a minimal set of generators.
\end{proposition}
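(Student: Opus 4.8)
The plan is to establish generation by a downward induction on the middle coordinate, and then to read off minimality from the abelianization. For the generation step I would first rewrite the unified relation \eqref{rel1} by solving for its last factor:
\[
\varepsilon(r+s+1,t,u+v)=\varepsilon(r,s+t+u+1,v)\,\varepsilon(r+s,t,u+v+1)\,\overline{\varepsilon(r,s+t+u+1,v)}.
\]
This presents a generator of middle coordinate $t$ and first coordinate $r+s+1$ as a conjugate of the generator with the same middle coordinate but first coordinate lowered by one, the conjugating element $\varepsilon(r,s+t+u+1,v)$ having strictly larger middle coordinate $s+t+u+1\geq t+1$. I would therefore induct downward on the middle coordinate $j$. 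The base case $j=n-1$ is immediate, since $r+t=0$ forces the unique generator $\varepsilon(0,n-1,0)=g_{n-1}$. For the inductive step, assuming every generator of middle coordinate exceeding $j$ already lies in the subgroup $G$ generated by $g_1,\ldots,g_{n-1}$, the displayed identity with $t=j$ lets me lower the first coordinate of any middle-$j$ generator one unit at a time, each move costing only a conjugation by an element of $G$. A direct computation from \eqref{genconds} shows that the least first coordinate attainable by a valid middle-$j$ generator is exactly $m_j=\max(0,\varphi-\lfloor j/2\rfloor)$, so the reduction terminates at $g_j$; together with Lemma \ref{generatorsreduction}, which already disposes of the $s=0$ generators and unifies the $\varepsilon_i$, this yields $G=B_n(p\times2)$.

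The step that needs genuine care, and where the standing inequality $n\leq 2p-3$ enters, is the availability of each elementary move. To lower the first coordinate of $\varepsilon(a,j,b)$ I must choose a splitting $r+s=a-1$, $u+v=b$ for which \eqref{relconds} holds; taking $r=v=0$ minimizes $\max(r,v)+\max(s,u)$ to $\max(a-1,b)$, and I would then verify that the resulting constraints $\max(a-1,b)\leq p-3$ and $j\leq 2(p-\max(a-1,b)-2)$ persist throughout the descent precisely because the generators in play already satisfy \eqref{genconds}. I expect this bookkeeping, namely confirming that an admissible relation \eqref{relconds} is present at every stage of the descent (including the extreme values of the outer coordinates), to be the main obstacle.

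For minimality I would argue homologically. Generation already forces $\dim_{\mathbb{Q}}H_1(B_n(p\times2);\mathbb{Q})\leq n-1$, so it suffices to produce $n-1$ independent classes. Passing to the abelianization, I observe that in \eqref{rel1} the conjugating factor $\varepsilon(r,s+t+u+1,v)$ cancels against its inverse, so the relation collapses to the identification of $\varepsilon(r+s,t,u+v+1)$ with $\varepsilon(r+s+1,t,u+v)$, two generators of the \emph{same} middle coordinate; likewise the Case-4 relations \eqref{relation4} only trivialize the $s=0$ generators and merge generators sharing a middle coordinate. Hence no relation links two distinct middle coordinates or kills a generator with $s\geq1$, while the descent above shows that within each fixed middle coordinate all generators are identified. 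Consequently the abelianization is free of rank equal to the number of realized middle coordinates, namely $n-1$, so $B_n(p\times2)^{\mathrm{ab}}\cong\mathbb{Z}^{n-1}$ and no set of fewer than $n-1$ elements can generate. The stronger hypothesis $n\leq 2p-5$ is what guarantees, through the complete relation analysis carried out in Proposition \ref{otrasrels}, that the relations are exactly those above, so that the abelianization is confirmed to be $\mathbb{Z}^{n-1}$ with the $g_j$ mapping to a basis; pinning down this relation set cleanly is the delicate point of the minimality half.
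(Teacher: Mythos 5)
Your generation argument is essentially the paper's own, not a different route. Once you specialize to $r=v=0$, the conjugator in your displayed identity is forced to be $\varepsilon(0,n-1,0)=g_{n-1}$ (because then $s+t+u=n-2$), so your outer downward induction on the middle coordinate collapses: every elementary move is conjugation by the single fixed generator $g_{n-1}$, which is exactly the paper's relation (\ref{reduccion}) and leads to its closed formula $\varepsilon(r,s,t)=g_s^{g_{n-1}^{r-m_s}}$. The two facts you defer as ``bookkeeping'' --- that every valid middle-$j$ generator has first coordinate at least $m_j$, and that an admissible instance of (\ref{relconds}) is available at every stage of the descent --- are precisely the paper's Lemma \ref{genrestriccion} and Proposition \ref{iteratedpowers}, and they constitute the entire technical content of this half (a case analysis over $a>\varphi$ versus $0<a\leq\varphi$, using $n\leq2p-3$ in an essential way). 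So the outline is right, the one genuine gap is the step you postponed, and that step does go through.

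For minimality you genuinely depart from the paper, to your advantage. The paper obtains minimality only as a corollary of the full right-angled Artin structure of the final section (Propositions \ref{cambiodebasefinal} and \ref{encompassing}), which is why it is stated under $n\leq2p-5$. Your direct abelianization of the Morse presentation is sound: Case-1 relations abelianize to $\varepsilon_{i'}(r+s,t,u+v+1)=\varepsilon_{i'}(r+s+1,t,u+v)$, Case-4 relations merge generators with equal middle coordinate or kill $s=0$ ones, so $H_1$ splits as a direct sum indexed by the middle coordinates $1,\dots,n-1$, and your descent (which relies on the same deferred Proposition \ref{iteratedpowers}) makes each summand a single copy of $\mathbb{Z}$, giving $H_1\cong\mathbb{Z}^{n-1}$. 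However, your closing sentence is a misconception: no appeal to Proposition \ref{otrasrels} or to $n\leq2p-5$ is needed anywhere in this argument. The relation set is pinned down by the discrete Morse analysis of Section \ref{rawpresentation} alone --- under the standing hypothesis $p\leq n\leq2p-3$, only Cases 1 and 4 occur, since in height 2 any two deleted edges are nested, making Cases 2 and 3 vacuous. Consequently your argument, once the deferred verification is supplied, actually proves minimality on the larger range $p\leq n\leq2p-3$, a modest strengthening of the statement as given.
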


Minimality in Proposition \ref{minimalgenerators} for $n\leq2p-5$ will follow once we identify, in the next section, a right-angled Artin group structure on $B_n(p\times2)$ with $n-1$ generators. The following considerations set the grounds for proving the first assertion in Proposition \ref{minimalgenerators}.

\begin{notation}{\em
For elements $x$ and $y$ of a group, the $x$-conjugation-power of $y$ is $y^x:=xy\overline{x}$ (recall $\overline{x}$ stands for the inverse of $x$). The obvious relations $z^{xy}=(z^y)^x$, $(xy)^z=x^zy^z$ and $\overline{y^x}=\overline{y}^x$ will be freely used throughout the paper.
}\end{notation}

\begin{lemma}\label{genrestriccion}
Any element $\varepsilon(r,s,t)$ with $r\leq\varphi$ satisfies $2(\varphi-r)\leq s$. In particular $r\geq m_s$.
\end{lemma}
\begin{proof}
Assume for a contradiction $0\leq r\leq\varphi$ and $s<2(\varphi-r)$. If $r\leq t$, then the second and fourth conditions in (\ref{genconds}) yield $s\leq2(p-t-1)=2(p-n+r+s)=-2(\varphi-r)+2s$, which is incompatible with the assumption. Likewise, if $t<r$, then $$n-1=r+s+t\leq r+(2\varphi-2r-1)+(r-1)=2\varphi-2=2(n-p)-2,$$ so that $2p+1\leq n$, which is incompatible with this section's hypothesis that $n\leq 2p-3$. 
\end{proof}

A relation (\ref{rel1}) having $r=v=0$ can be written as
\begin{equation}\label{reduccion}
\varepsilon(s+1,t,u)=\varepsilon(s,t,u+1)^{g_{n-1}},
\end{equation}
which holds whenever
\begin{equation}\label{reduccionconditions}
\mbox{$s,u\geq0<t$, \ \ $s+t+u=n-2$, \ \ $\iota(s,u)\leq p-2$ \ \ and \ \ $t\leq2(p-1-\iota(s,u))$.}
\end{equation}
The first assertion in Proposition \ref{minimalgenerators} is then a consequence of the following result, whose proof requires a slightly involved arithmetical argument (included for completeness).

\begin{proposition}\label{iteratedpowers}
For any element $\varepsilon(a,b,c)$ we have $a\geq m_b$. Furthermore, conditions (\ref{reduccionconditions}) hold for $s:=a-1$, $t:=b$ and $u:=c$ whenever $a>m_b$.
\end{proposition}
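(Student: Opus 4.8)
The plan is to prove both assertions by elementary arithmetic resting on the single identity $a+b+c=n-1=p+\varphi-1$ (recall $\varphi=n-p$), combined with the existence constraints~(\ref{genconds}) enjoyed by $\varepsilon(a,b,c)$, namely $\max(a,c)\le p-2$ and $b\le 2\bigl(p-1-\max(a,c)\bigr)$. For the first assertion $a\ge m_b$ I would split on the size of $a$. If $a\le\varphi$, this is exactly the ``in particular'' clause of Lemma~\ref{genrestriccion} applied with $(r,s,t)=(a,b,c)$. If instead $a>\varphi$, I observe that $m_b=\max\bigl(0,\varphi-\lfloor b/2\rfloor\bigr)\le\varphi$, so $a>\varphi\ge m_b$ holds for free. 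Hence $a\ge m_b$ in every case.

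For the second assertion I assume $a>m_b$ and verify the four clauses of~(\ref{reduccionconditions}) for $(s,t,u)=(a-1,b,c)$. Since $a>m_b\ge0$ forces $a\ge1$, the clauses $s=a-1\ge0$, $u=c\ge0$, $t=b>0$, and $s+t+u=n-2$ are immediate. The only substance lies in $\iota(a-1,c)\le p-2$ and $b\le 2\bigl(p-1-\iota(a-1,c)\bigr)$, which I would attack by splitting on whether $a-1\ge c$. When $a-1\ge c$ one has $\max(a,c)=a$ and $\iota(a-1,c)=a$, so the two clauses become $a\le p-2$ and $b\le2(p-1-a)$, both of which are verbatim restatements of~(\ref{genconds}); here the hypothesis $a>m_b$ is not even needed.

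The crux is the remaining case $a\le c$, where $\iota(a-1,c)=c+1$ and the two clauses read $c\le p-3$ and $b\le 2(p-2-c)$, whereas existence supplies only the weaker $c\le p-2$ and $b\le 2(p-1-c)$, each short by one unit; this is precisely where $a>m_b$ must be spent. I would argue by contrapositive: if either clause failed, then $b+2c\ge 2p-3$. Indeed, failure of the second clause gives $b\ge 2p-3-2c$ directly, while failure of the first means $c=p-2$, in which case existence forces $b\le 2$ and hence $b+2c\ge 1+2(p-2)=2p-3$ as well. From $b+2c\ge 2p-3$ I get $\lceil b/2\rceil\ge p-c-1$, and then, rewriting $a=p+\varphi-1-b-c=(\varphi-\lfloor b/2\rfloor)+(p-1-\lceil b/2\rceil-c)$ via $\lfloor b/2\rfloor+\lceil b/2\rceil=b$, the second summand is $\le0$, so $a\le\varphi-\lfloor b/2\rfloor$. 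As $a\ge0$ this quantity is nonnegative, whence $m_b=\varphi-\lfloor b/2\rfloor\ge a$, contradicting $a>m_b$.

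The main obstacle I anticipate is exactly this floor/ceiling bookkeeping: one must convert the simultaneous ``off by one'' in the two desired bounds into the single clean inequality $b+2c\ge 2p-3$, and then recognize that this inequality squeezes $a$ below the threshold $\varphi-\lfloor b/2\rfloor$, which is the defining value of $m_b$ whenever it is nonnegative. Once that squeeze is in place the argument closes, and the first assertion $a\ge m_b$ together with the verified conditions~(\ref{reduccionconditions}) feed directly into the descent underlying Proposition~\ref{minimalgenerators}.
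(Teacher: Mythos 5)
Your proof is correct. For the first assertion it coincides with the paper's argument (Lemma \ref{genrestriccion} when $a\le\varphi$, the trivial bound $m_b\le\varphi<a$ otherwise), but for the second assertion you take a genuinely different route. The paper splits on $a>\varphi$ versus $0<a\le\varphi$, writes $a=\varphi-i$ in the latter case, re-invokes Lemma \ref{genrestriccion} together with $a>m_b$ to obtain $b\ge2i+2$, and then refutes failure of each of the last two clauses of (\ref{reduccionconditions}) by separate computations of the shape $n-1=a+b+c\ge n+(\mbox{nonnegative quantity})$. You instead split on $a-1\ge c$ versus $a\le c$ (a split the paper makes only implicitly, through (\ref{laazul})), note that in the first case both clauses are verbatim restatements of (\ref{genconds}) so that the hypothesis $a>m_b$ is not even needed there, and dispose of the second case with one unified contrapositive: failure of either clause forces $b+2c\ge2p-3$, which via $a+b+c=n-1$ and $\lfloor b/2\rfloor+\lceil b/2\rceil=b$ squeezes $a\le\varphi-\lfloor b/2\rfloor$, hence $a\le m_b$. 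Your organization isolates exactly where $a>m_b$ is spent and replaces two case-specific contradiction computations by a single inequality; the paper's organization mirrors the $\varphi$-based case pattern it reuses in Propositions \ref{otrasrels} and \ref{relsfinales}, at the cost of somewhat longer bookkeeping. One cosmetic repair: in your first-clause-failure branch, the remark that ``existence forces $b\le2$'' is superfluous and the subsequent ``hence'' does not follow from it --- what you actually use is $b\ge1$, guaranteed by $b>0$ in (\ref{genconds}); state that instead.
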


\begin{proof}
By (\ref{genconds}), our hypotheses are
\begin{equation}\label{updatedgenconds}
a,c\geq0<b,\quad a+b+c=n-1,\quad \iota(a,c)\leq p-1\quad \text{and}\quad b\leq2(p-\iota(a,c)).
\end{equation}
If $a=0$, Lemma \ref{genrestriccion} forces in fact $a=m_b=0$. So we can assume $a>0$, with the first two conditions in (\ref{reduccionconditions}) holding on the nose. In order to check the last two conditions in (\ref{reduccionconditions}), we start by noticing that the third condition in (\ref{updatedgenconds}) gives
\begin{equation}\label{laazul}
\iota(s,u)=\iota(a-1,c)=\begin{cases}
\iota(a,c)=c+1\leq p-1, & \mbox{if $a\leq c$;} \\
\iota(a,c)-1\leq p-2, &  \mbox{otherwise.}
\end{cases}
\end{equation}

\noindent{\bf Case $a>\varphi$}. Note that $a>m_b$, as $\varphi\geq m_j$ for any $j$. The third condition in (\ref{reduccionconditions}) follows from (\ref{laazul}) by observing that, if $c=p-2$, we would get
$$
n-1=a+b+c\geq(\varphi+1)+b+p-2=n+b-1,
$$
which is incompatible with $b>0$ in (\ref{updatedgenconds}). Likewise, the fourth condition in (\ref{reduccionconditions}) follows from (\ref{laazul}) and the fourth condition in (\ref{updatedgenconds}). Namely, if $a\leq c$ but $b\geq2(p-c)-3$, we would get
$$
n-1=a+b+c\geq(\varphi+1)+(2p-2c-3)+c=n+p-c-2,
$$
i.e., $c+1\geq p$, which is incompatible with the top line on the right hand-side of (\ref{laazul}).

\medskip\noindent{\bf Case $0<a\leq\varphi$}. Lemma \ref{genrestriccion} gives $a\geq m_b$. Assume in fact $a>m_b$. Say
\begin{equation}\label{ahora}
\mbox{$a=\varphi-i$ \ with \ $0\leq i<\varphi$.}
\end{equation}
Lemma \ref{genrestriccion} also gives $b\geq2(\varphi-a)=2i$, while the assumption $a>m_b$ yields in fact $b\geq2i+2$. As in the previous case, the third condition in (\ref{reduccionconditions}) follows from (\ref{laazul}) by observing that, if $c=p-2$, we would get
$$
n-1=a+b+c\geq(\varphi-i)+(2i+2)+p-2=n+i,
$$
which is incompatible with (\ref{ahora}). Likewise, the fourth condition in (\ref{reduccionconditions}) follows from (\ref{laazul}) and the fourth condition in (\ref{updatedgenconds}). Explicitly, if $a\leq c$ and $b\geq2(p-c)-3$, we would get
$$
n-1=a+b+c\geq(\varphi-i)+(2p-2c-3)+c=n+p-c-i-3,
$$
so that $c\geq p-i-2$ and then $n-1=a+b+c\geq(\varphi-i)+(2i+2)+(p-i-2)=n$, which is impossible.
\end{proof}

We have shown how the form (\ref{reduccion}) of the relations (\ref{rel1}) having $r=v=0$ gives a systematic way to write any given $\varepsilon(r,s,t)\in B_n(p\times2)$ as an iterated $g_{n-1}$-conjugation-power of one of the ``reduced'' generators $g_1,\ldots,g_{n-1}$, namely
$$
\varepsilon(r,s,t)=g_s^{\left(g_{n-1}^{r-m_s}\right)}=:g_s^{g_{n-1}^{r-m_s}}.
$$
In these terms, the unified relations (\ref{rel1}) take the ``reduced'' form
\begin{equation}\label{unifiedrelation}
g_{s+t+u+1}^{g_{n-1}^{r-m(s+t+u+1)}}
g_{t}^{g_{n-1}^{r+s-m(t)}}
\overline{g_{s+t+u+1}}^{\hspace{.4mm}g_{n-1}^{r-m(s+t+u+1)}}
\overline{g_{t}}^{\hspace{.4mm}g_{n-1}^{r+s+1-m(t)}},
\end{equation}
which holds under a ``reduced'' version of (\ref{relconds}), namely
\begin{equation}\label{unifiedrelconds}\begin{gathered}
r,s,u,v\geq0<t,r+v,\quad r+s+t+u+v=n-2, \\
\iota(r,v)+\iota(s,u)\leq p-1\quad \text{and}\quad
t\leq 2(p-\iota(r,v)-\iota(s,u)).
\end{gathered}\end{equation}
Actually, since there is a one-to-one correspondence between relations (\ref{reduccion}) and elements $\varepsilon(r,s,t)$ different from the reduced generators, we get a ``reduced'' presentation of $B_n(p\times2)$ consisting of reduced generators and reduced relations. We close the section by identifying (in Propositions \ref{relsconab0}, \ref{otrasrels} and \ref{relsfinales} below) a subset of reduced relations which, as proved in the next section, is directly responsible for a right-angled Artin group structure on $B_n(p\times2)$ when $n\leq2p-5$.

\begin{notation}{\em
For elements $x$ and $y$ of a group, we write ``$x\,\&\,y$'' to mean that $x$ and $y$ commute. This convention is motivated by the standard graph representation of a right-angled Artin group. Indeed, an expression of the form $x\,\&\,y$ suggests the existence of an edge between $x$ and $y$.
}\end{notation}

The following fact is straightforward to check.
\begin{lemma}\label{notacionampersant}
In any group, the relation $xy\hspace{.2mm}\overline{x}\hspace{.4mm}\overline{y}^z=1$ is equivalent to $y\,\&\,\overline{x}z$
\end{lemma}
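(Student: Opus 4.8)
The plan is to reduce the claimed equivalence to a one-line rearrangement, unwinding the conjugation notation $y^z := zy\overline{z}$ and the convention $x\,\&\,y$ for commutation. The whole statement is a routine identity valid in any group, so the only care needed is bookkeeping with inverses and the placement of conjugators.

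First I would rewrite the left-hand relation using the definition of conjugation-power. Since $\overline{y}^{\,z}=z\overline{y}\hspace{.4mm}\overline{z}$, its inverse is $\overline{\,\overline{y}^{\,z}\,}=z y\overline{z}=y^z$. Hence the equation $xy\overline{x}\hspace{.4mm}\overline{y}^{\,z}=1$ is equivalent to
$$
xy\overline{x}=y^z=zy\overline{z},
$$
i.e.\ to the equality of conjugation-powers $y^x=y^z$. This first step already exhibits the relation as a symmetric statement comparing the two conjugates of $y$ by $x$ and by $z$.

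Next I would convert $xy\overline{x}=zy\overline{z}$ into the desired commutation by conjugating both sides appropriately: left-multiplying by $\overline{x}$ and right-multiplying by $z$ gives
$$
y\overline{x}z=\overline{x}\,zy\overline{z}z=\overline{x}zy,
$$
which is precisely the assertion that $y$ commutes with $\overline{x}z$, namely $y\,\&\,\overline{x}z$. Since every manipulation above is an equivalence (each step is reversible by left/right multiplication by a group element), the two displayed conditions are equivalent, completing the argument.

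There is essentially no obstacle here beyond keeping track of which side each factor sits on; the lemma is a purely formal group-theoretic identity. The only point worth stating explicitly for the reader is the use of $\overline{\,\overline{y}^{\,z}\,}=y^z$, which follows from the already-recorded rules $\overline{y^x}=\overline{y}^{\,x}$ together with $\overline{\overline{y}}=y$.
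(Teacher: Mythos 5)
Your proof is correct: unwinding $\overline{y}^{\,z}=z\overline{y}\,\overline{z}$ to rewrite the relation as $xy\overline{x}=zy\overline{z}$, and then multiplying on the left by $\overline{x}$ and on the right by $z$ to obtain $y\,\overline{x}z=\overline{x}z\,y$, is exactly the routine verification the paper has in mind when it labels this fact ``straightforward to check'' (no explicit proof is given there). Nothing is missing, and every step is visibly reversible, so the equivalence is fully established.
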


For instance, since a given relation is equivalent to any of it conjugates, we see that the reduced relations (\ref{unifiedrelation}) can be normalized to
$$
g_{s+t+u+1}^{g_{n-1}^{m(t)-m(s+t+u+1)-s}}
g_{t}\hspace{1mm}
\overline{g_{s+t+u+1}}^{\hspace{.4mm}g_{n-1}^{m(t)-m(s+t+u+1)-s}}
\overline{g_{t}}^{\hspace{.4mm}g_{n-1}}
$$
or, in view of Lemma \ref{notacionampersant}, to
\begin{equation}\label{normalizadas}
g_t\;\;\&\;\;\overline{g_{s+t+u+1}}^{\hspace{.4mm}g_{n-1}^{m(t)-m(s+t+u+1)-s}} g_{n-1}.
\end{equation}
Note that parameters $r$ and $v$ play a role in (\ref{unifiedrelconds}), but not in (\ref{normalizadas}).

\begin{proposition}\label{relsconab0}
For $r=s=0$, relations (\ref{normalizadas}) holding under conditions (\ref{unifiedrelconds}) can be expressed as relations $g_i\,\&\, \overline{g_j} g_{n-1}$ holding whenever $\max(1,2\varphi)\leq i<j\leq n-2$.
\end{proposition}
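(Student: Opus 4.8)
The plan is to specialize the normalized relations (\ref{normalizadas}) together with their governing conditions (\ref{unifiedrelconds}) to the locus $r=s=0$, and then to reparametrize by the two indices that actually occur. Setting $s=0$ in (\ref{normalizadas}) turns $g_{s+t+u+1}$ into $g_{t+u+1}$ and the conjugating exponent $m(t)-m(s+t+u+1)-s$ into $m(t)-m(t+u+1)$, so the relation reads
\[
g_t \;\&\; \overline{g_{t+u+1}}^{\,g_{n-1}^{\,m(t)-m(t+u+1)}}\, g_{n-1}.
\]
I would then set $i:=t$ and $j:=t+u+1$, which inverts to $u=j-i-1$ and $v=n-1-j$ (using $r+s+t+u+v=n-2$ with $r=s=0$). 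Since $u\ge0$ and $v\ge1$ force $i<j$ and $j\le n-2$, the relation becomes $g_i\;\&\;\overline{g_j}^{\,g_{n-1}^{\,m(i)-m(j)}}\, g_{n-1}$, and the whole task reduces to (a) pinning down the exact range of $(i,j)$ produced by (\ref{unifiedrelconds}), and (b) showing the conjugating power is trivial on that range.

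For (a) I would rewrite each inequality of (\ref{unifiedrelconds}) under $r=s=0$. Using $\iota(0,v)=v+1$ and $\iota(0,u)=u+1$ gives $\iota(r,v)+\iota(s,u)=u+v+2=n-i$, so the penultimate condition $\iota(r,v)+\iota(s,u)\le p-1$ becomes $i\ge n-p+1=\varphi+1$, while the final condition $t\le2\bigl(p-\iota(r,v)-\iota(s,u)\bigr)$ becomes $i\le2(p-n+i)=2(i-\varphi)$, i.e.\ $i\ge2\varphi$. Together with $t\ge1$ (giving $i\ge1$), $v\ge1$ (giving $j\le n-2$) and $u\ge0$ (giving $i<j$), these say exactly $\max(1,2\varphi)\le i<j\le n-2$. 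The correspondence is an equivalence in both directions: every admissible quintuple $(0,0,t,u,v)$ yields such a pair, and conversely each such $(i,j)$ back-solves to $t=i$, $u=j-i-1$, $v=n-1-j$, which I would verify satisfies every clause of (\ref{unifiedrelconds}).

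For (b) the key observation is that the very bound $i\ge2\varphi$ kills the conjugation. Indeed $i\ge2\varphi$ gives $\lfloor i/2\rfloor\ge\varphi$, hence $m_i=\max(0,\varphi-\lfloor i/2\rfloor)=0$; and $j>i\ge2\varphi$ gives $m_j=0$ in the same way. Therefore the exponent $m(i)-m(j)$ vanishes, the conjugating factor $g_{n-1}^{\,0}$ is trivial, and the relation is literally $g_i\;\&\;\overline{g_j}\, g_{n-1}$, as asserted.

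The substitution itself is routine; the one point that deserves care is recognizing which of the two lower bounds on $i$ is binding. Both $i\ge\varphi+1$ and $i\ge2\varphi$ fall out of (\ref{unifiedrelconds}), but for $\varphi\ge1$ the dimension-type bound $i\ge2\varphi$ dominates (and for $\varphi=0$ it collapses to the vacuous $i\ge0$, so the bound $i\ge1$ coming from $t\ge1$ takes over), which is exactly why the combined constraint is $\max(1,2\varphi)$ and, simultaneously, exactly the threshold at which $m_i$ becomes $0$. Thus the anticipated obstacle is bookkeeping rather than conceptual: ensuring the translation of constraints is an honest equivalence, and tying the vanishing of $m_i$ to the same inequality that produces the lower bound on $i$.
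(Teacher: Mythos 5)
Your proposal is correct and takes essentially the same route as the paper's own proof: specialize (\ref{unifiedrelconds}) at $r=s=0$, convert the last two conditions into the lower bounds $t\geq\varphi+1$ and $t\geq2\varphi$ on $t=i$ (with $\max(1,2\varphi)$ the binding one), and observe that $j>i\geq2\varphi$ forces $m_i=m_j=0$, so the conjugating power in (\ref{normalizadas}) is trivial. The only cosmetic difference is that the paper parametrizes the second index as $j:=n-v-1$ while you take $j:=t+u+1$, which is the same substitution.
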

\begin{proof}
For $r=s=0$, conditions (\ref{unifiedrelconds}) become
$$
u\geq0<t,v,\quad t+u+v=n-2,\quad
v+u\leq p-3\quad \text{and}\quad
t\leq 2(p-v-u-2).
$$
The first two of these conditions allow us to write the third one as $\varphi+1\leq t$, and the fourth one as $\max(1,2\varphi)\leq t$, the latter one being in fact stronger than the former one. Thus, conditions (\ref{unifiedrelconds}) simplify to 
$$
u\geq0<v,\quad t+u+v=n-2\quad
\text{and}\quad
\max(1,2\varphi)\leq t.
$$
Since $u$ is determined by the second of these conditions and the values of  $t$ and $v$, conditions (\ref{unifiedrelconds}) can further be simplified to
$$
0<v\leq n-t-2\quad\text{and}\quad
\max(1,2\varphi)\leq t.
$$
Under these conditions $n-v-1>t\geq2\varphi$, so that $m(t)=m(n-v-1)=0$, and (\ref{normalizadas}) then becomes $g_t\,\&\,\overline{g_{n-v-1}}g_{n-1}$. Take $i:=t$ and $j:=n-v-1$ to get the asserted expressions.
\end{proof}

\begin{example}\label{changen=p}{\em
As a warm-up for the general situation, here is an alternative proof, based on the considerations above, of Theorem \ref{combgen} in the case $n=p$: The rules 
$$
\begin{tabular}{lcr}
$g_i\mapsto h_i$ & $i=1,2$ & 
$g_i \mapsfrom h_i$ \\
$g_i \mapsto h_2h_3\cdots h_i$ & 
\hspace{.5cm}for $3\leq i\leq p-1\hspace{.5cm}$ &
$\overline{g_{i-1}}g_i \mapsfrom h_i$
\end{tabular}
$$
define an isomorphism between $B_p(p\times2)$ and the group with generators $h_1,h_2,\ldots,h_{p-1}$ subject to the relations 
\begin{align*}
h_1\;\&\; h_{j+1}h_{j+2}\cdots h_{p-1}, & 
\text{ \ for $1<j\leq p-2$,}\\
h_2h_3\cdots h_i\;\&\; h_{j+1}h_{j+2}\cdots h_{p-1}, & 
\text{ \ for $2\leq i<j\leq p-2$.}
\end{align*}
An easy inductive argument shows that these relations can equivalently be written as $h_i\;\&\;h_j$ for $|i-j|>1$ (see the first inductive argument in the proof of Proposition \ref{bestorganization}). This yields the right-angled Artin group structure on $B_p(p\times 2)$ since, as proved in the next section, all relations (\ref{normalizadas}) can be derived from those having $r=s=0$, when $n=p$. 
}\end{example}

Set $\ell(i):=2\lfloor(i+1)/2\rfloor$. So $\ell(i)=i$ for even $i$, while $\ell(i)=i+1$ for odd $i$. The following two key facts arose after extensive computer calculations.
\begin{proposition}\label{otrasrels}
Assume $n\leq 2p-5$ and fix integers $i$ and $j$ with $1\leq i<2\varphi$ and $\ell(i)<j\leq n-2$ (so $\varphi>0$ and $n>p$). Then $g_i\,\&\,\overline{g_j}g_{n-1}$ is the instance of (\ref{normalizadas}) arising from the case of (\ref{unifiedrelconds}) with parameters $(r,s,t,u,v)$ given by
\begin{align*}
(r,s,&t,u,v)\\
&=\left(\varphi+\ell(i)-i-\left\lfloor\frac{j-1}2\right\rfloor,\left\lfloor\frac{j+\ell(i)}2\right\rfloor-i, i, \left\lfloor\frac{j-\ell(i)-1}2\right\rfloor, p+i-\ell(i)-2-\left\lfloor\frac{j}2\right\rfloor\right)
\end{align*}
provided $\varphi>\left\lfloor\frac{j+i-\ell(i)-1}2\right\rfloor$, and by
$$
(r,s,t,u,v)=\left(0\hspace{.6mm},\,\, \varphi-i+\left\lfloor\frac{i+1}2\right\rfloor,\,\, i\hspace{.6mm},\,\, j-\varphi-\left\lfloor\frac{i+3}2\right\rfloor,\,\, n-j-1\right)
$$
provided $\varphi\leq\left\lfloor\frac{j+i-\ell(i)-1}2\right\rfloor$.
\end{proposition}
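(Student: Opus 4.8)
The plan is to read the statement as a bookkeeping identification: for each of the two prescribed tuples $(r,s,t,u,v)$ I must verify (a) that it satisfies the admissibility conditions (\ref{unifiedrelconds}), and (b) that the normalized relation (\ref{normalizadas}) it produces is \emph{literally} $g_i\,\&\,\overline{g_j}g_{n-1}$. Task (b) amounts to three scalar identities: the left-hand factor of (\ref{normalizadas}) forces $t=i$; the subscript of the conjugated generator forces $s+t+u+1=j$; and the conjugating exponent $m(t)-m(s+t+u+1)-s$ must vanish, i.e.\ $m_i-m_j-s=0$. The standing hypothesis $1\le i<2\varphi$ gives $\lfloor i/2\rfloor\le\varphi-1$, so that $m_i=\varphi-\lfloor i/2\rfloor$ throughout; this is exactly what keeps the exponent computation clean.

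Before the casework I would record the floor identities used repeatedly: $\lfloor i/2\rfloor+\lfloor(i+1)/2\rfloor=i$, $\lfloor(j-1)/2\rfloor+\lfloor j/2\rfloor=j-1$, and --- since $\ell(i)$ is even --- $\lfloor(j+\ell(i))/2\rfloor=\lfloor j/2\rfloor+\lfloor(i+1)/2\rfloor$. With these, $t=i$ and $s+t+u+1=j$ are immediate substitutions in either tuple. For the exponent, in the first tuple one computes $s=\lfloor j/2\rfloor-\lfloor i/2\rfloor$, so the exponent equals $\varphi-m_j-\lfloor j/2\rfloor$; I would then show that the case hypothesis $\varphi>\lfloor(j+i-\ell(i)-1)/2\rfloor$ forces $\varphi\ge\lfloor j/2\rfloor$, whence $m_j=\varphi-\lfloor j/2\rfloor$ and the exponent is $0$. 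In the second tuple $r=0$ and the complementary hypothesis forces $\varphi\le\lfloor j/2\rfloor$, so $m_j=0$; substituting $s=\varphi-i+\lfloor(i+1)/2\rfloor$ the exponent collapses to $i-\lfloor i/2\rfloor-\lfloor(i+1)/2\rfloor=0$. Thus the case split on $\varphi$ versus $\lfloor(j+i-\ell(i)-1)/2\rfloor$ is calibrated precisely to the sign of $\varphi-\lfloor j/2\rfloor$, i.e.\ to whether $m_j$ is positive or zero, which in turn dictates the value of $s$ needed to kill the exponent.

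For admissibility I would first dispatch the ``free'' constraints of (\ref{unifiedrelconds}). In both tuples one has $r+v=n-1-j$ (in the first this uses the floor identity above; in the second it is immediate), while $s+t+u=j-1$; hence $r+s+t+u+v=n-2$ automatically, and $r+v=n-1-j\ge1$ because $j\le n-2$, and $t=i\ge1>0$. Nonnegativity of $s,u,v$ is then a short check in each case from the case hypothesis together with $\ell(i)<j\le n-2$ and $n\le 2p-5$; note already here that $n\le 2p-5$ is consumed in securing $v\ge0$ in the first tuple, so the hypothesis is not merely needed for the geometric bounds.

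The main obstacle is the pair of ``geometric'' inequalities in (\ref{unifiedrelconds}). Writing $\iota(x,y)=\max(x,y)+1$, these read $\max(r,v)+\max(s,u)\le p-3$ and $i=t\le 2\bigl(p-2-\max(r,v)-\max(s,u)\bigr)$, and it is here that $n\le 2p-5$ is genuinely needed to create the required slack. I would evaluate $\max(r,v)$ and $\max(s,u)$ explicitly from each parameter formula; this forces a secondary split according to which of $r,v$, and which of $s,u$, is the larger, governed by the relative sizes of $i$, $j$ and $\varphi$, after which each resulting inequality is linear and can be verified directly. I expect this casework --- rather than any conceptual point --- to be the delicate part, and it is presumably the computational content alluded to in the remark that ``these two key facts arose after extensive computer calculations.''
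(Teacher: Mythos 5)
Your route is the same as the paper's: reduce the claim to the three scalar identities $t=i$, $s+t+u+1=j$ and $m(t)-m(s+t+u+1)-s=0$, check them with floor identities, and then verify the admissibility conditions (\ref{unifiedrelconds}); your observation that the case split is exactly what makes $m_j=\varphi-\lfloor j/2\rfloor$ in the first case and $m_j=0$ in the second is the same observation the paper records as ``$j\leq2\varphi+1$, holding with a strict inequality when $i$ is even''. The identity computations you give are correct.

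There is, however, one concrete error, and it sits at the only genuinely delicate point. You claim that $n\leq2p-5$ ``is consumed in securing $v\geq0$ in the first tuple''. That is false: there $v=p+i-\ell(i)-2-\lfloor j/2\rfloor\geq p-3-\lfloor j/2\rfloor$, and the first-case hypothesis gives $\lfloor j/2\rfloor\leq\varphi=n-p$, so $v\geq 2p-n-3\geq0$ already under the section-wide assumption $n\leq2p-3$. Where $n\leq2p-5$ is really needed (and the paper pinpoints this) is in resolving your ``secondary split'' for $\max(r,v)$: one must prove $v\geq r$, and when $i$ is odd and $j$ is even a direct computation gives $v-r=2p-n-5$, so this ordering is \emph{equivalent} to the hypothesis. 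The point is not cosmetic: if $n=2p-4$ with $i$ odd and $j$ even, then $r=v+1$, so $\iota(r,v)=r+1$, $\iota(s,u)=s+1$, and the last condition of (\ref{unifiedrelconds}) reduces to $i\leq i-1$. Hence in the subcase $r>v$ the linear inequalities you intend to ``verify directly'' are false, and your plan must instead show that this subcase cannot occur --- which is precisely where the hypothesis enters. The casework should therefore be organized as in the paper: first establish the orderings $s\geq u\geq0$, $v\geq r>0$ for the first tuple (respectively $u\geq s\geq0$, $v>r=0$ for the second), which pins down $\iota(r,v)$ and $\iota(s,u)$, and only then check the two geometric inequalities; they hold with essentially no slack, since for the first tuple $2(p-\iota(r,v)-\iota(s,u))$ equals $i$ when $i$ is even and $i+1$ when $i$ is odd. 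A last small omission: your nonnegativity list skips $r$, and $r\geq0$ is required by (\ref{unifiedrelconds}); in the first tuple $r>0$ follows from the case hypothesis.
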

\begin{proof}
If $\varphi>\left\lfloor\frac{j+i-\ell(i)-1}2\right\rfloor$ (so $j\leq2\varphi+1$, holding with a strict inequality when $i$ is even), straightforward verification shows
\begin{equation}\label{laqueusalahipotesis}
s\geq u\geq0, \quad t>0, \quad v\geq r>0 \quad \mbox{and} \quad r+s+t+u+v=n-2,
\end{equation}
which yields the conditions in the first line of (\ref{unifiedrelconds}) together with the equalities $\iota(r,v)=v+1$ and $\iota(s,u)=s+1$. The last two conditions in (\ref{unifiedrelconds}) are then verified by direct inspection. Likewise, direct verification shows $t=i$, $s+t+u+1=j$ and $m(i)-m(j)-s=0$, giving the asserted form of (\ref{normalizadas}). Details can easily be completed by the reader, once the following two useful observations are remarked. First, the verification of $m(i)-m(j)-s=0$ uses the observation that $j\leq2\varphi+1$, which holds with strict inequality when $i$ is even. Second, the hypothesis $n\leq 2p-5$ is needed (only) for the verification of $v\geq r$ in (\ref{laqueusalahipotesis}) when $i$ is odd.

Details for $\varphi\leq\left\lfloor\frac{j+i-\ell(i)-1}2\right\rfloor$ are parallel and much simpler. In fact this time there is no need of giving special indications for the reader interested in working out details, other than a suggestion to start by checking the conditions $u\geq s\geq0$, $\,t>0$, $\,v>r=0$ \,and\, $r+s+t+u+v=n-2$.
\end{proof}

Relations in Propositions \ref{relsconab0} and \ref{otrasrels} can be summarized as
\begin{equation}\label{allrels}
\mbox{$g_i\,\&\,\overline{g_{j}}g_{n-1}$ \ for \ $1\leq i<j\leq n-2$,}
\end{equation}
i.e., those identified in Example \ref{changen=p} as giving the right-angled Artin group structure when $n=p$, \emph{except} for the cases where $j=i+1$ with $i$ odd and smaller than $2\varphi$. In such exceptional cases, the missing relation (in a slightly different format) is accounted for by:

\begin{proposition}\label{relsfinales}
For an odd integer $i$ with $1\leq i<2\varphi$, the relation $g_i\,\&\,g_{n-1}\overline{g_{i+1}}$ is the instance of (\ref{normalizadas}) arising from the case of (\ref{unifiedrelconds}) with parameters
$$
(r,s,t,u,v)=\left(\varphi-\left\lfloor\frac{i}2\right\rfloor,\,\, 0\hspace{.6mm},\,\, i\hspace{.6mm},\,\, 0\hspace{.6mm},\,\, p-2-\left\lfloor\frac{i+1}2\right\rfloor\right).
$$ 
\end{proposition}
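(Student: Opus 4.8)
The plan is to verify directly that the proposed quintuple $(r,s,t,u,v)=\bigl(\varphi-\lfloor i/2\rfloor,\,0,\,i,\,0,\,p-2-\lfloor(i+1)/2\rfloor\bigr)$ lies in the admissible region (\ref{unifiedrelconds}), and then to substitute it into the normalized relation (\ref{normalizadas}) and simplify. The argument runs parallel to the proof of Proposition \ref{otrasrels}; the only genuine novelty is that the conjugating exponent in (\ref{normalizadas}) now equals $1$ instead of $0$, and this is exactly what produces the ``slightly different format'' $g_i\,\&\,g_{n-1}\overline{g_{i+1}}$ of the relation.

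First I would fix the parity by writing $i=2k+1$, so that $\lfloor i/2\rfloor=k$ and $\lfloor(i+1)/2\rfloor=k+1$, giving $r=\varphi-k$, $s=u=0$, $t=i$ and $v=p-3-k$. The sum $r+s+t+u+v$ then telescopes to $\varphi+p-2=n-2$, which is the second condition in (\ref{unifiedrelconds}). For the nonnegativity constraints: $s=u=0$ is immediate, $r=\varphi-k\geq1$ follows from $i<2\varphi$ (whence $k\leq\varphi-1$), and $v=p-3-k\geq1$ follows from $k\leq\varphi-1\leq p-4$, using the section's standing hypothesis $n\leq2p-3$, i.e.\ $\varphi\leq p-3$; in particular $t=i>0$ and $r+v>0$. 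Next I would note that $v-r=p-3-\varphi\geq0$, so $\max(r,v)=v$ and hence $\iota(r,v)=v+1=p-2-k$, while $\iota(s,u)=1$. The remaining two conditions then reduce to $\iota(r,v)+\iota(s,u)=p-1-k\leq p-1$ and $t=i\leq2\bigl(p-\iota(r,v)-\iota(s,u)\bigr)=2(k+1)=i+1$, both evidently valid.

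Having placed the quintuple inside the admissible region, I would read off (\ref{normalizadas}). Since $s=u=0$ and $t=i$ we have $s+t+u+1=i+1$, so the relation is $g_i\,\&\,\overline{g_{i+1}}^{\,g_{n-1}^{\,m(i)-m(i+1)}}g_{n-1}$. The decisive computation is the exponent: from $k\leq\varphi-1$ we get $\varphi-k\geq1$ and $\varphi-(k+1)\geq0$, whence $m(i)=\varphi-k$ and $m(i+1)=\varphi-(k+1)$, and therefore $m(i)-m(i+1)=1$. Substituting and using $y^x=xy\overline{x}$ yields $\overline{g_{i+1}}^{\,g_{n-1}}g_{n-1}=g_{n-1}\overline{g_{i+1}}\,\overline{g_{n-1}}\,g_{n-1}=g_{n-1}\overline{g_{i+1}}$, so the relation collapses exactly to $g_i\,\&\,g_{n-1}\overline{g_{i+1}}$, as asserted.

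I do not anticipate any serious obstacle: the whole argument is a bookkeeping verification once $i=2k+1$ is substituted. The single point demanding attention is the evaluation of the $\max$ defining $m(i)$ and $m(i+1)$, since the entire content of the proposition is that this exponent equals $1$ rather than $0$. That nonzero exponent is precisely what moves the $g_{n-1}$ to the left of $\overline{g_{i+1}}$, supplying the commutator missing from the family (\ref{allrels}) of Proposition \ref{otrasrels} in the exceptional cases $j=i+1$ with $i$ odd and $i<2\varphi$.
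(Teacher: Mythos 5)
Your proposal is correct and follows essentially the same route as the paper's own (very terse) proof: a direct verification that the quintuple satisfies (\ref{unifiedrelconds}), noting $s=u=0$ and $v\geq r>0$, followed by the computation $m(i)-m(i+1)=1$ which turns (\ref{normalizadas}) into $g_i\,\&\,\overline{g_{i+1}}^{\,g_{n-1}}g_{n-1}=g_i\,\&\,g_{n-1}\overline{g_{i+1}}$. Your write-up simply fills in the bookkeeping the paper leaves to the reader, including the observation (also made in the paper) that only the standing hypothesis $n\leq 2p-3$, i.e.\ $\varphi\leq p-3$, is needed rather than $n\leq 2p-5$.
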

\begin{proof}
This is again a straightforward verification. Details are more accessible to check by noticing first that $s=u=0$ and $v\geq r>0$. The relation in this case is $g_i\,\&\,\overline{g_{i+1}}^{g_{n-1}}g_{n-1}$, a rewrite of the one asserted. Worth remarking is the fact that the proof does not require the stronger hypothesis $n\leq2p-5$ needed in Proposition \ref{otrasrels} ---this section's general hypothesis $p\leq n\leq 2p-3$ suffices.
\end{proof}

\section{\texorpdfstring{Right-angled Artin group structure on $B_n(p\times2)$}{Proof conclusion}}
Throughout this section we assume $p\leq n\leq2p-5$. The relations in $B_n(p\times2)$ pinpointed by Propositions \ref{relsconab0}, \ref{otrasrels} and \ref{relsfinales} can be summarized as
\begin{align}
g_i\,\&\,\overline{g_j}g_{n-1}, &\text{ \ for $\max(1,2\varphi)\leq i<j\leq n-2$,}\nonumber\\
g_i\,\&\,\overline{g_j}g_{n-1}, &\text{ \ for $1\leq i=2k+\epsilon<2\varphi$ and $i+\epsilon<j\leq n-2$, where $\epsilon\in\{0,1\}$,}\label{codificados}\\
g_i\,\&\,g_{n-1}\overline{g_{i+1}}, &\text{ \ for $1\leq i=2k+1<2\varphi$.}\nonumber
\end{align}
Let $G_n$ stand for the group generated by symbols $g_1,g_2,\ldots,g_{n-1}$ subject exclusively to the three types of relations (\ref{codificados}). In particular, there is a canonical projection
\begin{equation}\label{canoproye}
\pi\colon G_n\to B_n(p\times2).
\end{equation}

In this section we show (see Proposition \ref{cambiodebasefinal}) that relations (\ref{codificados}) encode the right angled Artin group $H_{n}$ generated by elements $h_1,h_2,\ldots,h_{n-1}$ subject to the commutativity relations
\begin{equation}\label{artingenesis}
h_i\,\&\,h_j \text{ \ for \ } |i-j|>1.
\end{equation}
We then prove (see Proposition \ref{encompassing}) that relations (\ref{codificados}) actually encompass all relations (\ref{normalizadas}), thus settling the presentation asserted in Theorem \ref{combgen} for $B_n(p\times2)$ as a right-angled Artin group when $p\leq n\leq2p-5$.

\medskip
Concerning the first goal, and as a result of the deviation in Proposition \ref{relsfinales} from (\ref{allrels}), we start with an adaptation of the change of basis in Example \ref{changen=p}. Namely, the formul\ae\ $\phi(g_1)=h_1$ and, for $i\geq2$,
$$
\phi(g_i)=\begin{cases}
h(2,3)h(4,5)\cdots h(2k-2,2k-1)h_{2k}, & \hspace{-3mm}i=2k\leq2\varphi;\\
h(2,3)h(4,5)\cdots h(2k-2,2k-1)h(2k,2k+1), & \hspace{-3mm}i =2k+1 \leq2\varphi+1;\\
h(2,3)h(4,5)\cdots h(2\varphi-2,2\varphi-1)h(2\varphi,2\varphi+1)h_{2\varphi+2}\cdots h_i, & \hspace{-3mm}2\varphi+2\leq i\leq n-1,
\end{cases}
$$
determine a morphism $\phi\colon F(g_1,\ldots,g_{n-1})\to F(h_1,\ldots,h_{n-1})$, where $F(x_1,\ldots,x_m)$ stands for the free group generated by $x_1,\ldots,x_m$, and $h(a,b):=h_bh_a$ for $1\leq a,b\leq n-1$ (note the swapping of factors with respect to parameters). It is clear that $\phi$ is surjective. Actually $\phi$ is an isomorphism, as free groups of finite rank are Hopfian. The obvious relation
\begin{equation}\label{furthernotice}
\phi(\overline{g_i}g_{n-1})=\begin{cases}
h_{i+1}h_{i+2}\cdots h_{n-1}, & \!\!\!\!\mbox{$i\geq\max(2,2\varphi+1)$;} \\
h(2\ell+2,2\ell+3)\cdots h(2\varphi,2\varphi+1) h_{2\varphi+2} \cdots h_{n-1}, & \!\!\!\!\mbox{$2\leq i=2\ell+1<2\varphi$;}\\
\overline{h_{2\ell}}\cdot h(2\ell,2\ell+1)\cdots h(2\varphi,2\varphi+1) h_{2\varphi+2} \cdots h_{n-1}, & 
\!\!\!\!\mbox{$2\leq i=2\ell\leq2\varphi$}
\end{cases}
\end{equation}
will be needed at the beginning of the proof of Proposition \ref{cambiodebasefinal}.

\begin{proposition}\label{cambiodebasefinal}
For $p\leq n\leq2p-5$, the set of $\phi$-images of relations~(\ref{codificados}) is equivalent to the set of relations (\ref{artingenesis}). Consequently, $\phi$ induces an isomorphism (also denoted by) $\phi\colon G_n\to H_n$. 
\end{proposition}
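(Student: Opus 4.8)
The plan is to exploit that $\phi\colon F(g_1,\ldots,g_{n-1})\to F(h_1,\ldots,h_{n-1})$ is an isomorphism of free groups, so it carries the normal closure of the relators~(\ref{codificados}) onto the normal closure of their $\phi$-images. Writing $\Phi$ for the set of $\phi$-images of~(\ref{codificados}) and $R$ for the relators in~(\ref{artingenesis}), it therefore suffices to prove that $\Phi$ and $R$ generate the same normal subgroup of $F(h_1,\ldots,h_{n-1})$; granting this, $\phi$ descends to the asserted isomorphism $G_n\cong H_n$ at once. I would prove the equality of normal closures as two inclusions, using throughout the elementary observation that if two words $u,w$ in the $h_k$'s have supports $\operatorname{supp}(u),\operatorname{supp}(w)$ (the sets of occurring indices) all of whose cross-differences exceed $1$, then $u\,\&\,w$ holds modulo $R$.

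First I would show $\Phi\subseteq\langle\langle R\rangle\rangle$, i.e.\ that every $\phi$-image of~(\ref{codificados}) holds in $H_n$. From the definition of $\phi$ one reads off $\operatorname{supp}(\phi(g_i))\subseteq\{1,\ldots,i\}$, with top index $i$. For each of the three families in~(\ref{codificados}) I would compute the support of the partner word through~(\ref{furthernotice}) and check it sits inside $\{i+2,\ldots,n-1\}$: for the first family $\phi(\overline{g_j}g_{n-1})=h_{j+1}\cdots h_{n-1}$ has support bounded below by $j+1\geq i+2$; for the second, the parity shift by $\epsilon$ together with the odd/even cases of~(\ref{furthernotice}) forces the partner's support to start at an index $\geq i+2$; and for the third a one-line cancellation $\phi(g_{n-1})\overline{h_{i+1}}=\phi(g_i)\,S$ with $\operatorname{supp}(S)\subseteq\{i+2,\ldots,n-1\}$ reduces the claim to $\phi(g_i)\,\&\,S$. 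In every case the gap-at-least-$2$ criterion applies letterwise, so the relation is a consequence of $R$.

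Next I would prove the reverse inclusion $R\subseteq\langle\langle\Phi\rangle\rangle$ by generalizing the ``easy inductive argument'' of Example~\ref{changen=p} (detailed in Proposition~\ref{bestorganization}). Taken together the three families of~(\ref{codificados}) supply, after applying $\phi$, the grouped commutations $\phi(g_i)\,\&\,\phi(\overline{g_j}g_{n-1})$ for all $1\leq i<j\leq n-2$ — the cases $j=i+1$ with $i$ odd and $i<2\varphi$ appearing in the variant form $\phi(g_i)\,\&\,\phi(g_{n-1}\overline{g_{i+1}})$ contributed by Proposition~\ref{relsfinales}. From this list I would first difference in $j$ — combining the relations for $j$ and $j-1$ to peel off a single generator and deduce $\phi(g_i)\,\&\,h_j$ — and then difference in $i$ to strip $\phi(g_i)$ down to its top letter, obtaining $h_a\,\&\,h_b$ for every $|a-b|>1$. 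For indices above $2\varphi+1$, where $\phi$ is an honest increasing product, this telescoping runs verbatim as in Example~\ref{changen=p}.

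The hard part will be the low-index range (indices $\leq 2\varphi+1$), where $\phi$ inserts the transposed pairs $h(a,b)=h_bh_a$ and the clean telescoping breaks. There the differencing has to be carried out parity-by-parity, and Proposition~\ref{relsfinales} must be invoked at precisely the adjacent steps $j=i+1$ (odd $i$) that would otherwise either stall the peeling or, if handled naively, spuriously force an adjacent commutation $h_a\,\&\,h_{a+1}$ and collapse $H_n$. Verifying that the induction recovers exactly $R$ — neither more nor fewer relations — is where the parity design built into~(\ref{codificados}) does its real work, and this bookkeeping is the main obstacle I anticipate.
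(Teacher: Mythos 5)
Your framing---equality of normal closures in $F(h_1,\ldots,h_{n-1})$, proved as two inclusions---is exactly the paper's, and your first inclusion is essentially complete: the gap-at-least-two support criterion disposes of the first two families of (\ref{codificados}) just as the paper's ``direct inspection'' of items (\ref{AA})--(\ref{CC}) does, and your treatment of the third family is the same manipulation as the paper's computation ending in (\ref{botondemuestra}). One small caveat there: your ``one-line cancellation'' $\phi(g_{n-1})\overline{h_{i+1}}=\phi(g_i)\,S$ is not an identity in the free group; the letter $\overline{h_{i+1}}$ must first be commuted past $h_{i+3}\cdots h_{n-1}$, so the equation only holds modulo (\ref{artingenesis}). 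That is harmless in this direction, but it should be stated.

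The genuine gap is in the reverse inclusion. For $v\geq2\varphi+2$ your $j$-then-$i$ differencing is indeed the verbatim extension of Example \ref{changen=p}, and it reproduces the first induction in the paper's Proposition \ref{bestorganization}. But for the range $3\leq v\leq2\varphi+1$ you only describe what would have to be done and explicitly defer it as ``the main obstacle I anticipate''---and that deferred step is not routine bookkeeping; it is the core of the paper's proof. Concretely, the paper runs a second induction on $\lfloor v/2\rfloor$, treating $v=2\omega$ and $v=2\omega+1$ simultaneously: the commutations of $h_{2\omega}$ and $h_{2\omega+1}$ with $h_i$ for $i\leq2\omega-2$ are extracted from (\ref{BBbb}), (\ref{BBcc}), (\ref{CCbb}), (\ref{CCcc}) at the two special values $j=2\omega-1,2\omega$, using Lemma \ref{propiedadeselementales} together with \emph{all} commutations already established at higher indices; and the remaining relation $h_{2\omega-1}\,\&\,h_{2\omega+1}$---which no differencing of the relations of types (\ref{AA})--(\ref{CC}) can produce, precisely because those families omit $j=i+1$ for odd $i<2\varphi$---requires a separate computation that rewrites the (\ref{DD}) relation at $i=2\omega-1$ as a conjugation identity and again feeds in the high-index commutations from the first induction. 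Until this is carried out, your argument yields $h_u\,\&\,h_v$ only for $v\geq2\varphi+2$, so the proposal as written does not yet prove the proposition: the hard half of Proposition \ref{bestorganization} is acknowledged but missing.
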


\begin{proposition}\label{encompassing}
For $p\leq n\leq2p-5$, the projection in (\ref{canoproye}) is in fact an isomorphism.
\end{proposition}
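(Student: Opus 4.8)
The plan is to show that the surjection $\pi$ of (\ref{canoproye}) is also injective. Since $B_n(p\times2)$ is presented by the generators $g_1,\dots,g_{n-1}$ together with the full set of reduced relations (\ref{normalizadas}) (subject to (\ref{unifiedrelconds})), whereas $G_n$ is presented by the same generators subject only to the subset (\ref{codificados}), the map $\pi$ is an isomorphism precisely when every relation (\ref{normalizadas}) already holds in $G_n$. By Proposition \ref{cambiodebasefinal} the isomorphism $\phi\colon G_n\to H_n$ identifies $G_n$ with the explicit right-angled Artin group $H_n$, in which two elements commute as soon as their supports (the sets of generators $h_i$ occurring in reduced form) lie at pairwise distance greater than $1$. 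So I would transport each relation (\ref{normalizadas}) across $\phi$ and verify that the corresponding commutator is trivial in $H_n$ by a support computation.

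The organizing idea is to fix the pair $(t,j)$ with $j=s+t+u+1$ and let only $s$ vary. A direct computation with the conjugation-power identities shows that lowering the exponent $N=m(t)-m(j)-s$ by one amounts to conjugating the right-hand factor $\overline{g_j}^{\,g_{n-1}^{N}}g_{n-1}$ by $g_{n-1}^{-1}$. Consequently all members of the $(t,j)$-family are $g_{n-1}$-conjugates of the distinguished member with $N=0$, i.e.\ of the relation $g_t\,\&\,\overline{g_j}g_{n-1}$, which for $j\geq t+2$ is exactly one of the relations (\ref{codificados}) produced in Propositions \ref{relsconab0} and \ref{otrasrels} (those propositions verify $m(t)-m(j)-s=0$ for their parameters); the families with $j=t+1$ are singletons already contained in (\ref{codificados}), via Proposition \ref{relsfinales} in the flipped odd case. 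Writing $M:=s-\bigl(m(t)-m(j)\bigr)$, the relation (\ref{normalizadas}) for parameter $s$ is therefore equivalent, using $a\,\&\,b^{c}\Leftrightarrow a^{c^{-1}}\,\&\,b$, to the commutation $g_t^{\,g_{n-1}^{M}}\,\&\,\overline{g_j}g_{n-1}$.

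It then remains to check this last commutation in $H_n$. Using (\ref{furthernotice}) together with the formula for $\phi(g_{n-1})$, the factor $\phi(\overline{g_j}g_{n-1})$ is supported in the ``top'' block $\{j+1,\dots,n-1\}$ (in the generic range $j>2\varphi+1$), while conjugating $\phi(g_t)$ by $\phi(g_{n-1})^{M}$ raises the maximal support index by at most $M$ (and lowers it when $M<0$); an induction on $M$, based on the fact that each generator of $\phi(g_{n-1})$ occurs once and only $h_{c\pm1}$ fails to commute with a top generator $h_c$, gives $\max\operatorname{supp}\phi\bigl(g_t^{\,g_{n-1}^{M}}\bigr)\leq t+M$. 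Since $t+M=t+s-\bigl(m(t)-m(j)\bigr)\leq t+s+u=j-1$ because $u\geq0$, the two supports are separated by a gap larger than $1$ and the commutation holds. I expect the main obstacle to be the non-generic small-index ranges, where $m_t$ and $m_j$ are nonzero and $\phi(g_t)$, $\phi(\overline{g_j}g_{n-1})$ carry the bipartite pattern $h(a,b)=h_bh_a$ together with the correcting factor $\overline{h_{2\ell}}$ of (\ref{furthernotice}); there the support bookkeeping becomes parity-sensitive and must be carried out along the same case division $\varphi>\lfloor\frac{j+i-\ell(i)-1}2\rfloor$ versus $\varphi\leq\lfloor\frac{j+i-\ell(i)-1}2\rfloor$ used in Proposition \ref{otrasrels}, while the sign of $M$ (conjugation pushing support downward when $s<m(t)-m(j)$) must be accommodated separately. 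Once every $(t,j)$-family is disposed of, all relations (\ref{normalizadas}) hold in $G_n$, and $\pi$ is an isomorphism.
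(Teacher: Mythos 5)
Your overall strategy is the same as the paper's: push each relation (\ref{normalizadas}) through the isomorphism $\phi\colon G_n\to H_n$ of Proposition \ref{cambiodebasefinal} and kill the resulting commutator by showing its two factors are words on generators $h_i$ with index sets separated by a gap larger than $1$; indeed your normalized form $g_t^{\,g_{n-1}^{M}}\,\&\,\overline{g_j}g_{n-1}$ is exactly the conjugate (\ref{conmutador2}) that the paper analyzes. The gap is that your central quantitative claim is false. You assert that conjugation by $\phi(g_{n-1})$ raises the maximal support index by at most one, hence $\max\operatorname{supp}\phi\bigl(g_t^{\,g_{n-1}^{M}}\bigr)\leq t+M$. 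In the bipartite range this fails: for $\varphi\geq1$ take $t=1$, so $\phi(g_1)=h_1$; writing $\phi(g_{n-1})=O(3,2\varphi+1)E(2,2\varphi)A(2\varphi+2,n-1)$, conjugation by $A$ fixes $h_1$, conjugation by $E$ (which contains $h_2$) yields $h_2h_1\overline{h_2}$, and conjugation by $O$ (which contains $h_3$) then yields $h_3h_2h_1\overline{h_2}\,\overline{h_3}$, whose support in $H_n$ is $\{1,2,3\}$. A single conjugation raised the top index by two, not one --- this is precisely the dichotomy of Lemma \ref{formulaschicas}(\ref{gen1}), where $w(x)^{\phi(g_{n-1})}=w(x+2)$ for odd $x\leq2\varphi$, and the corrected iterated bound is the one in Lemma \ref{formulaschicas}(\ref{gen3}). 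Nor is this a boundary nuisance: the legitimate parameters $p=10$, $n=13$ (so $\varphi=3$) and $(r,s,t,u,v)=(3,3,1,0,4)$ satisfy (\ref{unifiedrelconds}), give $M=1$, and $\max\operatorname{supp}\phi\bigl(g_1^{\,g_{n-1}}\bigr)=3>t+M=2$. Since for $\varphi=0$ (i.e.\ $n=p$, where everything is generic) the statement is already Example \ref{changen=p}, the range where your estimate breaks is the only range the proposition is really about.

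The issues you flag and defer at the end are therefore not finishing touches; they are the proof. When $M<0$ the form (\ref{conmutador2}) carries a negative conjugation exponent and your induction cannot even start; the paper's fix is to revert to the unconjugated commutator (\ref{conmutador1}), whose exponent is then nonnegative, and to control the right-hand factor by the \emph{downward} estimates of Lemma \ref{formulasgrandes}(\ref{ric1})--(\ref{ric3}). Likewise, your claim that $\phi(\overline{g_j}g_{n-1})$ is supported in $\{j+1,\dots,n-1\}$ holds only for $j\geq2\varphi+1$ or for odd $j\geq3$; for even $j\leq2\varphi$ formula (\ref{furthernotice}) begins with $\overline{h_{2\ell}}$, $2\ell=j$, so the support reaches down to $j$ and one then needs sharper, case-specific control of the left-hand factor than any uniform bound provides. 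Supplying correct two-sided estimates (Lemmas \ref{formulaschicas} and \ref{formulasgrandes}) and the parity-sensitive case analysis over $s\gtrless\ell$ and $s+u+\epsilon\gtrless2\ell$ constitutes essentially the whole body of the paper's argument. As written, your proposal establishes the claim only in the paper's ``simplest situation'' ($t\geq2\varphi$ with $j\geq2\varphi+1$); outside it the key estimate is incorrect, so the deferred cases cannot be completed along the lines you describe without replacing that estimate itself.
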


In proving Propositions \ref{cambiodebasefinal} and \ref{encompassing}, it is useful to bare in mind the following elementary facts:
\begin{lemma}\label{propiedadeselementales}
For elements $a$, $b$ and $c$ of a given group,
\begin{itemize}
\item relations $a\,\&\,b$ and $a\,\&\,bc$ are equivalent to relations $a\,\&\,b$ and $a\,\&\,c$;
\item relations $a\,\&\,b$ and $a\,\&\,cb$ are equivalent to relations $a\,\&\,b$ and $a\,\&\,c$;
\item relation $a\,\&\,b$ is equivalent to relation $a\,\&\,\overline{b}$.
\end{itemize}
\end{lemma}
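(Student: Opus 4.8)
The plan is to unwind the ampersand convention into ordinary commutation equations and to verify each of the three bullets by a one-line manipulation in an arbitrary group, keeping in mind throughout that each item asserts an \emph{equivalence of pairs of relations}, not of single relations. Recall that $a\,\&\,b$ is shorthand for $ab=ba$, so in each case I would argue the two implications separately.

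For the first bullet I would fix the ambient relation $a\,\&\,b$ and show that, relative to it, $a\,\&\,c$ and $a\,\&\,bc$ are interderivable. Assuming $a\,\&\,b$ and $a\,\&\,c$, the chain $a(bc)=(ab)c=(ba)c=b(ac)=b(ca)=(bc)a$ gives $a\,\&\,bc$. Conversely, assuming $a\,\&\,b$ and $a\,\&\,bc$, I would rewrite $a(bc)=(bc)a$ as $abc=bca$, replace $ab$ by $ba$ on the left to get $bac=bca$, and cancel the leading $b$ to obtain $ac=ca$. The second bullet is handled identically with $cb$ in place of $bc$: one direction is the chain $a(cb)=(ac)b=(ca)b=c(ab)=c(ba)=(cb)a$, and for the other I would rewrite $cba=cab$ using $ba=ab$ and cancel the common trailing $b$ from $acb=cab$. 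For the third bullet, multiplying $ab=ba$ on the left and on the right by $\overline{b}$ collapses to $\overline{b}a=a\overline{b}$, i.e.\ $a\,\&\,\overline{b}$; the reverse implication is the same computation with $b$ replaced by $\overline{b}$.

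I do not expect any genuine obstacle, since everything reduces to associativity, the defining commutations, and cancellation. The only point worth flagging is that the first two equivalences are valid only \emph{in the presence of} $a\,\&\,b$: by itself $a\,\&\,bc$ does not force $a\,\&\,c$. I would therefore state each item precisely as ``given $a\,\&\,b$, adjoining $a\,\&\,c$ is the same as adjoining $a\,\&\,bc$ (respectively $a\,\&\,cb$)'', which is exactly the form in which the lemma is later invoked to split off and recombine the compound commutation relations appearing in~(\ref{codificados}) and~(\ref{normalizadas}).
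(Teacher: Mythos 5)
Your proof is correct, and in fact the paper offers no proof at all for this lemma --- it is stated as a collection of ``elementary facts'' with the verification left implicit. Your computations (the associativity chains for the two compound bullets, cancellation for the converse directions, and conjugation by $\overline{b}$ for the third bullet) are exactly the routine checks the paper relies on, and your flagged caveat --- that the first two items are equivalences of \emph{pairs} of relations, since $a\,\&\,bc$ alone does not force $a\,\&\,c$ --- is a correct and careful reading of how the lemma is actually invoked in the proofs of Propositions \ref{cambiodebasefinal} and \ref{encompassing}.
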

 
\begin{proof}[Proof of Proposition \ref{cambiodebasefinal}]
The case $n=p$, i.e.~$\varphi=0$ has been addressed in Example \ref{changen=p}. Here we consider the situation for $\varphi>0$. Let us start by spelling out the set of $\phi$-images of relations~(\ref{codificados}):
\begin{enumerate}[(i)]
\item\label{AA} The $\phi$-image of $g_i\,\&\,\overline{g_j}g_{n-1}$ with $2\varphi\leq i<j\leq n-2$ is as follows:
\begin{enumerate}
\item\label{AAaa}
For $2\varphi+1\leq i<j\leq n-2$,
$$h(2,3)\cdots h(2\varphi,2\varphi+1)h_{2\varphi+2}\cdots h_i\,\,\&\,\,h_{j+1}\cdots h_{n-1}.$$

\item\label{AAbb}
For $2\varphi=i<j\leq n-2$,
$$h(2,3)\cdots h(2\varphi-2,2\varphi-1)h_{2\varphi}\,\,\&\,\,h_{j+1}\cdots h_{n-1}.$$
\end{enumerate}

\item\label{BB} The $\phi$-image of $g_i\,\&\,\overline{g_j}g_{n-1}$ with $2\leq i=2k<2\varphi$ and $i<j\leq n-2$ is as follows:
\begin{enumerate}
\item\label{BBaa}
For $2\leq i=2k<2\varphi$ and $2\varphi+1\leq j\leq n-2$,
$$h(2,3)\cdots h(2k-2,2k-1)h_{2k}\,\,\&\,\,h_{j+1}\cdots h_{n-1}.$$

\item\label{BBbb}
For $2\leq i=2k<2\varphi$ and $i<j=2\ell\leq2\varphi$,
$$h(2,3)\cdots h(2k-2,2k-1)h_{2k}\,\,\&\,\,\overline{h_{2\ell}}h(2\ell,2\ell+1)\cdots h(2\varphi,2\varphi+1)h_{2\varphi+2}\cdots h_{n-1}.$$

\item\label{BBcc}
For $2\leq i=2k<2\varphi$ and $i<j=2\ell+1<2\varphi$,
$$h(2,3)\cdots h(2k-2,2k-1)h_{2k}\,\,\&\,\,h(2\ell+2,2\ell+3)\cdots h(2\varphi,2\varphi+1)h_{2\varphi+2}\cdots h_{n-1}.$$
\end{enumerate}

\item\label{CC} The $\phi$-image of $g_i\,\&\,\overline{g_j}g_{n-1}$ with $1\leq i=2k+1<2\varphi$ and $i+1<j\leq n-2$ is as follows:
\begin{enumerate}
\item\label{CCaa}
For $1\leq i=2k+1<2\varphi$ and $2\varphi+1\leq j\leq n-2$,
$$h(2,3)\cdots h(2k,2k+1)\,\,\&\,\,h_{j+1}\cdots h_{n-1}.$$

\item\label{CCbb}
For $1\leq i=2k+1<2\varphi$ and $i+1<j=2\ell\leq2\varphi$,
$$h(2,3)\cdots h(2k,2k+1)\,\,\&\,\,\overline{h_{2\ell}}h(2\ell,2\ell+1)\cdots h(2\varphi,2\varphi+1)h_{2\varphi+2}\cdots h_{n-1}.$$

\item\label{CCcc}
For $1\leq i=2k+1<2\varphi$ and $i+1<j=2\ell+1<2\varphi$,
$$h(2,3)\cdots h(2k,2k+1)\,\,\&\,\,h(2\ell+2,2\ell+3)\cdots h(2\varphi,2\varphi+1)h_{2\varphi+2}\cdots h_{n-1}.$$
\end{enumerate}

\item\label{DD} For $1\leq i=2k+1<2\varphi$, the $\phi$-image of $g_i\,\&\,g_{n-1}\overline{g_{i+1}}$ is
\small{$$h(2,3)\cdots h(2k,2k+1)\,\,\&\,\,h(2,3)\cdots h(2\varphi,2\varphi+1)
h_{2\varphi+2}\cdots h_{n-1}\overline{h(2,3)\cdots h(2k,2k+1)
h_{2k+2}}.$$}
\end{enumerate}
Note that the three expressions $h(2,3)\cdots h(2k-2,2k-1)h_{2k}$ in item (\ref{BB}) become $h_2$ when $k=1$, just as $h(2,3)\cdots h(2\varphi-2,2\varphi-1)h_{2\varphi}$ in item (\ref{AAbb}) becomes $h_2$ when $\varphi=1$. However, by definition, all four expressions $h(2,3)\cdots h(2k,2k+1)$ in items (\ref{CC}) and (\ref{DD}) must be interpreted as $h_1$ when $k=0$. These conventions will be in force throughout this and the next proofs.

\smallskip
Direct inspection shows that relations (\ref{artingenesis}) imply all relations in (\ref{AA})--(\ref{CC}). As for (\ref{DD}), we have
\begin{align}
h&(2,3)\cdots h(2\varphi,2\varphi+1)
h_{2\varphi+2}\cdots h_{n-1}\overline{h(2,3)\cdots h(2k,2k+1)
h_{2k+2}}\nonumber\\
&=h(2,3)\cdots h(2\varphi,2\varphi+1)
h_{2\varphi+2}\cdots h_{n-1}\overline{h_{2k+2}}\cdot\overline{h(2,3)\cdots h(2k,2k+1)}\nonumber\\
&=\left(h(2k+2,2k+3)\cdots h(2\varphi,2\varphi+1)
h_{2\varphi+2}\cdots h_{n-1}\overline{h_{2k+2}}\right)^{h(2,3)\cdots h(2k,2k+1)}\nonumber\\
&=\mbox{\small$\left(h(2k+2,2k+3)\cdot\overline{h_{2k+2}}\cdot h(2k+4,2k+5)\cdots h(2\varphi,2\varphi+1)
h_{2\varphi+2}\cdots h_{n-1}\right)^{h(2,3)\cdots h(2k,2k+1)}$}\nonumber\\
&=\left(h_{2k+3}\cdot h(2k+4,2k+5)\cdots h(2\varphi,2\varphi+1)
h_{2\varphi+2}\cdots h_{n-1}\right)^{h(2,3)\cdots h(2k,2k+1)}.\label{botondemuestra}
\end{align}
Here the second-to-last equality holds by assuming (\ref{artingenesis}), so that $h_{2k+2}$ commutes with every $h_i$ with $2k+4\leq i\leq n-1$. Likewise, the last equality follows by definition, as $h(2k+2,2k+3)\overline{h_{2k+2}}=h_{2k+3}$. Using (\ref{artingenesis}) again, we see that the base and the exponent in the conjugation-power (\ref{botondemuestra}) commute, leading to the commutation relation in (\ref{DD}).

The proof is complete by Proposition \ref{bestorganization} below.
\end{proof}

\begin{proposition}\label{bestorganization}
Assume $\varphi>0$ and that all relations (\ref{AA})--(\ref{DD}) hold. Then, for $v\in\{3,4,\ldots,n-1\}$, relations $h_u\,\&\,h_v$ hold for all $u\in\{1,2,\ldots,v-2\}$.
\end{proposition}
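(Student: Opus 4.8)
The plan is to derive the standard commutation relations (\ref{artingenesis}) from the hypotheses (\ref{AA})--(\ref{DD}) by repeated use of the peeling moves of Lemma \ref{propiedadeselementales}, organised as a \emph{descending} induction on $v$. I would treat $v=n-1,n-2,\dots,3$ in turn, the inductive hypothesis being that $h_u\,\&\,h_w$ has already been established for every $w>v$ and every $u\le w-2$. The descending direction is essential: each right-hand side of (\ref{AA})--(\ref{DD}) is a (possibly twisted) suffix reaching all the way up to $h_{n-1}$, and only by first clearing the larger indices does the hypothesis let me strip such a suffix down to the single letter $h_v$. Each left-hand side is exactly a head word $\phi(g_i)$, which is a scrambled product of $h_2,\dots,h_i$ (with $\phi(g_1)=h_1$); so once the suffix is gone I would finish with a short inner induction on the head length $i$, peeling off $h_2,\dots,h_{i-1}$---available from the inner hypothesis because each already commutes with $h_v$---to leave $h_i\,\&\,h_v$.

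For $v\ge2\varphi+2$ this is the ``first inductive argument'', running exactly as the warm-up in Example \ref{changen=p}. Here the relevant relations are the clean-tail ones (\ref{AAaa}), (\ref{AAbb}), (\ref{BBaa}) and (\ref{CCaa}) with $j=v-1\ (\ge2\varphi+1)$, whose tail is the honest suffix $h_vh_{v+1}\cdots h_{n-1}$. Since every head index is at most $v-2$, the inductive hypothesis gives $\phi(g_i)\,\&\,h_w$ for each $w>v$; peeling these letters off the right end reduces the relation to $\phi(g_i)\,\&\,h_v$, and the head-decoding inner induction then produces $h_u\,\&\,h_v$ for all $u\le v-2$. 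A bookkeeping check against the ranges in (\ref{codificados}) confirms that the needed relation (head index $i=u$, $j=v-1$) is present for every $u\le v-2$ in this regime.

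The substance lies in the range $v\le2\varphi+1$, where no clean suffix reaches $h_v$ and one must use the twisted relations (\ref{BBbb}), (\ref{BBcc}), (\ref{CCbb}), (\ref{CCcc}) together with the exceptional (\ref{DD}). The key structural observation I would exploit is that $h_{2\ell}$ and $h_{2\ell+1}$ are entangled through the block $h(2\ell,2\ell+1)=h_{2\ell+1}h_{2\ell}$, so the two claims for $v=2\ell$ and $v=2\ell+1$ cannot be separated and must be proved \emph{simultaneously}, by a descending induction on the pairs $(2\ell,2\ell+1)$ with $\ell=\varphi,\varphi-1,\dots,1$. Writing $x:=h_{2\ell}$ and $y:=h_{2\ell+1}$, I would take the relation (\ref{BBcc})/(\ref{CCcc}) with $j=2\ell-1$ and the relation (\ref{BBbb})/(\ref{CCbb}) with $j=2\ell$; in both, the entire part of the tail with indices $\ge2\ell+2$ sits at the right end and is erased by the inductive hypothesis, leaving
\[
\phi(g_i)\,\&\,yx\qquad\text{and}\qquad \phi(g_i)\,\&\,\overline{x}\,y\,x .
\]
The identity $x=(yx)(\overline{x}\,y\,x)^{-1}$ then forces $\phi(g_i)\,\&\,x$, after which Lemma \ref{propiedadeselementales} gives $\phi(g_i)\,\&\,y$; decoding the heads yields $h_u\,\&\,h_{2\ell}$ and $h_u\,\&\,h_{2\ell+1}$ for all $u\le2\ell-2$. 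The single leftover, the odd ``gap-two'' relation $h_{2\ell-1}\,\&\,h_{2\ell+1}$, is exactly what (\ref{DD}) supplies: after the same absorption of the high block (using, as in the computation (\ref{botondemuestra}), the cancellation $h(2k+2,2k+3)\overline{h_{2k+2}}=h_{2k+3}$ and the fact that $h_{2k+2}$ commutes with everything of strictly higher index) the relation collapses to $h_{2\ell-1}\,\&\,h_{2\ell+1}$. At the very bottom, the pair $(2,3)$ degenerates and $h_1\,\&\,h_3$ comes from (\ref{DD}) with $i=1$ alone.

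The main obstacle is precisely this entanglement in the range $v\le2\varphi+1$: a naive one-index-at-a-time induction stalls, because reducing the tail for $h_{2\ell}$ needs $\phi(g_i)\,\&\,h_{2\ell+1}$, while reducing it for $h_{2\ell+1}$ produces the conjugate $\overline{x}\,y\,x$ and needs $\phi(g_i)\,\&\,h_{2\ell}$. The joint-pair device above, turning the two twisted relations into the clean conclusion $\phi(g_i)\,\&\,x$ through $x=(yx)(\overline{x}\,y\,x)^{-1}$, is what breaks the circle. Beyond this, the remaining effort is bookkeeping: verifying that at each use of Lemma \ref{propiedadeselementales} the invoked commutation genuinely predates it (so that the nested inductions are well-founded), and checking the parameter ranges of (\ref{codificados}) at the two boundaries $v=2\varphi+1$ and $v=3$, where certain head indices admit only the exceptional relation (\ref{DD}) rather than a clean or a twisted tail relation.
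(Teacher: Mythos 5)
Your proposal is correct and follows essentially the same route as the paper's proof: a descending induction handling $v\geq 2\varphi+2$ via the clean-tail relations (\ref{AA}), (\ref{BBaa}), (\ref{CCaa}), followed by a simultaneous descending induction on the pairs $(2\omega,2\omega+1)$ for $v\leq 2\varphi+1$, in which the two twisted relations with $j=2\omega$ and $j=2\omega-1$ are combined to force commutation with $h_{2\omega}$ and then $h_{2\omega+1}$ (your identity $x=(yx)(\overline{x}\,y\,x)^{-1}$ is precisely the paper's appeal to Lemma \ref{propiedadeselementales}), and the leftover gap-two relations $h_{2\omega-1}\,\&\,h_{2\omega+1}$ --- including $h_1\,\&\,h_3$ at the bottom --- are extracted from (\ref{DD}) exactly as in the paper. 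The only cosmetic difference is your citation of the computation (\ref{botondemuestra}), which in the paper runs in the converse direction (assuming (\ref{artingenesis})); the argument needed here instead uses only the already-established commutations, as you in fact indicate.
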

\begin{proof}
We prove the assertion by induction on $v$, first in the range $2\varphi+2\leq v\leq n-1$ and then, with a rather involved argument, in the range $3\leq v\leq 2\varphi+1$. The first inductive argument is a fairly direct generalization of the one in Example \ref{changen=p} (when $\varphi=0$): For $j=n-2$, relations (\ref{AA}), (\ref{BBaa}) and (\ref{CCaa}) show that $h_{n-1}$ commutes with
\begin{itemize}
\item $h(2,3)\cdots h(2\varphi,2\varphi+1)h_{2\varphi+2}\cdots h_i$, for $2\varphi+1\leq i\leq n-3$,
\item $h(2,3)\cdots h(2\varphi-2,2\varphi-1)h_{2\varphi}$,
\item $h(2,3)\cdots h(2k-2,2k-1)h_{2k}$, for $2\leq i=2k<2\varphi$, and
\item $h(2,3)\cdots h(2k,2k+1)$, for $1\leq i=2k+1<2\varphi$.
\end{itemize}
Reading these elements from the smallest to the largest values of $i$ (and using Lemma \ref{propiedadeselementales}), we get the assertion for $v=n-1$. This grounds the first inductive argument. The inductive step is formally identical: Take $v\in\{2\varphi+2,\ldots,n-2\}$ and assume that the assertion has been verified for all $v'$ with $v<v'\leq n-1$. Set $j:=v-1$ (so $j\geq2\varphi+1$). By induction (and Lemma \ref{propiedadeselementales}), relations (\ref{AA}), (\ref{BBaa}) and (\ref{CCaa}) imply that $h_{j+1}$ commutes with
\begin{itemize}
\item $h(2,3)\cdots h(2\varphi,2\varphi+1)h_{2\varphi+2}\cdots h_i$, for $2\varphi+1\leq i<j$ (this range is empty when $j=2\varphi+1$),
\item $h(2,3)\cdots h(2\varphi-2,2\varphi-1)h_{2\varphi}$,
\item $h(2,3)\cdots h(2k-2,2k-1)h_{2k}$, for $2\leq i=2k<2\varphi$,
\item $h(2,3)\cdots h(2k,2k+1)$, for $1\leq i=2k+1<2\varphi$.
\end{itemize}
As above, this closes the induction and proves the assertion for $v\in\{2\varphi+2,\ldots,n-1\}$. Next we prove the assertion for $v\in\{3,\ldots,2\varphi+1\}$ by inducting on $\lfloor v/2\rfloor$ ---rather than on~$v$. In other words, we consider the cases $v=2\omega$ and $v=2\omega+1$ ($1\leq\omega\leq \varphi$) simultaneously in the induction ---at the end of which (with $\omega=1$) we only consider the case $v=3$.

\smallskip
The induction starts with $v_\epsilon:=2\varphi+\epsilon$ for $\epsilon\in\{0,1\}$. Let $j_\epsilon:=v_\epsilon-1$. By Lemma \ref{propiedadeselementales} and the commutativity relations we have just proved, relations (\ref{BBbb}) for $j_1$ (i.e.~with $\ell=\varphi$) and (\ref{BBcc}) for $j_0$ (i.e.~with $\ell=\varphi-1$) imply that both $\overline{h_{2\varphi}}h(2\varphi,2\varphi+1)$ and $h(2\varphi,2\varphi+1)$ commute with $h(2,3)\cdots h(2k-2,2k-1)h_{2k}$ for $2\leq i=2k<2\varphi$. Equivalently (by Lemma~\ref{propiedadeselementales}),
$$
\mbox{$h_{2\varphi}$ and $h_{2\varphi+1}$ commute with $h(2,3)\cdots h(2k-2,2k-1)h_{2k}$ for $2\leq i=2k<2\varphi$.}
$$
Likewise, relations (\ref{CCbb}) for $j_1$ (i.e.~with $\ell=\varphi$) and (\ref{CCcc}) for $j_0$ (i.e.~with $\ell=\varphi-1$) give
$$
\mbox{$h_{2\varphi}$ and $h_{2\varphi+1}$ commute with
$h(2,3)\cdots h(2k,2k+1)$ for $1\leq i=2k+1<2\varphi-1$.}
$$
A new application of Lemma \ref{propiedadeselementales} then shows that
\begin{equation}\label{aux}
\mbox{$h_{2\varphi}$ and $h_{2\varphi+1}$ commute with $h_i$ for $1\leq i\leq2\varphi-2$.}
\end{equation}
Note that the last three assertions are vacuous when $\varphi=1$ and that, in order to complete the start of the induction, we still need to prove that $h_{2\varphi-1}\,\&\,h_{2\varphi+1}$ (which will also complete the whole induction when $\varphi=1$). With this in mind, take $i=2\varphi-1$ in (\ref{DD}) to get that $h(2,3)\cdots h(2\varphi-2,2\varphi-1)$ commutes with
\begin{align*}
h(2,3&)\cdots h(2\varphi,2\varphi+1)\cdot h_{2\varphi+2}\cdots h_{n-1}\cdot\overline{h(2,3)\cdots h(2\varphi-2,2\varphi-1)h_{2\varphi}}\\
=& \;h(2,3)\cdots h(2\varphi,2\varphi+1)\cdot h_{2\varphi+2}\cdots h_{n-1}\cdot\overline{h_{2\varphi}}\cdot\overline{h(2,3)\cdots h(2\varphi-2,2\varphi-1)}\\
=& \;h(2,3)\cdots h(2\varphi,2\varphi+1)\cdot\overline{h_{2\varphi}}
\cdot h_{2\varphi+2}\cdots h_{n-1}\cdot\overline{h(2,3)
\cdots h(2\varphi-2,2\varphi-1)} \\
=& \;h(2,3)\cdots h(2\varphi-2,2\varphi-1)\cdot h_{2\varphi+1}
\cdot h_{2\varphi+2}\cdots h_{n-1}\cdot
\overline{h(2,3)\cdots h(2\varphi-2,2\varphi-1)},
\end{align*}
where the second equality holds in view of what we proved in the first induction. Using Lemma~\ref{propiedadeselementales}, we then read the previous commutativity relation as the fact that 
$$
\mbox{$h(2,3)\cdots h(2\varphi-2,2\varphi-1)$ commutes with $h_{2\varphi+1}h_{2\varphi+2}\cdots h_{n-1}$ and thus with $h_{2\varphi+1}$,}
$$
which holds (once again) by the result of the first induction. Taking into account (\ref{aux}), we finally get the desired commutativity relation $h_{2\varphi-1}\,\&\,h_{2\varphi+1}$, thus grounding the second induction.

\smallskip
As with the first induction, settling the inductive step requires an argument entirely parallel to the one grounding the second induction. This time we only indicate where the two arguments have (minor) differences. 

\smallskip
The case $\varphi=1$ is covered by the start of the induction, so we can safely assume $\varphi\geq2$. Take $\omega\in\{1,2,\ldots,\varphi-1\}$ and assume that the assertion has been verified for $2\omega'$ and $2\omega'+1$ for all $\omega'\in\{\omega+1,\ldots,\varphi\}$. The focus then is on $v_\epsilon:=2\omega+\epsilon$ for $\epsilon\in\{0,1\}$. The exact same argument grounding the induction, but with $\varphi$ replaced by $\omega$, shows that $h_{2\omega}$ and $h_{2\omega+1}$ commute with $h_i$ for $1\leq i\leq2\omega-2$. Proving the additional relation $h_{2\omega-1}\,\&\,h_{2\omega+1}$ requires some tuning: Take $i=2\omega-1$ in (\ref{DD}) to get that $h(2,3)\cdots h(2\omega-2,2\omega-1)$ commutes with
\begin{align*}
h(2,3&)\cdots h(2\varphi,2\varphi+1)\cdot h_{2\varphi+2}\cdots h_{n-1}\cdot\overline{h(2,3)\cdots h(2\omega-2,2\omega-1)h_{2\omega}}\\
=& \;h(2,3)\cdots h(2\varphi,2\varphi+1)\cdot h_{2\varphi+2}\cdots h_{n-1}\cdot\overline{h_{2\omega}}\cdot\overline{h(2,3)\cdots h(2\omega-2,2\omega-1)}\\
=& \;h(2,3)\cdots h(2\omega,2\omega+1) \cdot \overline{h_{2\omega}} \cdot h(2\omega+2,2\omega+3)\cdots  h(2\varphi,2\varphi+1)\\
&\hspace{3mm}\cdot h_{2\varphi+2} \cdots h_{n-1} \cdot\overline{h(2,3)\cdots h(2\omega-2,2\omega-1)} \\
=& \;h(2,3)\cdots h(2\omega-2,2\omega-1)\cdot h_{2\omega+1}
\cdot h(2\omega+2,2\omega+3)\cdots h(2\varphi,2\varphi+1) \\
 &\hspace{3mm}\cdot h_{2\varphi+2}\cdots h_{n-1}\cdot
\overline{h(2,3)\cdots h(2\omega-2,2\omega-1)}.
\end{align*}
The exact same justifications as before allow us to deduce that $h(2,3)\cdots h(2\omega-2,2\omega-1)$ commutes with $h_{2\omega+1}\cdot h(2\omega+2,2\omega+3) \cdots h(2\varphi,2\varphi+1) \cdot\cdot h_{2\varphi+2}\cdots h_{n-1}$ and, then, with $h_{2\omega+1}$, which finally implies $h_{2\omega-1}\,\&\,h_{2\omega+1}$.
\end{proof}

We now prepare the grounds for the proof of Proposition \ref{encompassing}. 
\begin{notation}\label{palabras}\hspace{0em}{\em
\begin{enumerate}
\item Fix an integer $m$. Any element of $H_n$ that can be written as a word on generators $h_i$ with $i\leq m$ (respectively, $i\geq m$) and their inverses will generically be denoted as $w(m)$ (respectively, $W(m)$). Note that, for $m\leq0$, $w(m)$ can only denote the neutral element~$1$, while $W(m)$ would stand for any element of $H_n$. Likewise, for $m\geq n$, $W(m)=1$ while $w(m)$ would stand for any element of $H_n$. In some of the calculations below we need to deal with two potentially different generic elements of the same type and with the same parameter $m$. In such a case we will use $w'(x)$ or $W'(x)$, depending on the case, for the second generic element.

\item For integers $a_1,a_2,e_1,e_2,o_1,o_2\in\{1,2,\ldots,n-1\}$ with $e_1$, $e_2$ even and $o_1$, $o_2$ odd, set
\begin{align*}
A(a_1,a_2):=h_{a_1}h_{a_1+1}h_{a_1+2}\cdots h_{a_2},\\ E(e_1,e_2):=h_{e_1}h_{e_1+2}h_{e_1+4}\cdots h_{e_2},\\ O(o_1,o_2):=h_{o_1}h_{o_1+2}h_{o_1+4}\cdots h_{o_2}.
\end{align*}
Note that the order of factors in $E(e_1,e_2)$ or in $O(o_1,o_2)$ is immaterial, and that $E(e_1,e_2)=1$ (respectively, $O(o_1,o_2)=1$, $A(a_1,a_2)$=1) when $e_1>e_2$ (respectively $o_1>o_2$, $a_1>a_2$). In order to avoid awkward restrictions, it is convenient to extend the notation to unrestricted parameters. Thus, for arbitrary integers $a_1,a_2,e_1,e_2,o_1,o_2$ we set
\begin{align*}
&A(a_1,a_2):=A(\max(1,a_1),\min(n-1,a_2)), \\
&E(e_1,e_2):=E(\max(2,e_1),\min(2\lfloor(n-1)/2\rfloor,e_2)), \mbox{when both $e_1$ and $e_2$ are even,}\\
&O(o_1,o_2):=O(\max(1,o_1),\min(2\lceil(n-1)/2\rceil-1,o_2))), \mbox{when both $o_1$ and $o_2$ are odd.} 
\end{align*}
\end{enumerate}}\end{notation}

In terms of Notation \ref{palabras} (and in view of Proposition \ref{cambiodebasefinal}), the isomorphism $\phi\colon G_n\to H_n$ takes the more compact form 
$$
\phi(g_i)=\begin{cases}
h_1, & i=1;\\
O(3,2k-1)E(2,2k), & 2\leq i=2k\leq2\varphi;\\
O(3,2k+1)E(2,2k), & 3\leq i=2k+1\leq2\varphi+1;\\
O(3,2\varphi+1)E(2,2\varphi)A(2\varphi+2,i), & 2\varphi+2\leq i\leq n-1,
\end{cases}
$$ 
where the last instance can also be written as $O(3,2\varphi+1)A(2\varphi+2,i)E(2,2\varphi)$. Likewise, the following form of (\ref{furthernotice}) will be used without further notice in the rest of the section:
$$
\phi(\overline{g_i}g_{n-1})=\begin{cases}
A(i+1,n-1), & \mbox{for $i\geq\max(2,2\varphi+1)$;} \\
O(2\ell+3,2\varphi+1)E(2\ell+2,2\varphi)A(2\varphi+2,n-1), & \mbox{for $2\leq i=2\ell+1\leq2\varphi$;}\\
\overline{h_{2\ell}}\cdot O(2\ell+1,2\varphi+1)E(2\ell,2\varphi)A(2\varphi+2,n-1), & \mbox{for $2\leq i=2\ell\leq2\varphi$.}
\end{cases}
$$

The generic commutation relation $w(i)\,\&\,W(i+2)$, together with the generic expressions in Lemmas \ref{formulaschicas} and \ref{formulasgrandes} below, lay at the heart of our proof of Proposition \ref{encompassing} (given at the end of the section). 

\begin{lemma}\label{formulaschicas}
The following $w$-type generic expressions hold for non-negative integers $x$, $y$ and $z$ with $1\leq x\leq n-1\colon$
\begin{enumerate}
\item\label{gen0} $\phi(g_x)=w(x)$.

\item\label{gen1} $w(x)^{\phi(g_{n-1})}=\begin{cases}
w(x+1), & \mbox{if $x>2\varphi$, or if $x$ is even,} \\
w(x+2), & \mbox{otherwise.} 
\end{cases}$

\item\label{gen2} $\phi(g_x^{g_{n-1}^y})=w(x+y)$, if $x\geq2\varphi$.

\item\label{gen3} $\phi(g_{2\varphi-z}^{g_{n-1}^y})=w(2\varphi+y-\lfloor z/2\rfloor)$, if $1\leq z<2\varphi$.
\end{enumerate}
\end{lemma}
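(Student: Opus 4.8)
The plan is to establish the four items in order, each from its predecessors. Item (\ref{gen0}) is immediate from the compact formula for $\phi(g_x)$ displayed just above the lemma: in each of its four cases the largest index among the generators $h_i$ occurring equals exactly $x$ (coming from $E(2,2k)$, from $O(3,2k+1)$, or from $A(2\varphi+2,i)$ respectively, and from $h_1$ when $x=1$), so $\phi(g_x)\in\langle h_1,\dots,h_x\rangle$, that is, $\phi(g_x)=w(x)$.

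The heart of the argument is item (\ref{gen1}), which I would prove one generator at a time. Set $c:=\phi(g_{n-1})$; since $p\le n\le2p-5$ forces $n-1\ge2\varphi+2$, the last case of the formula applies and $c=O(3,2\varphi+1)\,E(2,2\varphi)\,A(2\varphi+2,n-1)$, a word in which each of $h_2,\dots,h_{n-1}$ occurs exactly once. As conjugation by $c$ is an endomorphism of $H_n$ and a generic $w(x)$ lies in $\langle h_1,\dots,h_x\rangle$, it suffices to determine, for each $j\le x$, the largest index $\sigma(j)$ occurring in $c\,h_j\,\overline{c}$ and then to take the maximum over $j\le x$. Working solely with the commutations $h_a\,\&\,h_b$ for $|a-b|\ge2$, I would delete from $c$ and from $\overline{c}$ all factors commuting with $h_j$, reducing $c\,h_j\,\overline{c}$ to a conjugate of $h_j$ by its few surviving neighbours $h_{j-1},h_{j+1}$ (together with adjacent factors of $A(2\varphi+2,n-1)$ when $j\ge2\varphi+2$). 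A short case analysis then gives $\sigma(j)=j+1$ for $j$ even with $j\le2\varphi$, $\sigma(j)=j+2$ for $j$ odd with $j\le2\varphi-1$, and $\sigma(j)=j+1$ for $j>2\varphi$, the boundary indices $j=2\varphi,2\varphi+1$ fitting this pattern. Taking the maximum over $j\le x$ produces $x+1$ when $x$ is even or $x>2\varphi$, and $x+2$ when $x\le2\varphi$ is odd, which is exactly (\ref{gen1}).

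Items (\ref{gen2}) and (\ref{gen3}) follow by applying (\ref{gen1}) repeatedly, starting from $\phi(g_x)=w(x)$ and using $\phi(g_x^{g_{n-1}^y})=w(x)^{c^y}$. For (\ref{gen2}) the initial bound $x$ is $\ge2\varphi$; the only value $\le2\varphi$ ever attained is $2\varphi$ itself, which is even and so still contributes $+1$, while every value $>2\varphi$ also contributes $+1$, so the bound increases by $1$ at each of the $y$ steps, giving $w(x+y)$. For (\ref{gen3}) I would track the deficit $d_y:=2\varphi-b_y$ of the bound $b_y$ reached after $y$ steps, with $d_0=z$; by (\ref{gen1}), as long as $b_y<2\varphi$ one has $d_{y+1}=d_y-2$ when $d_y$ is odd and $d_{y+1}=d_y-1$ when $d_y$ is even. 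A direct check shows that for either parity of $z$ the deficit reaches $1$ exactly at $y=\lfloor z/2\rfloor$, so that $b_{\lfloor z/2\rfloor}=2\varphi-1$ and $b_{\lfloor z/2\rfloor+1}=2\varphi+1$, after which the bound grows by $1$ per step; hence $b_y=2\varphi+y-\lfloor z/2\rfloor$ for $y\ge\lfloor z/2\rfloor+1$, while the same quantity is an upper bound for the earlier values of $y$. Since any $w(b_y)$ is also a $w$ of a larger index, this yields $\phi(g_{2\varphi-z}^{g_{n-1}^y})=w(2\varphi+y-\lfloor z/2\rfloor)$.

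The main obstacle is item (\ref{gen1}): the conjugate $c\,h_j\,\overline{c}$ must be controlled entirely through the right-angled commutations, and the delicate part is the case analysis of $\sigma(j)$ around the transition index $2\varphi$, where $c$ passes from the interleaved odd and even blocks $O(3,2\varphi+1)$, $E(2,2\varphi)$ to the consecutive block $A(2\varphi+2,n-1)$. The parity bookkeeping behind the deficit recurrence in (\ref{gen3}) is a secondary technical point, routine once the recurrence is in place.
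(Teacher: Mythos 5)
Your proposal is correct and follows essentially the same route as the paper: item 2 is proved by using the right-angled commutations $h_a\,\&\,h_b$ ($|a-b|>1$) to bound how conjugation by $\phi(g_{n-1})$ enlarges the support of a word in $h_1,\ldots,h_x$, with the same parity/position-relative-to-$2\varphi$ case analysis, and items 3--4 follow by iterating item 2 (which the paper leaves to the reader, while you make the deficit bookkeeping explicit). The only organizational difference is that you bound the support of each conjugate $h_j^{\phi(g_{n-1})}$ separately and take the maximum over $j\leq x$, whereas the paper peels commuting blocks off the conjugating word acting on all of $w(x)$ at once; both rest on identical commutation manipulations and yield the same bounds.
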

\begin{proof}
Item \ref{gen0} is obvious. For item \ref{gen1} with $x>2\varphi$:
\begin{align*}
w(x)^{\phi(g_{n-1})}&=w(x)^{O(3,2\varphi+1)E(2,2\varphi)A(2\varphi+2,n-1)}
=w(x)^{O(3,2\varphi+1)E(2,2\varphi)A(2\varphi+2,x+1)}\\
&=w(x+1).
\end{align*}
For item \ref{gen1} with $x=2\ell$, $1\leq\ell\leq\varphi$:
\begin{align*}
w(2\ell)^{\phi(g_{n-1})}&=w(2\ell)^{O(3,2\varphi+1)E(2,2\varphi)A(2\varphi+2,n-1)}=w(2\ell)^{O(3,2\varphi+1)E(2,2\ell)}\\
&=w(2\ell)^{O(3,2\ell+1)E(2,2\ell)O(2\ell+3,2\varphi+1)}
=w(2\ell)^{O(3,2\ell+1)E(2,2\ell)}\\&=w(2\ell+1).
\end{align*}
For item \ref{gen1} with $x=2\ell+1$, $0\leq\ell<\varphi$:
\begin{align*}
w(2\ell+1)^{\phi(g_{n-1})}&=w(2\ell+1)^{O(3,2\varphi+1)E(2,2\varphi)A(2\varphi+2,n-1)}=w(2\ell+1)^{O(3,2\varphi+1)E(2,2\ell+2)}\\
&=w(2\ell+1)^{O(3,2\ell+3)E(2,2\ell+2)O(2\ell+5,2\varphi+1)}
=w(2\ell+1)^{O(3,2\ell+3)E(2,2\ell+2)}\\&=w(2\ell+3).
\end{align*}
We leave for the reader the easy verification that items \ref{gen2} and \ref{gen3} follow from items \ref{gen0} and \ref{gen1}. (The generic estimate in item \ref{gen3} is not sharp when $y\leq\lfloor z/2\rfloor+1$, but this form is sufficient for our purposes.)
\end{proof}

\begin{lemma}\label{formulasgrandes}
The following $W$-type generic expressions hold for $1\leq x\leq n-1\colon$
\begin{enumerate}
\item\label{ric0} $\phi(\overline{g_x}g_{n-1})=\begin{cases}
W(x+1), & \mbox{if $x\geq\max(2,2\varphi+1)$, or if $x$ is odd with $x\geq3$,}\\
W(x), & \mbox{if $x\leq2\varphi$ and $x$ is even, or if $x=1$.}
\end{cases}$

\item\label{ric1} $W(x)^{\phi(g_{n-1})}=
\begin{cases}
W(2\varphi-1), &\mbox{if $x\geq2\varphi$,}\\
W(2\lfloor x/2\rfloor-1), &\mbox{if $x<2\varphi$.}
\end{cases}$

\item\label{ric2} $\phi((\overline{g_x}g_{n-1})^{g_{n-1}})=
\begin{cases}
W(2\varphi+1), & \mbox{if $x\geq2\varphi$,}\\
W(x), & \mbox{if $x<2\varphi$ and $x$ is odd,}\\
W(x+1), & \mbox{if $x<2\varphi$ and $x$ is even.}
\end{cases}$

\item\label{ric3} $\phi((\overline{g_x}g_{n-1})^{g_{n-1}^y})=W(2\varphi-2y+3)$, if $x\geq2\varphi$ and $y>0$. 
\end{enumerate}
\end{lemma}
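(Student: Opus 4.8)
The plan is to read all four items as statements about the support of elements of the right-angled Artin group $H_n$, whose only relations are $h_i\,\&\,h_j$ for $|i-j|>1$. The single mechanism I will use is commute-and-drop: a factor of a conjugating word may be deleted as soon as every generator occurring in it sits at distance $\geq2$ from the support of the element being conjugated --- exactly the device already used in the proof of Lemma~\ref{formulaschicas}, now arranged so as to push support downward rather than up. Item~\ref{ric0} requires no conjugation at all; it is obtained by locating the least index occurring in the expression for $\phi(\overline{g_i}g_{n-1})$ displayed just before the lemma. For $x\geq\max(2,2\varphi+1)$ that expression is $A(x+1,n-1)$, with least index $x+1$; for odd $x=2\ell+1$ with $3\leq x\leq 2\varphi-1$ the least index occurring is that of the block $E(2\ell+2,2\varphi)$, namely $x+1$; for even $x=2\ell\leq 2\varphi$ the factor $\overline{h_{2\ell}}$ contributes the least index $x$; and the single remaining value $x=1$ is settled directly from $\phi(\overline{g_1}g_{n-1})=\overline{h_1}\,\phi(g_{n-1})$, whose least index is $1$. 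These reproduce the two cases $W(x+1)$ and $W(x)$.

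For item~\ref{ric1} I would use that conjugation by $\phi(g_{n-1})$ is an automorphism of $H_n$, so that a generic $W(x)$, being a word in the $h_i$ with $i\geq x$, is sent to the same word in the conjugates $h_i^{\phi(g_{n-1})}$; hence it suffices to bound below the support of each $h_i^{\phi(g_{n-1})}$ and take the minimum over $i\geq x$. Writing $\phi(g_{n-1})=O(3,2\varphi+1)E(2,2\varphi)A(2\varphi+2,n-1)$, the only factors that fail to commute with $h_i$ are those of index in $\{i-1,i,i+1\}$. For $i<2\varphi$ these are the two neighbouring odd generators of $O$ (when $i$ is even) or the two neighbouring even generators of $E$ (when $i$ is odd), and deleting everything else localises $h_i^{\phi(g_{n-1})}$ to a word supported on indices $\geq 2\lfloor i/2\rfloor-1$; for $i\geq 2\varphi$ the same reduction (the sharp case being $i=2\varphi$, where the surviving factors are $h_{2\varphi-1},h_{2\varphi+1}$) gives support $\geq 2\varphi-1$. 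Minimising over $i\geq x$ then yields $W(2\lfloor x/2\rfloor-1)$ for $x<2\varphi$ and $W(2\varphi-1)$ for $x\geq 2\varphi$.

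For item~\ref{ric2} the decisive simplification is the identity $(\overline{g_x}g_{n-1})^{g_{n-1}}=g_{n-1}\overline{g_x}$, immediate from $z^{g_{n-1}}=g_{n-1}z\overline{g_{n-1}}$ and $g_{n-1}\overline{g_{n-1}}=1$. I emphasise that merely composing items~\ref{ric0} and~\ref{ric1} yields only $W(2\varphi-1)$, which is too weak here; the sharper bound forces one to exploit the precise shape of $\phi(g_{n-1})\overline{\phi(g_x)}$. Substituting the explicit formulas for $\phi(g_{n-1})$ and $\phi(g_x)$ and telescoping the matching $O$-, $E$- and $A$-blocks, each range collapses cleanly: for $x=2\varphi$ one gets $A(2\varphi+1,n-1)$; for $x\geq 2\varphi+2$ a word supported on indices $\geq 2\varphi+1$; for even $x=2\ell<2\varphi$ the word $O(2\ell+1,2\varphi+1)E(2\ell+2,2\varphi)A(2\varphi+2,n-1)$, with least index $x+1$; and for odd $x=2\ell+1<2\varphi$ the residual conjugation by $O(3,2\ell+1)$ reduces to conjugation by $h_{2\ell+1}$, giving least index $x$. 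These are precisely $W(2\varphi+1)$, $W(x+1)$ and $W(x)$.

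Finally, item~\ref{ric3} follows by iteration. From $(\overline{g_x}g_{n-1})^{g_{n-1}^y}=(g_{n-1}\overline{g_x})^{g_{n-1}^{y-1}}$ and item~\ref{ric2} (which gives $\phi(g_{n-1}\overline{g_x})=W(2\varphi+1)$ when $x\geq 2\varphi$), I would apply item~\ref{ric1} a total of $y-1$ times. The first application sends $W(2\varphi+1)$ to $W(2\varphi-1)$, and every subsequent one lands in the ``$x<2\varphi$'' branch, sending $W(2\varphi+1-2k)$ to $W(2\varphi-1-2k)$; after $y-1$ steps the bound is $W(2\varphi+1-2(y-1))=W(2\varphi-2y+3)$. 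The main difficulty I anticipate is purely one of bookkeeping rather than of ideas: carrying out the telescoping in item~\ref{ric2} --- especially the $x=2\varphi$ and odd-$x$ sub-cases, where the $\overline{h_{2\varphi}}$-factor and the residual $O(3,2\ell+1)$-conjugation must be tracked carefully --- and checking, for every parity of $i$ in item~\ref{ric1}, that the commute-and-drop reductions really isolate the asserted least index.
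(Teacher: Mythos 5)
Your proposal is correct in substance and rests on the same engine as the paper's proof: commute-and-drop manipulations in $H_n$ using the explicit $O$-, $E$-, $A$-block formulas for $\phi(g_i)$ and $\phi(\overline{g_i}g_{n-1})$, with item \ref{ric0} read off from those formulas and item \ref{ric3} obtained by applying item \ref{ric2} once and then item \ref{ric1} repeatedly, which is precisely how the paper concludes. Where you genuinely depart from the paper is item \ref{ric2}: the paper conjugates the item-\ref{ric0} expressions by $\phi(g_{n-1})$ in several long displayed computations, whereas your identity $(\overline{g_x}g_{n-1})^{g_{n-1}}=g_{n-1}\overline{g_x}$ replaces every conjugation by a plain product $\phi(g_{n-1})\,\overline{\phi(g_x)}$ whose blocks telescope; this is a real simplification, and your remark that composing items \ref{ric0} and \ref{ric1} only yields the too-weak bound $W(2\varphi-1)$ correctly identifies why a separate computation is needed there at all. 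Your generator-wise treatment of item \ref{ric1} (conjugate each $h_i$ with $i\geq x$ and take the minimum of the resulting support bounds) is likewise a valid reorganization of the paper's word-level computation, since conjugation is an automorphism and supports of products lie in unions of supports.

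Two loose ends should be repaired before this counts as a complete proof. First, your case list for item \ref{ric2} skips $x=2\varphi+1$: you treat $x=2\varphi$ and $x\geq2\varphi+2$, but $2\varphi+1\leq n-2$ always holds in this setting, so the case is nonvacuous; the same telescoping handles it, giving $h_{2\varphi+1}A(2\varphi+2,n-1)\overline{h_{2\varphi+1}}=W(2\varphi+1)$. Second, in item \ref{ric1} for odd $i<2\varphi$ your justification is too quick: it is not true that one may delete every factor of $\phi(g_{n-1})$ outside indices $\{i-1,i,i+1\}$, because after conjugating $h_i$ by its two even $E$-neighbours the support grows to $\{i-1,i,i+1\}$, and then the odd $O$-factors $h_{i-2},h_i,h_{i+2}$ can no longer be dropped. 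The correct final bound is $2\lfloor i/2\rfloor-1=i-2$, which is exactly the figure you assert, but it arises from this second wave of surviving factors rather than from the deletion you describe; as stated, your mechanism would give support $\geq i-1$, and the discrepancy should be made explicit so the inductive bookkeeping in the minimization over $i\geq x$ is airtight.
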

\begin{proof}
Item \ref{ric0} is obvious. Item \ref{ric1} with $x\geq2\varphi$ is obvious for $\varphi\leq1$, whereas for $\varphi>1$:
\begin{align*}
W(x)^{\phi(g_{n-1})}&=W(x)^{O(3,2\varphi+1)A(2\varphi+2,n-1)E(2,2\varphi)}\\&
=W(x)^{O(3,2\varphi+1)A(2\varphi+2,n-1)h_{2\varphi}}\\
&=W(x)^{O(2\varphi-1,2\varphi+1)A(2\varphi+2,n-1)h_{2\varphi}O(3,2\varphi-3)}\\&=W(x)^{O(2\varphi-1,2\varphi+1) A(2\varphi+2,n-1)h_{2\varphi}}\\&=W(2\varphi-1).
\end{align*}
Item \ref{ric1} with $x=2\ell+\epsilon<2\varphi$ ($\epsilon\in\{0,1\}$) is elementary for $\ell\leq1$, while for $\ell\geq2$ (so $\varphi>2$):
\begin{align*}
W(2\ell+\epsilon)^{\phi(g_{n-1})}
&=W(2\ell+\epsilon)^{O(3,2\varphi+1)A(2\varphi+2,n-1)E(2,2\varphi)}\\&=W(2\ell+\epsilon)^{O(3,2\varphi+1)A(2\varphi+2,n-1)E(2\ell,2\varphi)}\\
&=W'(2\ell)^{O(3,2\varphi+1)}=W'(2\ell)^{O(2\ell-1,2\varphi+1)}\\&=W(2\ell-1).
\end{align*}
Item \ref{ric2} with $x>2\varphi$ is elementary for $\varphi=0$, while for $\varphi\geq1$ (so $x>2$):
\begin{align*}
\phi((\overline{g_x}g_{n-1})^{g_{n-1}})&=A(x+1,n-1)^{O(3,2\varphi+1) A(2\varphi+2,n-1)E(2,2\varphi)} \\& =A(x+1,n-1)^{O(3,2\varphi+1)A(2\varphi+2,n-1)}\\& =A(x+1,n-1)^{h_{2\varphi+1}A(2\varphi+2,n-1)O(3,2\varphi-1)}\\& =A(x+1,n-1)^{h_{2\varphi+1}A(2\varphi+2,n-1)}\\& =W(2\varphi+1).
\end{align*}
For item \ref{ric2} with $x=2\varphi$ (so $\varphi\geq1$ and $x\geq2$):
\begin{align*}
\phi((\overline{g_{2\varphi}}g_{n-1}&)^{g_{n-1}})=(\overline{h_{2\varphi}}\cdot h(2\varphi,2\varphi+1)A(2\varphi+2,n-1))^{O(3,2\varphi+1)A(2\varphi+2,n-1)E(2,2\varphi)}\\
&=(h(2\varphi,2\varphi+1)A(2\varphi+2,n-1)\cdot \overline{h_{2\varphi}})^{O(3,2\varphi+1)A(2\varphi+2,n-1)E(2,2\varphi-2)}\\
&=(h(2\varphi,2\varphi+1)\cdot\overline{h_{2\varphi}}\cdot A(2\varphi+2,n-1))^{O(3,2\varphi+1)A(2\varphi+2,n-1)E(2,2\varphi-2)}\\
&=A(2\varphi+1,n-1)^{O(3,2\varphi+1)A(2\varphi+2,n-1)E(2,2\varphi-2)}\\
&=A(2\varphi+1,n-1)^{O(3,2\varphi+1)A(2\varphi+2,n-1)}\\& =A(2\varphi+1,n-1)^{h_{2\varphi+1}A(2\varphi+2,n-1)O(3,2\varphi-1)}\\
&=A(2\varphi+1,n-1)^{h_{2\varphi+1}A(2\varphi+2,n-1)}=W(2\varphi+1).
\end{align*}
Item \ref{ric2} for $x=2\ell+1<2\varphi$ with $\ell=0$ is elementary, while for $\ell\geq1$ (so $x>2$):
\begin{align*}
\phi((&\overline{g_{2\ell+1}}g_{n-1})^{g_{n-1}})\\&
=(O(2\ell+3,2\varphi+1)E(2\ell+2,2\varphi)A(2\varphi+2,n-1)
)^{O(3,2\varphi+1)E(2,2\varphi)A(2\varphi+2,n-1)}\\
&=(A(2\varphi+2,n-1)O(2\ell+3,2\varphi+1)E(2\ell+2,2\varphi)
)^{O(3,2\varphi+1)E(2,2\varphi)}\\
&=(A(2\varphi+2,n-1)O(2\ell+3,2\varphi+1)E(2\ell+2,2\varphi)
)^{O(3,2\varphi+1)E(2\ell+2,2\varphi)}\\
&=(A(2\varphi+2,n-1)O(2\ell+3,2\varphi+1)E(2\ell+2,2\varphi)
)^{O(2\ell+1,2\varphi+1)E(2\ell+2,2\varphi)O(3,2\ell-1)}\\
&=(A(2\varphi+2,n-1)O(2\ell+3,2\varphi+1)E(2\ell+2,2\varphi)
)^{O(2\ell+1,2\varphi+1)E(2\ell+2,2\varphi)}=W(2\ell+1).
\end{align*}
For item \ref{ric2} with $x=2\ell<2\varphi$ (so $\varphi>\ell\geq1$ and $x\geq2$):
\begin{align*}
\phi(&(\overline{g_{2\ell}}g_{n-1})^{g_{n-1}})\\&
=(\overline{h_{2\ell}}\cdot O(2\ell+1,2\varphi+1)E(2\ell,2\varphi)A(2\varphi+2,n-1))^{O(3,2\varphi+1)E(2,2\varphi)A(2\varphi+2,n-1)}\\
&=(A(2\varphi+2,n-1)\cdot\overline{h_{2\ell}}\cdot O(2\ell+1,2\varphi+1)E(2\ell,2\varphi))^{O(3,2\varphi+1)E(2,2\varphi)}\\
&=(A(2\varphi+2,n-1)\cdot\overline{h_{2\ell}}\cdot O(2\ell+1,2\varphi+1)E(2\ell,2\varphi))^{O(3,2\varphi+1)E(2\ell,2\varphi)}\\
&=(\overline{h_{2\ell}}\cdot A(2\varphi+2,n-1) O(2\ell+1,2\varphi+1)E(2\ell,2\varphi))^{O(3,2\varphi+1)E(2\ell,2\varphi)}\\
&=(A(2\varphi+2,n-1)O(2\ell+1,2\varphi+1)E(2\ell+2,2\varphi))^{O(3,2\varphi+1)E(2\ell+2,2\varphi)}\\
&=(A(2\varphi+2,n-1)O(2\ell+1,2\varphi+1)E(2\ell+2,2\varphi))^{O(2\ell+1,2\varphi+1)E(2\ell+2,2\varphi)}=W(2\ell+1).
\end{align*}

We leave it for the reader the easy verification that item \ref{ric3} follows from items \ref{ric2} and \ref{ric1} ---in that order. 
\end{proof}

\begin{proof}[Proof of Proposition \ref{encompassing}]
Since $\pi\colon G_n\to B_n(p\times2)$ is a canonical projection, it suffices to show that the commutator coming from the two elements in each instance of (\ref{normalizadas}) ---subject to the conditions in (\ref{unifiedrelconds})--- vanishes in $G_n$. This will be achieved by showing that the image under the isomorphism $\phi\colon G_n\to H_n$ in Proposition~\ref{cambiodebasefinal} of each such commutator vanishes as, up to conjugates, it is of the form $[w(i),W(j)]$ with $i+2\leq j$. Explicitly, the commutator associated to (\ref{normalizadas}) is 
\begin{equation}\label{conmutador1}
\left[g_t\,,\,\overline{g_{s+t+u+1}}^{\hspace{.4mm}g_{n-1}^{m(t)-m(s+t+u+1)-s}}\cdot g_{n-1}\right],
\end{equation}
and its $g_{n-1}^{m(s+t+u+1)+s-m(t)}$-conjugate is
\begin{equation}\label{conmutador2}
\left[g_t^{\hspace{.4mm}g_{n-1}^{m(s+t+u+1)+s-m(t)}},\,\overline{g_{s+t+u+1}}\cdot g_{n-1}\right],
\end{equation}
where we recall that $m(j)=\max(0,\varphi-\lfloor j/2\rfloor)$. We then consider the image of (\ref{conmutador1}) under $\phi$ when $m(t)-m(s+t+u+1)-s\geq0$, and the $\phi$-image of (\ref{conmutador2}) when $m(s+t+u+1)+s-m(t)\geq0$. In either case, we simply use $A=A(r,s,t,u,v)$ and $B:=B(r,s,t,u,v)$, respectively, to refer to the first and second factors of the indicated commutator. The goal then is to show that, for each 5-tuple $(r,s,t,u,v)$ satisfying (\ref{unifiedrelconds}), there is a pair of integers $i$ and $j$ with $i+2\leq j$ and such that $\phi(A)=w(i)$ and $\phi(B)=W(j)$.

\smallskip
The simplest situation holds with $t\geq2\varphi$, as then $m(s+t+u+1)=m(t)=0$, so that
$\phi(A)=\phi(g_t^{g_{n-1}^s})=w(s+t=:i)$ while $\phi(B)=\phi(\overline{g_{s+t+u+1}}\cdot g_{n-1})=W(s+t+u+2=:j)$,
with $i+2\leq j$ obviously holding. Note that item 1 in Lemma \ref{formulasgrandes} applies in the second equality for $\phi(B)$ since $t>0$ in view of~(\ref{unifiedrelconds}). All other situations are similar and will be treated below on a case-by-case basis under the common hypothesis
$$
\mbox{$1\leq t=2\varphi-2\ell+\epsilon<2\varphi$, with $\epsilon\in\{0,1\}$ and $1\leq\ell\leq\varphi$, so $m(t)=\ell$.}
$$
Note that $\epsilon=1$ is forced when $\ell=\varphi$.

\medskip
\noindent{\bf Case $s\geq\ell$ with $s+u+\epsilon\geq2\ell$}. We have $s+t+u+1=2\varphi+s-2\ell+\epsilon+u+1\geq\max(2,2\varphi+1)$, so $m(s+t+u+1)=0$ and
$$
\phi(A)=\phi(g_{2\varphi-2\ell+\epsilon}^{g_{n-1}^{s-\ell}})=w(2\varphi+s-2\ell+\epsilon=:i),
$$
in view of Lemma \ref{formulaschicas}(\ref{gen3}), whereas Lemma \ref{formulasgrandes}(\ref{ric0}) gives
$$
\phi(B)=\phi(\overline{g_{2\varphi+s-2\ell+\epsilon+u+1}}\cdot g_{n-1})=W(2\varphi+s-2\ell+\epsilon+u+2=:j).
$$
The required condition $i+2\leq j$ is elementary.

\medskip\noindent{\bf Case $s\geq\ell$ with $s+u+\epsilon<2\ell$}. Say $s+u+\epsilon=2k-\delta$, with $\delta\in\{0,1\}$, so $\ell\leq2k-\delta<2\ell$, $1\leq k\leq\ell$, $s+t+u+1=2\varphi-2\ell+2k-\delta+1\leq2\varphi$ and $m(s+t+u+1)=\ell-k$. Then $m(s+t+u+1)+s-m(t)=s-k\geq s-\ell\geq0$, so that
$$
\phi(A)=\phi(g_{2\varphi-2\ell+\epsilon}^{g_{n-1}^{s-k}})=\begin{cases}
w(2\varphi-2\ell+\epsilon=:i), & \!\!\!\!\mbox{if $s=k$, by Lemma \ref{formulaschicas}(\ref{gen0});} \\
w(2\varphi-2\ell+2s-2k+2\epsilon-1=:i), & \!\!\!\!\mbox{if $s>k$, by Lemma \ref{formulaschicas}(\ref{gen0},\ref{gen1}),}
\end{cases}
$$
whereas Lemma \ref{formulasgrandes}(\ref{ric0}) gives
$$
\phi(B)=\phi(\overline{g_{2\varphi-2\ell+2k+1-\delta}}\cdot g_{n-1})=W(2\varphi-2\ell+2k+2-2\delta=:j).
$$
The required condition $i+2\leq j$ comes from the assumption $s+u+\epsilon=2k-\delta$.

\medskip\noindent{\bf Case $s<\ell$ with $s+u+\epsilon+1\geq2\ell$}. We have $s+t+u+1=2\varphi-2\ell+s+u+\epsilon+1\geq2\varphi$, so $m(s+t+u+1)=0$ and
$$
\phi(A)=\phi(g_t)=w(2\varphi-2\ell+\epsilon=:i),
$$
in view of Lemma \ref{formulaschicas}(\ref{gen0}), whereas Lemma \ref{formulasgrandes}(\ref{ric3}) gives
$$
\phi(B)=\phi(\overline{g_{s+t+u+1}}^{g_{n-1}^{\ell-s}}g_{n-1})=W(2\varphi-2\ell+2s+3=:j).
$$
The required condition $i+2\leq j$ is elementary.

\medskip In the only remaining case we have $s<\ell$ with $s+u+\epsilon+1<2\ell$. Say
\begin{equation}\label{ultima}
s+u+\epsilon+1=2k+\delta,
\end{equation}
with $\delta\in\{0,1\}$, so $s+t+u+1=2\varphi-2\ell+2k+\delta<2\varphi$, $m(s+t+u+1)=\ell-k>0$ and $m(t)-m(s+t+u+1)-s=k-s$. We then consider two subcases:

\medskip\noindent{\bf Subcase $k>s$}. Note that Lemma \ref{formulaschicas}(\ref{gen0}) gives $\phi(A)=\phi(g_t)=w(2\varphi-2\ell+\epsilon=:i)$, while Lemma~\ref{formulasgrandes}(\ref{ric1},\ref{ric2}) yields
$$
\phi(B)=\phi(\overline{g_{2\varphi-2\ell+2k+\delta}}^{g_{n-1}^{k-s}}g_{n-1})=W(2\varphi-2\ell+2s+3=:j).
$$
The required condition $i+2\leq j$ is elementary.

\medskip\noindent{\bf Subcase $k\leq s$}. We have
$$
\phi(A)=\phi(g_{2\varphi-2\ell+\epsilon}^{g_{n-1}^{s-k}})=\begin{cases}
w(2\varphi-2\ell+\epsilon=:i), & \!\!\!\!\mbox{if $s=k$, by Lemma \ref{formulaschicas}(\ref{gen0});} \\
w(2\varphi-2\ell+2s-2k+2\epsilon-1=:i), & \!\!\!\!\mbox{if $s>k$, by Lemma \ref{formulaschicas}(\ref{gen0},\ref{gen1}),}
\end{cases}
$$
whereas $\phi(B)=\phi(\overline{g_{2\varphi-2\ell+2k+\delta}}\cdot g_{n-1})=W(2\varphi-2\ell+2k+2\delta=:j)$, in view of Lemma~\ref{formulasgrandes}(\ref{ric0}) since $2\varphi-2\ell+2k+\delta\geq2$. (The latter inequality is obvious when $\varphi>\ell$, whereas if $\varphi=\ell$, so that $\varepsilon=1$, the inequality is a consequence of (\ref{ultima}).) The required condition $i+2\leq j$ is less obvious now and we close the proof by providing the easy argumentation.

\smallskip
When $s>k$, condition $i+2\leq j$ amounts to having $s+\epsilon+1\leq2k+\delta$, which is directly given by (\ref{ultima}). When $s=k$, condition $i+2\leq j$ amounts to having
\begin{equation}\label{check}
\epsilon+2\leq2k+2\delta,
\end{equation}
which obviously holds if $k\geq2$. If $k=1$, then (\ref{check}) could only fail with $\epsilon=1$ and $\delta=0$, which is ruled out by (\ref{ultima}). Lastly, if $k=0$, then (\ref{ultima}) forces $\delta=1$ and $s=u=\epsilon=0$, which yields (\ref{check}).
\end{proof}


\end{document}